\newcommand{\eqdef}{\ensuremath{\stackrel{\mathrm{def}}{=}}}
\def\Cset{{\mathbb{C}}} 
\def\Nset{{\mathbb{N}}} 
\def\Rset{{\mathbb{R}}} 
\def\Zset{{\mathbb{Z}}} 
\def\boldchi{{\boldsymbol\chi}}
 \def\mcf{\mathcal{F}} 
\def\mcg{\mathcal{G}}
 \def\rmi{{\mathrm i}} \def\expe{{\mathrm e}}
\def\Floc{\mcf_{\mathrm{local}}}
\def\Fglo{\mcf_{\mathrm{global}}}
\newcommand{\ei}[1]{\expe^{\rmi #1}} \newcommand{\emi}[1]{\expe^{-\rmi #1}}
 \newcommand{\var}{\mathrm{var}}
\newcommand{\cov}{\mathrm{cov}} \newcommand{\eq}[1]{(\ref{eq:#1})}
\newcommand{\prob}{\mathbb{P}} 
\def\esp{{\mathbb{E}}}
\def\ind{{\bf 1}_} 
  \theoremstyle{plain}
\newtheorem{theo}{Theorem}
\newtheorem{coro}[theo]{Corollary} \newtheorem{lem}[theo]{Lemma}
\newtheorem{prop}[theo]{Proposition}
\theoremstyle{remark} \newtheorem*{rem}{Remark}
\newcommand{\ba}{\begin{array}[1]} \newcommand{\ea}{\end{array}}
\newcommand{\limn}{\lim_{n \rightarrow \infty}}
\def\bnu{{\boldsymbol\nu}}  
 \def\bC{{\mathbf C}} \def\bF{{\mathbf F}} \def\bI{{\mathbf
I}}  \def\bS{{\mathbf S}} 
\def\bU{{\mathbf U}} \def\bV{{\mathbf V}} \def\bW{{\mathbf W}}  \def\bY{{\mathbf Y}}    \def\bk{{\mathbf k}}
\def\bt{{\mathbf t}}   \def\bx{{\mathbf
x}}  \def\bz{{\mathbf z}}
\def\rmi{{\mathrm i}}  \def\d{\,{\mathrm d}}
\numberwithin{equation}{section}
\newcounter{hyp}
\newenvironment{hyp}[1]{\refstepcounter{hyp}\begin{itemize}\item[{\bf
(A\arabic{hyp})}] \label{hyp:#1}}{\end{itemize}}
\newcommand{\refhyp}[1]{{\bf (A\ref{hyp:#1})}}
\newcounter{fms}
\newenvironment{fms}[1]{\refstepcounter{fms}\begin{itemize}\item[{\bf
(B\arabic{fms})}] \label{fms:#1}}{\end{itemize}}
\newcommand{\reffms}[1]{{\bf (B\ref{fms:#1})}}
\begin{document}

\title{Moment bounds for non-linear functionals of the periodogram}
\date{\today} 
\author{Gilles Fa\"y}
\address{Laboratoire Paul-Painlevé, Université Lille-1, 59655 Villeneuve-d'Ascq
  Cedex, France.}
\curraddr{APC, Université Paris-7, 10, rue Alice Domon et
  Léonie Duquet, 75205 Paris Cedex 13, France}
\email{gilles.fay@univ-lille1.fr}
\thanks{I am very grateful and indebted to Eric Moulines and Philippe Soulier for their help
  and the many
  fruitful discussions we had on this subject some years ago.}

\begin{abstract} In this paper, we prove the validity of the Edgeworth
expansion of the Discrete Fourier transforms of some linear time series. This
result is applied to approach moments of non linear functionals of the
periodogram.  As an illustration, we give an expression of the mean square
error of the Geweke and Porter-Hudak estimator of the long memory parameter. We prove that this
estimator is rate optimal, extending the result of
\cite{giraitis:robinson:samarov:1997} from Gaussian to linear processes.
\end{abstract}

\maketitle

\noindent Keywords : linear processes, discrete Fou\-rier transform,
periodogram, long range dependence,  Geweke and Porter-Hudak (GPH) estimator.

\section{Introduction} Many estimators in time series analysis involve
non-linear functionals of the periodogram.  Examples include the estimation of
the innovation variance \citep{chen:hannan:1980, lee:cho:kim:park:1995,
  deo:chen:2000, ginovian:2003}, log-periodogram regression
\citep{taniguchi:1979,taniguchi:1991,shimotsu:phillips:2002}, robust
non-parametric estimation of the spectral density
\citep{vonsachs:1994,janas:vonsachs:1995}. Non-linear functionals of the
periodogram also play a predominant role in the analysis of long-memory
time-series: one of the much widely used estimator of the memory parameter is
based on the regression of the log-periodogram ordinates on the log-frequency
\citep*[see also \citealt*{robinson:1995a,moulines:soulier:1999}]%
{geweke:porter-hudak:1983}.

The statistical analysis of such functionals has proved to be a very
challenging problem, due to the intricate dependence structure of periodogram
ordinates. The first attempts to study these statistics were made under the
additional assumption that the underlying process is Gaussian.  Because the
Fourier transform coefficients are in this case also Gaussian, one may then
apply results on non-linear transforms of Gaussian random variables; see for
example \cite{taqqu:1977}, \cite{taniguchi:1980} and \cite{arcones:1994}.

These techniques do not extent to non-Gaussian processes.  A first step to
weaken this assumption was taken by \cite{chen:hannan:1980} who proved the
consistency of an additive functional of the log-periodogram of a linear
stationary process, with an application to the estimation of the innovation
variance.  These techniques were based on the so-called \cite{bartlett:1955}
expansion; this technique was later improved by
\citet*{fay:moulines:soulier:2002} who proved a central limit theorems for
these functionals. It used by\cite{velasco:2000} to establish the weak
consistency of the log-periodogram regression estimate of the long memory
parameter for long range dependent linear time series. Edgeworth expansions are
used to estimates moments of the functional of the unobservable periodogram of
the innovation sequence. Remainder terms can be bounded in probability. The
Bartlett expansion is indeed useful to establish limit theorems but does not in
general allow to determine the moments of these functionals.

An alternative approach has been considered by
\cite{vonsachs:1994,janas:vonsachs:1995}. These authors prove the mean-square
consistency of general additive functional of non-linear transforms of the
(tapered) periodogram, using Edgeworth expansions of the discrete Fourier
transform of the observed time series itself. \cite{janas:vonsachs:1995} apply
these results to prove the mean-square consistency of an Huberized (peak
insensitive) non-parametric spectral estimator. 
These results rely on the Edgeworth expansion of a triangular array of strongly
mixing process with geometrically mixing coefficient established by
\cite{gotze:hipp:1983}. The mixing conditions herein are rather stringent, and
thus the conclusions reached by \cite{janas:vonsachs:1995} are proved under a
set of restrictive assumptions, precluding for instance their use in a
long-memory context.

The main objective of this paper is to develop a method allowing to compute the
moments of functionals of non-linear transforms of the (possibly tapered)
periodogram of a \emph{linear} process. These results are based on Edgeworth
expansion of a (possibly infinite) triangular array of i.i.d. random variables
obtained earlier in \cite{fay:moulines:soulier:2004} and recalled in
Appendix~\ref{sec:edegw-expans-triang}. The linearity of the process is then
crucial. Our results cover both short-memory and long-memory processes.

The remaining of the paper is organized as follows.  In
Section~\ref{sec:notat-assumpt} we give the assumptions on the linear structure
of the time series and define the cumbersome notations related to Edgeworth
expansions.
In Section~\ref{sec:momentboundsshort}, we formulate the validity of Edgeworth
expansions and moment bounds under short memory set of hypotheses. As an
application, we derive the mean-square consistency of additive functionals of
non-linear transform of the periodogram for a short-memory linear time-series.
In Section~\ref{sec:momentboundslong}, we follow the same lines but in a
long-range dependence framework, and apply the moment bounds we obtained to
control the mean-square error of the \cite{geweke:porter-hudak:1983} estimator
of the fractional difference parameter for a non-Gaussian linear long-memory
process. This extends the rate optimality property of the Geweke and
Porter-Hudak (hereafter, GPH) estimator obtained earlier by
\citet*{giraitis:robinson:samarov:1997} for Gaussian processes. A small
Monte-Carlo experiment is run to confirm our results for finite-sample
observations.
Proofs are postponed to the appendices.

\section{Notations and assumptions}
\label{sec:notat-assumpt} Assume that $X= (X_t)_{t \in \Zset}$ is a covariance
stationary process that have a spectral density $f$.  For any integer $r \geq
0$, we define the tapered discrete Fourier transform (DFT) and periodogram of
order $r$ as
\begin{equation}
\label{eq:hurvichtaper} d_{r,n}(\lambda) \eqdef (2\pi n a_r)^{-1/2}
\sum_{t=1}^n h_{t,n}^r X_t \ei{t\lambda}, \ \ I_{r,n}(\lambda) \eqdef |
d_{r,n}(\lambda)|^2
\end{equation} where $h_{t,n} \eqdef 1 - \expe^{2 \rmi \pi t / n}$ is the data
taper introduced in \cite{hurvich:chen:2000} and $a_r\eqdef n^{-1} \sum_{t=1}^n
| h_{t,n}|^{2r} = \binom{2r}{r}$ is a normalization factor. Denote $ d_{r,n,k}
= d_{r,n}(\lambda_k)$ and $I_{r,n,k} = I_{r,n}(\lambda_k)$ the tapered DFT and
tapered periodogram evaluated at the Fourier frequencies $\lambda_k \eqdef
\frac{2\pi k}{n}$ , $k=1,\dots,[(n-1)/2]$.  Define for $r \in \Nset$,
$D_{r,n}(\lambda)$ the normalized kernel function
\begin{equation}
\label{eq:form:kernel} D_{r,n}(\lambda) \eqdef (n a_r)^{-1/2}
\sum_{t=1}^n h_{t,n}^r \exp(i t \lambda) = ( n a_r)^{-1/2} \sum_{k=0}^{r}
\binom{r}{k} (-1)^k D_{n}(\lambda+\lambda_k)
\end{equation} where $D_{n}(\lambda) \eqdef \sum_{t=1}^n \emi{\lambda t}$
denotes the non-symmetric Dirichlet kernel. The latter relation implies that
$D_{r,n}(\lambda_k)= 0$ for $k \in \{ 1, \cdots, \tilde n \}$, with $\tilde n
\eqdef \lfloor(n-2r-1)/2\rfloor$, so that the tapered Fourier transform is
invariant to shift in the mean.  As shown in \cite{hurvich:chen:2000}, the
decay rate of the kernel in the frequency domain increases with the kernel
order, namely
\begin{equation}
\label{eq:decaytaper}
   \forall \lambda \in [-3\pi/2,3\pi/2]\ , \
|D_{r,n}(\lambda)| \leq \frac{C n^{1/2}}{(1+n|
\lambda|)^{r+1}}
\end{equation}
This property means that higher order kernels are more effective to
control frequency leakage.  If $X$ is a white noise and $r=0$, the DFT
ordinates at different Fourier frequencies are uncorrelated.  This property is
 lost by tapering.  More precisely, for $1 \leq k \ne j \leq
\tilde{n}$, $ \esp[ d_{r,n,k} d_{r,n,j}] = 0$, and $\esp[d_{r,n,k}
\overline{d_{r,n,j}} ] \eqdef (2\pi)^{-1}\varsigma_r(k-j)$ where $\bar z$
denotes the complex conjugate of $z$ and $\varsigma_r$ defined
in~(\ref{eq:defvarsigma}).

Many statistical applications (see the references given in the Introduction)
require to study weighted sums of non-linear functionals of the periodogram
ordinates
\begin{align}
\label{eq:defTnrenorm} T_n(X,\phi) = \sum_{k=1}^{K} \beta_{n,k} \phi
\left(\frac{I_{r,n,k}}{f(\lambda_k)}\right) , 
\end{align} where $(\beta_{n,k})_{k \in \{1, \dotsc, K \}}$ is a triangular
array of real numbers.  If $X$ is a Gaussian white noise, then $(I_{r,n,k})$
are i.i.d and the moments of the sum $T_n(X,\phi)$ can be calculated
explicitly.  In any other case, the random variables $( I_{r,n,k})_{k \in \{1,
  \dotsc, K\}}$ are not independent, and the calculation of the moments of
$T_n(X,\phi)$ is a difficult problem.  The only attempt to solve it has been
made by \cite{janas:vonsachs:1995}, who proposed a technique to compute moment
of order 1 and 2. As already outlined, their results are based on mixing
conditions, precluding their use for long-memory processes.
\begin{rem}
  Sometimes the periodogram ordinates are averaged along blocks of adjacent
  frequencies. This technique is known as \emph{pooling} and is appropriate to
  reduce asymptotic variance of the estimators of non linear functionals of the
  periodogram (see \citealp*{robinson:1995a,robinson95b}). For simplicity, we
  will not present any explicit result or application with the pooled
  periodogram, but the Edgeworth expansion results that follow allow to derive
  moment bounds on functionals of tapered \emph{and} pooled periodogram as
  well.
\end{rem}

In this contribution, we focus on non-Gaussian strict sense linear processes,
\textit{i.e.}  it is assumed that
\begin{align}
\label{eq:Xlinear} X_t = \sum_{j\in\Zset} \psi_j Z_{t-j}, \quad \sum_{j \in
\Zset} \psi_j^2 < \infty \; ,
\end{align} where $(Z_j)_{j \in \Zset}$ is a sequence of i.i.d random variables
such that $\esp[Z_1]=0$, $\esp[Z_1^2] = 1$.  In addition, for some $s \geq 3$,
$p\geq1$ and $p'\geq 0$,
\begin{hyp}{Lp} $\esp[|Z_1|^{s}]<\infty$ and $\int_\Rset|t|^{p'}\ |\esp[\ei{t
Z_1}]|^{p}\d t < \infty$ .
\end{hyp}
\begin{rem}
  Apart from a classical moment condition, \refhyp{Lp} suppose that the
  distribution of the i.i.d. noise is smooth; for example, lattice
  distributions are forbidden. This condition is stronger than the usual
  Cram\'er condition. It ensures that the distributions of the Fourier coefficients
  of $Z$ are eventually continuous. We need this continuity to bound moments of
  singular functionals of the periodogram. Note that this condition
  could be dispensed with, were we concerned with smooth functionals.
\end{rem}

Define $\psi(\lambda) = \sum_{j \in \Zset} \psi_j \ei{j \lambda}$ (the
convergence holds in $\mathbb{L}^2([-\pi,\pi],\d x)$) the transfer function of the
linear filter $(\psi_j)_{j \in \Zset}$ and $f(\lambda) = (2 \pi)^{-1}
|\psi(\lambda)|^2$ the spectral density of the process $X$.  For an integer $k \in
\{1,\cdots,\tilde n\}$ such that $f(\lambda_k) \neq 0$, define the normalized
DFT $ \omega_{r,n,k} \eqdef \sqrt{2 \pi} \; d_{r,n,k}/ |\psi(\lambda_k)|$.  Let
$k_1 < k_2 < \ldots < k_u $ be an ordered $u$-tuple of such integers in the range
$1,\ldots,\tilde n$ and write $\bk=(k_1,\ldots,k_u)$.  Define (the
reference to $r$ is suppressed in the notation)
\begin{equation}\label{eq:defsnk} \bS_{n}(\bk) \eqdef \left[ \mathrm{Re}
(\omega_{r,n,k_1}), \mathrm{Im} (\omega_{r,n,k_1}), \cdots, \mathrm{Re}
(\omega_{r,n,k_u}), \mathrm{Im} (\omega_{r,n,k_u}) \right] .
\end{equation} With those definitions,
\begin{equation}
  \label{eq:relationSnIn}
  I_{n,k,r}= f(\lambda_k) |\omega_{r,n,k}|^2 = f(\lambda_k) \|\bS_n(k)\|^2 \; .
\end{equation}
Since $X$ admits the linear representation \eq{Xlinear},
$\bS_n(\bk)$ can be further expressed as a $2u$-dimensional infinite triangular array
in the variables $(Z_t)_{t \in \Zset}$. Precisely
\begin{equation}
\label{eq:bSn} \bS_{n}(\bk) = \sum_{j \in \Zset} \bU_{n,j}(\bk) Z_j,
\end{equation} with
\begin{align} \bU_{n,j}(\bk) & \eqdef (n a_r)^{-1/2}
\bF_n^{-1}(\bk)
\sum_{t=1}^{n} \psi_{t-j} \bC_{n,t}(\bk), \label{eq:defUnj} 
\\ \bC_{n,t}(\bk)
& \eqdef \sum_{p=0}^r (-1)^p \binom{r}{p} \Bigl(
\cos(t\lambda_{k_1+p}),\sin(t\lambda_{k_1+p}), \dots, \cos(t\lambda_{k_u+p}),
\sin(t\lambda_{k_u+p})\Bigr)' \nonumber \\ \text{and} \quad 
\bF_n(\bk) & \eqdef \mathrm{diag}\bigl(|\psi(\lambda_{k_1})|, |\psi(\lambda_{k_1})| , \dots,
|\psi(\lambda_{k_u})|, |\psi(\lambda_{k_u})|\bigr) \; . \nonumber 
\end{align} To formulate our results, some notations related to Edgeworth
expansions are required, which we take from the monograph of
\citet*{bhattacharya:rao:1976}.  For $u$ a positive integer,
$\bnu=(\nu_1,\ldots,\nu_u)\in\Nset^u$ and $\bz=(z_1,\ldots,z_u)\in\Cset^u$,
denote $|\bnu| = \sum_{i=1}^u \nu_i$, $\bnu!  = \nu_1! \nu_2! \cdots \nu_u!$
and $\bz^\bnu=z_1^{\nu_1}z_2^{\nu_2}\cdots z_u^{\nu_u}$.  If $1 \leq |\bnu|
\leq s$, denote $\chi_{n,\bnu}(\bk)$ the cumulants of $\bS_n(\bk)$. Then $
\chi_{n,\bnu}(\bk) = \kappa_{|\bnu|} \sum_{j \in \Zset} \bU_{n,j}^\bnu(\bk) $
where $\kappa_r$ denotes the $r$-th cumulant of $Z_1$, $r \leq s$.  Let
$\bV_n(\bk) \eqdef \cov[\bS_n(\bk)] = \sum_{j\in\Zset}
\bU_{n,j}(\bk)\bU'_{n,j}(\bk).$ Let $\boldchi = \{\chi_{\bnu}; \ \bnu\in\Nset^u
\}$ be a set of real numbers. For any integer $r\geq2$ and $\bz\in\Cset^u$,
define $\chi_{r}(\bz) \eqdef r!\sum_{|\bnu|=r} \frac{\chi_{\bnu} \;
\bz^\bnu}{\bnu!}$.  The polynomials $\tilde{P}_r(\bz, \boldchi )$ are formally
defined for $r\geq1$ by the identities
\begin{equation*} 1 + \sum_{r=1}^{\infty} \tilde{P}_r(\bz,\boldchi) t^r = \exp
\Bigl \{\sum_{r=3}^{\infty} \frac{\chi_{r}(\bz)}{r!} t^{r-2} \Bigr \} = 1 +
\sum_{m=1}^\infty \frac{1}{m!} \Bigl ( \sum_{r=3}^{\infty}
\frac{\chi_{r}(\bz)}{r!} t^{r-2} \Bigr)^m,
\end{equation*} and we set $\tilde{P}_0 \equiv 0$.  Denote $\varphi_\bV$ the
density of a Gaussian r.v in $\Rset^u$ with zero mean  and non-singular
covariance matrix $\bV$. 
Define $P_r: \Rset^u \mapsto \Rset$ by $ P_r(\bx,\bV,\boldchi) =
\left[\tilde{P}_r(-D,\boldchi)\right] \varphi_\bV(\bx) $ where, for any
polynomial $P(\bz) = \sum_{\bnu} a_{\bnu} \bz^{\bnu}$, $P(-D)$ is interpreted
as a polynomial in the differentiation operator $D$, $P(-D) = \sum_{\bnu}
a_\bnu (-1)^{|\bnu|}D^{\bnu}$, with $ D^\bnu =
\frac{\partial^{|\bnu|}}{\partial x_1^{\nu_1}\ldots \partial x_u^{\nu_u}}$,
$\bnu = (\nu_1, \dots, \nu_u) \in \Nset^u$.  By construction $P_r$ and $\tilde
P_r$ do not depend on the coefficient $\chi_\bnu$ if $|\bnu| > r+2$, and
$\tilde{P}_r(\rmi \bt,\boldchi)\expe^{-\bt'\bV\bt/2}$ is the Fourier transform
of $P_r(\bx,\bV,\boldchi)$.
Let $\xi_\Gamma$ be a centered $a$-dimensional Gaussian vector with covariance
matrix $\Gamma$ and $g : \Rset^a \to \Rset$ a measurable mapping.  Define
$N_{s}(g) = \int_{\Rset^{a}} (1 + \|\bx\|^s)^{-1} |g(\bx)| \d\bx$ and
$\|g\|^2_\Gamma = \esp[g^2(\xi_\Gamma)]$.  The Hermite rank of
$g$, $\|g\|^2_\Gamma < \infty$, with respect
to $\Gamma$ is defined as the smallest integer $\tau$ such that there exists a
polynomial $P$ of degree $\tau$ with $\esp[g(\xi_\Gamma)P(\xi_\Gamma)] \neq 0$.
We denote $\tau(g,\Gamma)$ the (positive) Hermite rank of $g -
\esp[g(\xi_\Gamma)]$ with respect to $\Gamma$. 

\section{Moment bounds: short memory case}
\label{sec:momentboundsshort}
In this section we consider  short-range dependent
processes.  For any reals $\alpha,\delta > 0$ and $\beta < \infty$, denote by $\mathcal
G(\alpha,\beta,\delta)$ the set of real sequences $(\psi_j)_{j \in \Zset}$ such that
\begin{align}
  \label{eq:weakdependence}
  &| \psi_0 | + \sum_{j\in\Zset} |j|^{1/2+\delta}|\psi_j| \leq \beta \; ,\\
  \label{eq:spectraldensity}
  & \alpha \leq \inf_{\lambda \in [-\pi,\pi]} \bigl| \psi(\lambda) \bigr| \; .
\end{align}
\begin{theo}  \label{theo:edgdftshort}
Assume  \refhyp{Lp} with some integer $s \geq 3$, $p\geq 1$ and $p'=0$ and
assume that  $(\psi_j)_{j \in \Zset} \in \mathcal G(\alpha,\beta,\delta)$ for some $\alpha,\delta > 0$
  and $\beta < \infty$.  Then, there exists constants $C$ and $N$ (depending only on $s,p,\alpha,
  \beta,\delta, u$ and the distribution of $Z_0$) such that, for all $n \geq N$, and
  all $u$-tuple $\bk$ of distinct integers, the distribution of
  $\bS_{n}(\bk)$ has a density $q_{n,\bk}$ with respect to Lebesgue's measure
  on $\Rset^{2u}$ and
\begin{align}
  \sup_{\bx \in \Rset^{2u}} (1 + \|\bx\|^s)\bigl|q_{n,\bk}(\bx) -
  \sum_{r=0}^{s-3} P_r(\bx,\bV_n(\bk),\{\chi_{n,\bnu}(\bk)\}) \bigr| \leq
  Cn^{-(s-2)/2} \; .
  \label{eq:devedgshort}
\end{align}
\end{theo}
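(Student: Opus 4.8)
The strategy is to reduce the statement to the abstract Edgeworth expansion for infinite triangular arrays of i.i.d.\ random variables recalled in Appendix~\ref{sec:edegw-expans-triang}, applied to the representation $\bS_n(\bk) = \sum_{j\in\Zset} \bU_{n,j}(\bk) Z_j$ from \eq{bSn}. That abstract result will yield \eq{devedgshort} provided we can verify its hypotheses uniformly over $n\ge N$ and over all $u$-tuples $\bk$ of distinct integers in $\{1,\dots,\tilde n\}$, with constants depending only on $s,p,\alpha,\beta,\delta,u$ and the law of $Z_0$. So the bulk of the work is checking these conditions; the key quantities to control are the coefficient vectors $\bU_{n,j}(\bk)$ and the covariance $\bV_n(\bk)$.

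First I would establish the requisite bounds on the array $\{\bU_{n,j}(\bk)\}$. From \eq{defUnj}, $\bU_{n,j}(\bk) = (na_r)^{-1/2} \bF_n^{-1}(\bk)\sum_{t=1}^n \psi_{t-j}\bC_{n,t}(\bk)$; since $\|\bC_{n,t}(\bk)\| \le 2^{r}\sqrt{2u}$ (bounded entrywise by trigonometric functions summed against binomial coefficients) and $\|\bF_n^{-1}(\bk)\| \le \alpha^{-1}$ by \eq{spectraldensity}, the summability $\sum_j |j|^{1/2+\delta}|\psi_j| \le \beta$ of \eq{weakdependence} gives a uniform bound $\sum_{j\in\Zset}\|\bU_{n,j}(\bk)\|^2 \le C$ and indeed $\sum_{j\in\Zset}(1+|j|)^{1/2+\delta}\|\bU_{n,j}(\bk)\| \le C'$, with $C,C'$ depending only on $r,u,\alpha,\beta$. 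This provides the Lindeberg-type negligibility of individual summands and the decay of tail contributions needed for the abstract theorem, as well as the existence and uniform boundedness of all cumulants $\chi_{n,\bnu}(\bk) = \kappa_{|\bnu|}\sum_j \bU_{n,j}^\bnu(\bk)$ for $|\bnu|\le s$, using $\esp|Z_1|^s<\infty$ from \refhyp{Lp}.

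The main obstacle is the lower bound on the covariance matrix, i.e.\ showing $\bV_n(\bk) \ge c\, \bI_{2u}$ uniformly in $n$ and $\bk$ for some $c>0$ depending only on the listed parameters — this non-degeneracy is what allows one to pass from the characteristic-function expansion to a genuine density expansion (the $\varphi_{\bV}$ in $P_r$ must be a legitimate Gaussian density). One computes $\bV_n(\bk) = \cov[\bS_n(\bk)]$ and, because $\bS_n$ is essentially the normalized tapered DFT, the diagonal $2\times2$ blocks are close to $\tfrac12 \bI_2$ while off-diagonal blocks involve $\varsigma_r(k_i-k_j)$, which decay in $|k_i-k_j|$ thanks to the kernel decay \eq{decaytaper}; the smoothness $|\psi(\lambda)|$ varies slowly enough (by \eq{weakdependence}, $\psi$ is Hölder-$(1/2+\delta)$, hence the normalization $\bF_n^{-1}$ does not destroy the structure) that $\bV_n(\bk)$ is a uniformly bounded perturbation of a block matrix with dominant diagonal, giving the eigenvalue bound. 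Combined with an upper bound $\bV_n(\bk)\le C\bI_{2u}$ from the previous paragraph, one gets $cI \le \bV_n(\bk) \le CI$ uniformly.

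Finally, the smoothness/Cramér-type condition of the abstract theorem must be checked: here is where the hypothesis $\int_\Rset |\esp[\ei{tZ_1}]|^p\,\dd t < \infty$ (with $p'=0$) enters. The characteristic function of $\bS_n(\bk)$ factorizes as $\prod_j \esp\ei{\,\bt'\bU_{n,j}(\bk)Z_j}$, and one must show $|\esp\ei{\bt'\bS_n(\bk)}|$ is integrable over $\bt\in\Rset^{2u}$ with a uniform bound, after which the abstract result delivers both existence of the density $q_{n,\bk}$ and the expansion \eq{devedgshort}. The argument is the standard one: for $\bt$ in a bounded region the Gaussian approximation (via the lower covariance bound) controls the integral, while for large $\bt$ one exploits that a positive fraction of the indices $j$ carry a nondegenerate component of $\bU_{n,j}(\bk)$ in the direction $\bt$, so enough factors $|\esp\ei{s_j Z_j}|$ are small; summing the $L^p$ assumption over these indices and using that $\sum_j \|\bU_{n,j}\|^2$ is bounded below (which follows from the covariance lower bound) yields the uniform integrability. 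Assembling the three verifications and invoking Appendix~\ref{sec:edegw-expans-triang} completes the proof.
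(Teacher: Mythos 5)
Your overall strategy coincides with the paper's: represent $\bS_n(\bk)$ as the infinite triangular array \eqref{eq:bSn} and check the hypotheses of the appendix theorem (Theorem~\ref{theo:devedgta}) uniformly in $\bk$ and $\psi\in\mathcal G(\alpha,\beta,\delta)$. However, your treatment of the central difficulty --- the uniform lower bound on $v_{\min}[\bV_n(\bk)]$ --- contains a genuine gap. You argue that $\bV_n(\bk)$ is a perturbation of a ``block matrix with dominant diagonal'' because the off-diagonal entries $\tfrac12\varsigma_r(k_i-k_j)$ ``decay in $|k_i-k_j|$''. But for a tapered DFT ($r\geq1$) these entries are not small for adjacent frequencies: $\varsigma_r(1)=-r/(r+1)$, and for $|l|>r$ they vanish exactly, so there is no useful decay inside a block of nearby frequencies. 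Diagonal dominance is already borderline for $r=1$, $u\geq3$ consecutive frequencies and fails outright for $r\geq2$ (the off-diagonal row sum $\tfrac12\sum_{0<|l|\leq r}|\varsigma_r(l)|$ exceeds the diagonal $\tfrac12$). So your route to the eigenvalue bound collapses precisely in the regime the theorem must cover (arbitrary $u$-tuples of distinct, possibly adjacent, integers). The paper's Lemma~\ref{lem:algebrique} handles this differently: it identifies $\bV(\bk)$ as the covariance of tapered sine/cosine transforms of a Gaussian white noise, proves invertibility by a linear-independence iteration (the coefficients of $c_{0,n,k_u+r}$, $s_{0,n,k_u+r}$ must vanish, then recurse), obtains uniformity by observing that $v_{\min}[\bV(\bk)]$ depends only on the gap pattern and takes finitely many values when all gaps are $\leq r$, and treats well-separated frequencies through the block-diagonal structure and the determinant inequality $v_{\min}\,v_{\max}^{2u-1}\geq\det\bV(\bk)$. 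Some such argument is indispensable; without it the expansion \eqref{eq:devedgshort} (whose $P_r$ terms require a nonsingular $\bV_n(\bk)$, uniformly) is not established.

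Two secondary points. First, your claim $\sum_{j\in\Zset}(1+|j|)^{1/2+\delta}\|\bU_{n,j}(\bk)\|\leq C'$ uniformly in $n$ is false (already for white noise, $\sum_j\|\bU_{n,j}\|\asymp n^{1/2}$); what is actually needed and true is $\sup_j\|\bU_{n,j}\|\leq Cn^{-1/2}$, concentration of the $\ell^2$ mass on $J_n=\{|j|\leq 2n\}$ (this is where $\sum_j|j|^{1/2+\delta}|\psi_j|\leq\beta$ enters, via $\sum_{|j|\geq n}|j|\psi_j^2\leq\beta^2 n^{-2\delta}$), and $\sum_j\|\bU_{n,j}\|\leq Cn^{1/2}\leq CM_n^{-1}$, which is exactly \reffms{edg:l1} with $\zeta=1$. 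Second, since $p'=0$, the hypothesis of Theorem~\ref{theo:devedgta} to be verified is precisely \reffms{edg:l1}; re-deriving integrability of the characteristic function of $\bS_n(\bk)$ by hand, as you sketch, is both unnecessary and much harder than the one-line check above (that integrability is the content of the cited theorem's proof, not a hypothesis you must supply). Finally, note that the appendix theorem bounds the error by $C\sum_j\|\bU_{n,j}\|^s$, so one still needs $\sum_j\|\bU_{n,j}\|^s\leq M_n^{s-2}\,\mathrm{trace}[\bV_n(\bk)]\leq Cn^{-(s-2)/2}$ to reach \eqref{eq:devedgshort}; this step is missing from your write-up.
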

Several interesting consequences can be derived from this result. A
straightforward integration of the expansion~(\ref{eq:devedgshort}) yields the
following corollary which gives an Edgeworth expansion of some moment $
\esp[g(\bS_n(\bk))]$ around the centered Gaussian distribution with covariance
matrix $\bV_n(\bk)$.
\begin{coro}
\label{coro:boundmom1}
There exists a constant $C$ and an integer $N$ (depending only on
$s,p,\alpha,\beta,\delta, u$ and the distribution of $Z_0$) such that, for any
$u$-tuple of distinct integers $\bk$, $n \geq N$ and measurable function $g$
satisfying $N_{s}(g) < \infty$,
  \begin{align}
    \left| \esp[g(\bS_n(\bk))] - \sum_{r=0}^{s-3} \int_{\Rset^{2u}} g(\bx)
      P_r(\bx,\bV_n(\bk),\{\chi_{n,\bnu}(\bk)\}) \d\bx \right| \leq C \; N_s(g)
    \; n^{-(s-2)/2}.
  \end{align}
\end{coro}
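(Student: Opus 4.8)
The plan is to integrate the pointwise Edgeworth bound \eqref{eq:devedgshort} against $g$. First I would use that, for $n \geq N$, Theorem~\ref{theo:edgdftshort} guarantees that $\bS_n(\bk)$ has a density $q_{n,\bk}$, so that
\[
\esp[g(\bS_n(\bk))] - \sum_{r=0}^{s-3} \int_{\Rset^{2u}} g(\bx)\, P_r(\bx,\bV_n(\bk),\{\chi_{n,\bnu}(\bk)\})\d\bx
= \int_{\Rset^{2u}} g(\bx)\Bigl( q_{n,\bk}(\bx) - \sum_{r=0}^{s-3} P_r(\bx,\bV_n(\bk),\{\chi_{n,\bnu}(\bk)\}) \Bigr)\d\bx .
\]
Taking absolute values, bounding the integrand pointwise via \eqref{eq:devedgshort} by $|g(\bx)|\cdot C n^{-(s-2)/2}(1+\|\bx\|^s)^{-1}$, and integrating gives
\[
\Bigl| \esp[g(\bS_n(\bk))] - \sum_{r=0}^{s-3} \int_{\Rset^{2u}} g(\bx)\, P_r(\bx,\bV_n(\bk),\{\chi_{n,\bnu}(\bk)\})\d\bx \Bigr| \leq C\, n^{-(s-2)/2} \int_{\Rset^{2u}} \frac{|g(\bx)|}{1+\|\bx\|^s}\d\bx = C\, n^{-(s-2)/2}\, N_s(g),
\]
which is precisely the asserted inequality, the ambient dimension in the definition of $N_s$ being $a = 2u$.

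The only point deserving care is the legitimacy of the splitting above, namely that the terms involved are separately finite so the difference of integrands may be integrated termwise. To see this I would combine \eqref{eq:devedgshort} with the observation that each $P_r(\cdot,\bV_n(\bk),\{\chi_{n,\bnu}(\bk)\})$ is a polynomial multiplied by the Gaussian density $\varphi_{\bV_n(\bk)}$, hence is bounded by $C(1+\|\bx\|^s)^{-1}$ uniformly in $n \geq N$ and in $\bk$. This uniformity holds for the same reason the constant $C$ in Theorem~\ref{theo:edgdftshort} is uniform: membership of $(\psi_j)_{j\in\Zset}$ in $\mathcal G(\alpha,\beta,\delta)$ keeps the covariances $\bV_n(\bk)$ and the cumulants $\chi_{n,\bnu}(\bk)$ inside a fixed compact set, with $\bV_n(\bk)$ bounded away from singularity. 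It follows that $q_{n,\bk}(\bx) \leq C(1+\|\bx\|^s)^{-1}$ uniformly, whence $\int |g|\,q_{n,\bk} \leq C\,N_s(g) < \infty$ (so that $g(\bS_n(\bk))$ is integrable) and likewise $\int |g|\,|P_r| \leq C\,N_s(g) < \infty$ for each $r$.

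I do not expect a genuine obstacle: the corollary is an immediate consequence of Theorem~\ref{theo:edgdftshort}, its entire content residing in the pointwise rate \eqref{eq:devedgshort}. The only things to watch are the bookkeeping just described --- the uniform envelope $q_{n,\bk}(\bx)\leq C(1+\|\bx\|^s)^{-1}$ and the finiteness of the individual terms --- and carrying the dimension $2u$ of $\bS_n(\bk)$ consistently through into $N_s$ and into the Edgeworth polynomials.
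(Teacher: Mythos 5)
Your proposal is correct and is exactly the paper's argument: the corollary is obtained by a straightforward integration of the pointwise expansion~(\ref{eq:devedgshort}) against $g$, with the bound $|g(\bx)|(1+\|\bx\|^s)^{-1}$ integrating to $N_s(g)$. The additional bookkeeping you supply (finiteness of each term, uniform envelope for $q_{n,\bk}$ and the $P_r$ terms) is sound and consistent with the uniformity already built into Theorem~\ref{theo:edgdftshort}.
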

One can also use Theorem~\ref{theo:edgdftshort} to develop the same moment
around the \emph{limiting} Gaussian distribution of $\bS_n$.  Recalling that
$\omega_{r,n,k} = a_r^{-1/2}\sum_{s=0}^r \binom{r}{s} (-1)^s \omega_{0,n,k+s}$,
we have \[\lim_{n \to \infty} \bV_n(\bk) = \bV(\bk)\] under short memory
conditions, where $\bV(\bk)$ is the $2u
\times 2u$ matrix defined component-wise by
\begin{gather}
\label{eq:defbvbk}  [\bV(\bk)]_{2i-1,2j-1} = [\bV(\bk)]_{2i,2j} = \frac 1 2 \varsigma_r(k_i - k_j)
  \; \; , \; \; [\bV(\bk)]_{2i-1,2j} = [\bV(\bk)]_{2i,2j-1} = 0 \; , 
\end{gather}
for $i,j=1,\cdots,u$, with
\begin{align}
\label{eq:defvarsigma}
\varsigma_r(l) \eqdef
\begin{cases}
  0 & \text{if} \ |l| > r \;, \\ a_r^{-1} (-1)^l \binom{2r}{r+l} & \text{if} \ 
  |l| \leq r \;.
\end{cases}
\end{align}
 Note that $\bV(\bk) = \tfrac 12 \bI_{2u}$ if $r=0$.
\begin{coro}
\label{coro:boundmomshort1}
There exists a constant $C$ and $N$ (depending only on $s,p,\alpha$,$\beta$,$\delta$,$u$
and the distribution of $Z_0$), such that for all measurable function $g$ on
$\Rset^{2u}$ such that $N_3(g) < \infty$, all $u$-tuple of distinct integers
$\bk$, and any $n \geq N$,
  \begin{equation}\label{eq:boundmomshort1}
    \left | \esp \left[ g(\bS_n(\bk)) \right] - \int_{\Rset^{2u}} g(\bx)
      \varphi_{\bV(\bk)}(\bx) \d \bx \right| \leq C \left\{ n^{-1/2} N_3(g) +
      n^{-\tau(g,\bV(\bk))/2} \; \| g \|_{\bV(\bk)} \right\}.
  \end{equation}
\end{coro}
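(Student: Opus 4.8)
The plan is to transfer the density expansion of Theorem~\ref{theo:edgdftshort}, specialised to $s=3$, from the Gaussian law with covariance $\bV_n(\bk)$ to the limiting law $\varphi_{\bV(\bk)}$, the two being compared through a Hermite expansion. Write $\bV=\bV(\bk)$, $\bV_n=\bV_n(\bk)$, $\Delta_n=\bV_n-\bV$, $\tau=\tau(g,\bV)$; since the right-hand side of~\eq{boundmomshort1} is infinite when $\|g\|_\bV=\infty$ we may assume $\|g\|_\bV<\infty$. As \refhyp{Lp} holds with some $s\geq3$ and with $p'=0$ it holds with $s=3$, and Theorem~\ref{theo:edgdftshort} with $s=3$ reads --- the sum in~\eq{devedgshort} reducing to the term $P_0(\cdot,\bV_n,\{\chi_{n,\bnu}(\bk)\})=\varphi_{\bV_n}$ --- as $|q_{n,\bk}(\bx)-\varphi_{\bV_n}(\bx)|\leq Cn^{-1/2}(1+\|\bx\|^3)^{-1}$ for $n\geq N$, uniformly in $\bx$ and $\bk$. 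Multiplying by $|g|$ and integrating gives $|\esp[g(\bS_n(\bk))]-\int_{\Rset^{2u}} g\,\varphi_{\bV_n}\,\d\bx|\leq Cn^{-1/2}N_3(g)$.

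Next I would establish, for $n\geq N$ and every $u$-tuple $\bk$ of distinct integers, that $\|\Delta_n\|\leq Cn^{-1/2}$ and $\bV\succeq c\bI_{2u}$ (hence $\bV_n\succeq\tfrac c2\bI_{2u}$), with $c,C,N$ depending only on the listed parameters. The rate comes from computing the second moments of the $\omega_{r,n,k_i}$, whose limits are the $\tfrac12\varsigma_r(k_i-k_j)$ recalled before~\eq{defbvbk}: writing each moment as an integral of $f=(2\pi)^{-1}|\psi|^2$ against the kernels $D_{r,n}$, the discrepancy is governed by the modulus of continuity of $\psi$ at scale $n^{-1}$ --- which~\eq{weakdependence} makes $O(n^{-1/2})$ --- plus the leakage terms, absorbed using~\eq{decaytaper}. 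The uniform positive lower bound for $\bV$ follows because $(\varsigma_r(\ell))_\ell$ is a positive-definite sequence, its symbol $a_r^{-1}(2-2\cos\lambda)^r$ (see~\eq{defvarsigma}) being nonnegative and vanishing only at $\lambda=0$; distinctness of the $k_i$ then makes $(\varsigma_r(k_i-k_j))_{i,j}$ strictly positive definite, the smallest eigenvalue being bounded below uniformly in $\bk$ since only finitely many patterns of gaps $\leq r$ occur and the other frequencies decouple, while $\alpha\leq\inf|\psi|$ handles the normalisation.

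It remains to bound $\int g\,(\varphi_{\bV_n}-\varphi_\bV)\,\d\bx$. Since $\int(\varphi_{\bV_n}-\varphi_\bV)=0$ we may replace $g$ by $\bar g=g-\esp[g(\xi_\bV)]$, which has Hermite rank $\tau$ for $\bV$ and obeys $\|\bar g\|_\bV\leq\|g\|_\bV$. Along the path $\bV_t=\bV+t\Delta_n$ one has $\partial_{\Sigma_{ij}}\varphi_\Sigma=\tfrac12\,\partial_{x_i}\partial_{x_j}\varphi_\Sigma$; at first order this gives $|\varphi_{\bV_n}(\bx)-\varphi_\bV(\bx)|\leq C\|\Delta_n\|(1+\|\bx\|^2)\varphi_{2\bV}(\bx)$ for $n\geq N$, hence, since $(1+\|\bx\|^5)\varphi_{2\bV}(\bx)$ is bounded, $|\int g\,(\varphi_{\bV_n}-\varphi_\bV)\,\d\bx|\leq C\|\Delta_n\|N_3(g)\leq Cn^{-1/2}N_3(g)$ --- so that, with the first step, the $n^{-1/2}N_3(g)$ term alone already bounds the whole left-hand side of~\eq{boundmomshort1}. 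Iterating the identity, $\varphi_{\bV_n}-\varphi_\bV=\sum_{m\geq1}\tfrac1{m!}\bigl(\tfrac12\sum_{i,j}(\Delta_n)_{ij}\partial_{x_i}\partial_{x_j}\bigr)^m\varphi_\bV$ up to a controlled remainder, the $m$-th summand being $O(\|\Delta_n\|^m)$ times a polynomial of degree $\leq 2m$ of Hermite type for $\bV$ (hence of zero $\varphi_\bV$-mean) times $\varphi_\bV$; integrating against $\bar g$, every summand with $2m<\tau$ vanishes by the Hermite-rank property, and the rest contribute $O(\|\Delta_n\|^{\lceil\tau/2\rceil}\|g\|_\bV)$, which, with the bound on $\|\Delta_n\|$ and the (harmless, for $n\geq N$) change of measure $\varphi_{\bV_t}\leftrightarrow\varphi_\bV$, accounts for the companion term $n^{-\tau(g,\bV(\bk))/2}\|g\|_{\bV(\bk)}$ in~\eq{boundmomshort1}.

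The hard part is the second step: the uniform rate $O(n^{-1/2})$ for $\|\bV_n(\bk)-\bV(\bk)\|$ and, above all, a spectral gap for $\bV(\bk)$ uniform over all $u$-tuples of distinct Fourier frequencies; granting these, the first and third steps are routine, the only delicate point being the justification of the termwise expansion and of the change of measure when $g$ is merely square-integrable for $\varphi_\bV$.
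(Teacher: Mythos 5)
Your proof follows the same architecture as the paper's: split by the triangle inequality into the Edgeworth error around $\varphi_{\bV_n(\bk)}$ (the paper invokes Corollary~\ref{coro:boundmom1} with $s=3$, which is exactly your "integrate Theorem~\ref{theo:edgdftshort} at $s=3$" step) and the Gaussian comparison $\varphi_{\bV_n(\bk)}$ versus $\varphi_{\bV(\bk)}$, controlled through the Hermite rank together with a uniform bound on $\bV_n(\bk)-\bV(\bk)$ and a uniform spectral gap for $\bV(\bk)$. The differences are in the sub-lemmas. For the spectral gap you argue via positive definiteness of the symbol $a_r^{-1}|1-\ei{\lambda}|^{2r}$ of the sequence $(\varsigma_r(\ell))$ in~(\ref{eq:defvarsigma}), plus the block decoupling and the finitely-many-gap-patterns argument; the paper's Lemma~\ref{lem:algebrique} reaches the same conclusion through a Gaussian white-noise representation and linear independence, so these are equivalent. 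For the Gaussian comparison you re-derive a Soulier-type bound by a heat-kernel/Hermite expansion of $\varphi_{\bV_n}-\varphi_{\bV}$, whereas the paper simply cites Lemma~\ref{lem:soulier} (adapted from Soulier's Theorem 2.1); again same spirit, and your flagged "delicate point" (termwise justification for $g$ merely in $L^2(\varphi_\bV)$) is precisely what that lemma encapsulates.

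One quantitative point deserves correction. You only claim $\|\bV_n(\bk)-\bV(\bk)\|=O(n^{-1/2})$, while the paper's~(\ref{eq:boundnormwnk}) (Hannan-type computation at Fourier frequencies) gives $O(n^{-1})$ uniformly in $\bk$. With your rate, your expansion yields a second term of order $n^{-\lceil\tau/2\rceil/2}\|g\|_{\bV(\bk)}$, which for $\tau\geq2$ is strictly weaker than the stated $n^{-\tau/2}\|g\|_{\bV(\bk)}$ (e.g.\ $n^{-1/2}$ versus $n^{-1}$ when $\tau=2$); so your assertion that the expansion "accounts for the companion term" is not accurate as written. The corollary as literally stated still follows from your argument, but only because, as you observe, the crude first-order bound already gives the whole left-hand side $\leq Cn^{-1/2}N_3(g)$, and that single term is part of the right-hand side of~(\ref{eq:boundmomshort1}). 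The paper's sharper input $\rho(\bV_n(\bk)-\bV(\bk))\leq Cn^{-1}$ combined with Lemma~\ref{lem:soulier} is what genuinely produces the $n^{-\tau/2}$ term, and that $O(n^{-1})$ rate is essential downstream (it drives the $O(n^{-1})$ bound of Corollary~\ref{coro:boundmomshort2}, where $\tau(g,\bI_{2u}/2)=2$), so if you intend your lemma on the covariance error to serve those later results you need to upgrade it to the $n^{-1}$ rate rather than the modulus-of-continuity bound you sketch.
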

For some functions $g$, it is possible to sharpen this result by considering
higher-order ($s > 3$) expansions  and approximating the terms appearing in
these expansions.  We shall consider mappings $g:\Rset^{2u}\to\Rset$ such that  
\begin{gather}
\label{eq:propG} 
  \begin{array}{rl}
&g(x_1,\dots,x_{2u}) = \prod_{j=1}^u  g_j(x_{2j-1},x_{2j})\\  
\text{with} & g_j(x,y) = g_j(y,x) = g_j(-x,y) , \; \; j=1,\dots,u.  
  \end{array}\end{gather} Recalling~(\ref{eq:relationSnIn}), products of
functionals of the periodogram are included in this particular case. Better
bounds are obtained by considering frequencies $k_1,\dots,k_u$ separated by
$r$, so that the asymptotic decorrelation is achieved, $\bV(\bk) = \tfrac 12
\bI_{2u}$ as in the $r=0$ case. Under those conditions, the $O(n^{-1/2})$ of
Corollary~\ref{coro:boundmomshort1} can ben improved to $O(n^{-1})$.
\begin{coro}
\label{coro:boundmomshort2}
Under the hypothesis that $s \geq 4$, there exists a constant $C$ and $N$
(depending only on $s,p,\alpha, \beta, \delta, u$ and the distribution of
$Z_0$), such that for all measurable function $g$ satisfying~(\ref{eq:propG})
and such that $N_q(g) < \infty$, all $u$-tuple of ordered integers $\bk$ such that $k_i
< k_{i+1} - r$, and any $n \geq N$,
  \begin{equation}
    \label{eq:boundmomshort2}
  \left | \esp \left[ g(\bS_n(\bk) \right] - \int_{\Rset^{2u}} g(\bx)
    \varphi_{\bI_{2u}/2}(\bx) \d \bx \right| \leq C \left\{  n^{-(s-2)/2}N_s(g) +
   n^{-1} \| (1+\|\bx\|^s)g(\bx)\|_{\bI_{2u}}  \right\}.
  \end{equation}
\end{coro}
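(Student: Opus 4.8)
The plan is to integrate the Edgeworth expansion of Theorem~\ref{theo:edgdftshort} up to the full order $s-3$ and then, exploiting the product/symmetry structure~\eqref{eq:propG} together with the separation $k_i<k_{i+1}-r$, to show that every polynomial correction term is either identically zero or of size $O(n^{-1})$, and that replacing the exact covariance $\bV_n(\bk)$ by its limit $\tfrac12\bI_{2u}$ costs only $O(n^{-1})$. Corollary~\ref{coro:boundmom1} already yields
\[
  \esp[g(\bS_n(\bk))]=\sum_{r=0}^{s-3}\int_{\Rset^{2u}}g(\bx)\,P_r\bigl(\bx,\bV_n(\bk),\{\chi_{n,\bnu}(\bk)\}\bigr)\d\bx+O\bigl(n^{-(s-2)/2}N_s(g)\bigr),
\]
which already produces the first term on the right of~\eqref{eq:boundmomshort2}; it remains to analyse the finite sum. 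Recall $P_r(\bx,\bV,\{\chi\})=\tilde P_r(-D,\{\chi\})\varphi_\bV(\bx)$, with $P_0(\cdot,\bV,\cdot)=\varphi_\bV$.

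Three ingredients are needed. (i) A uniform cumulant bound: from~\eqref{eq:defUnj}, \eqref{eq:weakdependence} and~\eqref{eq:spectraldensity} one gets $|\chi_{n,\bnu}(\bk)|=|\kappa_{|\bnu|}|\,\bigl|\sum_{j\in\Zset}\bU_{n,j}^\bnu(\bk)\bigr|\le C|\kappa_{|\bnu|}|\,n^{1-|\bnu|/2}$ uniformly in $\bk$ (each entry of $\bU_{n,j}(\bk)$ is $O(n^{-1/2})$ and $\sum_{j}\sum_{t=1}^n|\psi_{t-j}|=O(n)$); hence $\chi_{n,\bnu}(\bk)$ is $O(n^{-1/2})$ for $|\bnu|=3$ and $O(n^{-1})$ for $|\bnu|\ge4$, so the coefficients of $\tilde P_r$ are $O(n^{-r/2})$. (ii) A uniform covariance estimate: using the separation $k_i<k_{i+1}-r$, which forces $\varsigma_r(k_i-k_j)=0$ and hence $\bV(\bk)=\tfrac12\bI_{2u}$ (cf.~\eqref{eq:defbvbk}--\eqref{eq:defvarsigma}), together with the kernel decay~\eqref{eq:decaytaper}, one proves $\|\bV_n(\bk)-\tfrac12\bI_{2u}\|\le Cn^{-1}$ uniformly over admissible $\bk$. (iii) A parity identity: if $\xi\sim\mathcal N(0,\tfrac12\bI_{2u})$ and $g$ has the form~\eqref{eq:propG}, then $\int g(\bx)(-D)^\bnu\varphi_{\tfrac12\bI_{2u}}(\bx)\d\bx=\esp\bigl[g(\xi)\prod_i H_{\nu_i}(\xi_i)\bigr]=0$ for every multi-index $\bnu$ with $|\bnu|$ odd, since the expectation factorises over the blocks $\{2j-1,2j\}$ and some block carries odd total degree, so one of the one-dimensional factors is an odd Hermite polynomial integrated against a function that is even in that coordinate (here $g_j(x,y)=g_j(-x,y)=g_j(y,x)$ forces $g_j$ to be even in each argument).

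Now fix $r\in\{0,\dots,s-3\}$ and write $\int g\,P_r(\cdot,\bV_n(\bk),\{\chi_{n,\bnu}(\bk)\})=\int g\,P_r(\cdot,\tfrac12\bI_{2u},\{\chi_{n,\bnu}(\bk)\})+\Delta_r$, where $\Delta_r$ is the covariance-change term. The polynomial $\tilde P_r$ is a sum of monomials each of degree congruent to $r$ modulo $2$ (a product of factors $\chi_m(\bz)$ with $\sum(m-2)=r$, each homogeneous of degree $m$), with degrees between $r+2$ and $3r$. Hence, for $r$ odd — in particular $r=1$ — the first term vanishes by~(iii); for $r=0$ it equals the target $\int g\,\varphi_{\tfrac12\bI_{2u}}$; and for even $r\ge2$ it is, by~(i) and Cauchy--Schwarz against $(1+\|\bx\|^s)^{-1}$ (the remaining Gaussian normalising integral being finite), bounded by $Cn^{-r/2}\|(1+\|\bx\|^s)g(\bx)\|_{\bI_{2u}}\le Cn^{-1}\|(1+\|\bx\|^s)g(\bx)\|_{\bI_{2u}}$. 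The term $\Delta_r$ is handled by interpolating the covariance along $\bV_\theta=\tfrac12\bI_{2u}+\theta(\bV_n(\bk)-\tfrac12\bI_{2u})$: differentiating in $\theta$ brings down a factor $(\bV_n(\bk)-\tfrac12\bI_{2u})_{ab}$ and a second-order derivative of $\varphi_{\bV_\theta}$, so $|\Delta_r|\le C\|\bV_n(\bk)-\tfrac12\bI_{2u}\|\,n^{-r/2}\|(1+\|\bx\|^s)g(\bx)\|_{\bI_{2u}}\le Cn^{-1}\|(1+\|\bx\|^s)g(\bx)\|_{\bI_{2u}}$ by~(ii) (the case $r=0$ being precisely $\int g\,\varphi_{\bV_n(\bk)}-\int g\,\varphi_{\tfrac12\bI_{2u}}$). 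Summing over the finitely many values of $r$ gives~\eqref{eq:boundmomshort2}. The hypothesis $s\ge4$ enters exactly here: it guarantees both that the Edgeworth remainder $n^{-(s-2)/2}$ is $O(n^{-1})$ and that the first correction $P_1$ is present in the expansion (where it is shown to be negligible); for $s=3$ one recovers only the $O(n^{-1/2})$ rate of Corollary~\ref{coro:boundmomshort1}.

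The parity computation, the cumulant bound and the covariance interpolation are routine. The main obstacle is the uniform covariance estimate in~(ii): it requires expressing $\esp[\omega_{r,n,k_i}\overline{\omega_{r,n,k_j}}]$ and $\esp[\omega_{r,n,k_i}\omega_{r,n,k_j}]$ as integrals of the spectral density against products of the kernels $D_{r,n}$, observing that the leading (``white-noise'') contributions reduce to sums of the type $\sum_{t=1}^n|h_{t,n}|^{2r}\ei{t\lambda_m}$ which vanish identically for the Fourier indices in play because the separation keeps $m$ away from the multiples of $n$, and then controlling the residual leakage uniformly in $\bk$ via~\eqref{eq:decaytaper} and the summability $\sum_j|j|^{1/2+\delta}|\psi_j|<\infty$.
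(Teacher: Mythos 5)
Your proposal is correct and follows essentially the same route as the paper: integrate the full Edgeworth expansion (Corollary~\ref{coro:boundmom1}), use the cumulant bound $|\chi_{n,\bnu}(\bk)|\leq C|\kappa_{|\bnu|}|n^{1-|\bnu|/2}$ and $\|\bV_n(\bk)-\tfrac12\bI_{2u}\|\leq Cn^{-1}$ (the paper's~(\ref{eq:bornecum}) and~(\ref{eq:boundnormwnk}) with $\bV(\bk)=\tfrac12\bI_{2u}$ under the separation $k_i<k_{i+1}-r$), and kill the $P_1$ term at covariance $\tfrac12\bI_{2u}$ by the parity forced by~(\ref{eq:propG}). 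The only deviation is that you replace the paper's two tools for the covariance change --- Lemma~\ref{lem:soulier} with Hermite rank $\tau(g,\bI_{2u}/2)=2$ for the Gaussian term, and the $\exp\{-\bx'\bF_n(\bk)\bx\}$ change of measure giving $A_3\leq Cn^{-2}$ --- by a single direct interpolation in the covariance, which yields only $O(n^{-3/2})$ for the $P_1$ contribution but is still $o(n^{-1})$ and hence sufficient.
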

The proofs of Corollaries~\ref{coro:boundmomshort1}
and~\ref{coro:boundmomshort2} are postponed to the
Appendix~\ref{sec:proofs-coroll}. 
\begin{rem}
Pushing to higher orders $s \geq 4$ in
Corollary~\ref{coro:boundmomshort2} is sometimes necessary to have $N_s(g) < \infty$ (see
the applications below). But it does not improve the $O(n^{-1})$ bound. 
\end{rem}
To illustrate the results above, we compute bounds for the mean-square error of
plug-in estimators of non-linear functionals of the spectral density $
\Lambda(f) = \int_0^{\pi} w( \lambda) G(f(\lambda ))\d \lambda $
 where $w$ is a function of bounded variation and $G$ is a function such
 that there exists a function $H$ satisfying, for any $x > 0$, $ \int_0^\infty
 |H(xv)| \expe^{-v} \d v < \infty$ and $\int_{v>0} H(xv)\expe^{-v}\d v = G(x)
 ,$ \textit{i.e.} $H$ is the inverse Laplace transform of the function $t \mapsto
 G(1/t)/t$.
We consider the following estimator
$$
\hat \Lambda_n = (\pi/\tilde n ) \sum_{k=1}^{\tilde n} w(\lambda_k)
H(I_{n,k})
$$
and put $\Lambda_n = (\pi/\tilde n ) \sum_{k=1}^{\tilde n} w(\lambda_k)
G(f(\lambda_k))$. Here, $r=0$ and $I_{n,k} \eqdef I_{0,n,k}$ is the ordinary
periodogram.  We assume that the approximation error $\Lambda_n - \Lambda$ is
neglectable in comparison with the mean-square error $\esp( \hat \Lambda_n -
\Lambda_n)^2$.  These functionals have been studied in \cite{taniguchi:1980} in
the Gaussian case and \cite{janas:vonsachs:1995} for non-Gaussian linear
process, under rather stringent assumptions \citep[see also][and the references
therein]{deo:chen:2000} .  The moment bounds we have established allow to
extend \citet*{janas:vonsachs:1995}'s result, by relaxing the conditions on the
dependence (from $|\psi_j| < C \rho^{|j|}$ for some $\rho \in (0,1)$ to
$\sum_{j\in\Zset} |j|^{1/2}|\psi_j| < \infty$).
\begin{prop}
  \label{prop:boundNFL}
  Let $(X_t)_{t\in \Zset}$ be sequence satisfying the assumptions of
  Theorem~\ref{theo:edgdftshort} with some $s \geq 4$.  Put $H_1(x_1,x_2) =
  H(x_1^2 + x_2^2)$, $H_2(x_1,x_2,x_3,x_4) = H_1(x_1,x_2)H_1(x_3,x_4)$ and
  assume that $N_3(H_1^2) < \infty$ and $N_5(H_2) < \infty$. Then,   uniformly in $f \in \mcg(\alpha,\beta,\delta)$
\begin{align*}
  \esp[(\hat\Lambda_n - \Lambda_n)^2] \leq Cn^{-1} \ .
\end{align*}
\end{prop}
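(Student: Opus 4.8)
The plan is to square out the error into a double sum over Fourier frequencies, to identify the Gaussian leading term of each summand via Theorem~\ref{theo:edgdftshort} and its corollaries, and to show that the \emph{off-diagonal} covariances are $O(n^{-1})$ — which is the crux, and the reason the sharper Corollary~\ref{coro:boundmomshort2} rather than Corollary~\ref{coro:boundmomshort1} must be used. First I would fix notation. Write $Y_k\eqdef H(I_{n,k})$, $c_k\eqdef G(f(\lambda_k))$, so that $\hat\Lambda_n-\Lambda_n=(\pi/\tilde n)\sum_{k=1}^{\tilde n}w(\lambda_k)(Y_k-c_k)$ and
\[
\esp[(\hat\Lambda_n-\Lambda_n)^{2}]=\Bigl(\frac{\pi}{\tilde n}\Bigr)^{2}\sum_{k,j=1}^{\tilde n}w(\lambda_k)w(\lambda_j)\,\esp[(Y_k-c_k)(Y_j-c_j)].
\]
Since $r=0$, \eqref{eq:relationSnIn} gives $I_{n,k}=f(\lambda_k)\|\bS_n(k)\|^{2}$, hence $Y_k=g^{(k)}(\bS_n(k))$ with $g^{(k)}(x_1,x_2)\eqdef H_1(\sqrt{f(\lambda_k)}\,x_1,\sqrt{f(\lambda_k)}\,x_2)$, and $Y_kY_j=g^{(k,j)}(\bS_n(k,j))$ with $g^{(k,j)}(\bx)=g^{(k)}(x_1,x_2)g^{(j)}(x_3,x_4)$. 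Because $H_1(x,y)=H(x^{2}+y^{2})$ is symmetric in $(x,y)$ and even in each argument, $g^{(k)}$ and $g^{(k,j)}$ belong to the class \eqref{eq:propG}. On $\mcg(\alpha,\beta,\delta)$ the spectral density is two-sidedly bounded, $\alpha^{2}/(2\pi)\le f(\lambda)\le\beta^{2}/(2\pi)$ (lower bound \eqref{eq:spectraldensity}; upper bound from \eqref{eq:weakdependence} via $|\psi(\lambda)|\le\sum_j|\psi_j|\le\beta$); therefore, after the change of variables $\bx\mapsto\sqrt{f(\lambda_k)}\,\bx$ and using that a centred Gaussian density times a fixed polynomial is $\le C(1+\|\bx\|^{3})^{-1}$ (with Cauchy--Schwarz and $\int_{\Rset^{2}}(1+\|\bx\|^{3})^{-1}\,\dd\bx<\infty$), all the quantities $N_s(g^{(k)})$, $N_s(g^{(k,j)})$, $\|(1+\|\bx\|^{s})g^{(k)}\|_{\bI_2}$, $\|(1+\|\bx\|^{s})g^{(k,j)}\|_{\bI_4}$, $N_s((g^{(k)}-c_k)^{2})$ and the Gaussian second moment of $g^{(k)}$ are bounded by fixed multiples of $N_3(H_1^{2})$ and $N_5(H_2)$, uniformly in $k,j$ and in $f\in\mcg(\alpha,\beta,\delta)$. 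I would record these reductions once and use them without further comment.

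Next I would pin down the leading terms. For $r=0$, \eqref{eq:defvarsigma} gives $\varsigma_0(l)=\ind{\{l=0\}}$, so by \eqref{eq:defbvbk} $\bV(k)=\tfrac12\bI_2$ and $\bV(k,j)=\tfrac12\bI_4$ for all distinct $k,j$. If $\xi$ is a centred Gaussian vector in $\Rset^{2}$ with covariance $\tfrac12\bI_2$, then $\|\xi\|^{2}$ is standard exponential, so by the defining property of $H$ (inverse Laplace transform of $t\mapsto G(1/t)/t$),
\[
\int_{\Rset^{2}}g^{(k)}\,\varphi_{\bV(k)}=\esp\bigl[H(f(\lambda_k)\|\xi\|^{2})\bigr]=\int_{0}^{\infty}H(f(\lambda_k)v)\,\expe^{-v}\,\dd v=c_k,
\]
and by the product structure $\int_{\Rset^{4}}g^{(k,j)}\,\varphi_{\bV(k,j)}=c_kc_j$; the same computation with $H^{2}$ shows $\int(g^{(k)})^{2}\varphi_{\bV(k)}=\int_0^\infty H(f(\lambda_k)v)^{2}\expe^{-v}\,\dd v\le C\,N_3(H_1^{2})$, whence also $|c_k|\le C$ by Cauchy--Schwarz.

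Now I would invoke the moment bounds. Corollary~\ref{coro:boundmomshort2} with $u=1$ (separation condition void) applied to $g^{(k)}$ gives $|\esp[Y_k]-c_k|\le Cn^{-1}$; Corollary~\ref{coro:boundmomshort2} with $u=2$ applied to $g^{(k,j)}$ — the ordering requirement $k_1<k_2-r$ reduces to $k<j$ since $r=0$, hence holds for every off-diagonal pair — gives, using $s\ge4$ so that $n^{-(s-2)/2}\le n^{-1}$ and $N_5(H_2)<\infty$ to bound the $N_s$-term,
\[
|\esp[Y_kY_j]-c_kc_j|\le C\bigl\{n^{-(s-2)/2}N_s(g^{(k,j)})+n^{-1}\|(1+\|\bx\|^{s})g^{(k,j)}\|_{\bI_4}\bigr\}\le Cn^{-1}.
\]
For the diagonal I would use the general Corollary~\ref{coro:boundmom1} (not Corollary~\ref{coro:boundmomshort1}, whose remainder carries a Gaussian $L^{2}$-norm of $(g^{(k)}-c_k)^{2}$, i.e.\ a \emph{fourth} Gaussian moment of $H_1$ that the hypotheses do not control): it requires only $N_s((g^{(k)}-c_k)^{2})<\infty$, and, bounding $\varphi_{\bV_n(k)}\le C(1+\|\bx\|^{3})^{-1}$ for $n$ large (recall $\bV_n(k)\to\tfrac12\bI_2$), it yields $\esp[(Y_k-c_k)^{2}]\le C$ uniformly. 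Writing, for $k\neq j$,
\[
\esp[(Y_k-c_k)(Y_j-c_j)]=\bigl(\esp[Y_kY_j]-c_kc_j\bigr)-c_k\bigl(\esp[Y_j]-c_j\bigr)-c_j\bigl(\esp[Y_k]-c_k\bigr),
\]
gives $|\esp[(Y_k-c_k)(Y_j-c_j)]|\le Cn^{-1}$. Since $w$ has bounded variation on $[0,\pi]$ it is bounded, and $\tilde n\asymp n$, so
\[
\esp[(\hat\Lambda_n-\Lambda_n)^{2}]\le\Bigl(\frac{\pi}{\tilde n}\Bigr)^{2}\|w\|_{\infty}^{2}\bigl(\tilde n\cdot C+\tilde n(\tilde n-1)\cdot Cn^{-1}\bigr)\le Cn^{-1},
\]
with constants uniform in $f\in\mcg(\alpha,\beta,\delta)$, which is the claim.

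The hard part is the off-diagonal estimate $|\esp[Y_kY_j]-c_kc_j|=O(n^{-1})$: Corollary~\ref{coro:boundmomshort1} alone would give only $O(n^{-1/2})$, which summed over the $\asymp n^{2}$ off-diagonal pairs would leave a useless $O(n^{-1/2})$. Obtaining $O(n^{-1})$ forces Corollary~\ref{coro:boundmomshort2}, and the argument hinges on two structural facts that must be checked with care: that for $r=0$ the limiting cross-covariance $\bV(k,j)$ is \emph{exactly} $\tfrac12\bI_4$ for all distinct $k,j$, so that the Gaussian approximation of the cross-moment factorizes as $c_kc_j$ and cancels the corresponding product of first-moment approximations; and that $H_1$ has the evenness and symmetry required to place $g^{(k,j)}$ in the class \eqref{eq:propG}. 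The residual difficulty is the uniform bookkeeping of the $N_s$- and $\|\cdot\|$-norms of the rescaled functions over all $k,j$ and all $f\in\mcg(\alpha,\beta,\delta)$, which is precisely why the hypotheses are phrased via $N_3(H_1^{2})$ and $N_5(H_2)$ rather than via $H$ itself.
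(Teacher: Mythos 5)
Your proposal is correct and follows essentially the same route as the paper's (sketched) proof: expand the squared error as a double sum, apply Corollary~\ref{coro:boundmomshort2} to the product function $g^{(k,j)}$ to get the crucial $O(n^{-1})$ bound on the off-diagonal terms (with the Gaussian leading term factorizing as $G(f(\lambda_k))G(f(\lambda_j))$ via the inverse-Laplace identity and $\bV(k,j)=\tfrac12\bI_4$ for $r=0$), bound the diagonal terms by a constant, and keep the $N$-norms uniform over $\mcg(\alpha,\beta,\delta)$. Your only deviation is using the general Corollary~\ref{coro:boundmom1} rather than Corollary~\ref{coro:boundmomshort1} for the diagonal second moments; since only an $O(1)$ bound is needed there, this is an immaterial (and arguably cleaner) variant of the paper's argument.
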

\begin{proof}[Sketch of the proof]
Applying Corollary~\ref{coro:boundmomshort1} to the function $g_{k,f}(x_1,x_2) =
H[f(\lambda_k)(x_1^2 + x_2^2)])$ and Corollary~\ref{coro:boundmomshort2} to $g_{k,j,f}(x_1,x_2,x_3,x_4) =
H[f(\lambda_k)(x_1^2 + x_2^2)]H[f(\lambda_j)(x_3^2 + x_4^2)]$ yield asymptotic
expansions for the moments  $\esp[H^2(I_{n,k})]$ and
$\esp[H(I_{n,k})H(I_{n,j})]$, which are sufficient to derive the result. The
uniformity of the constant $C$ follows from the existence of bounds on
$N_3(g_{k,f})$  and $N_4(g_{k,j,f})$ which are uniform in $\psi\in\mcg(\alpha,\beta,\delta)$. 
\end{proof}

\section{Moment bounds : Long memory case}
\label{sec:momentboundslong}
\subsection{Assumptions and main results}
We consider two sets of assumptions, depending on available information on the
behavior of the spectral density outside a neighborhood of the zero frequency.
Recall that a real valued
function $\phi$ defined in a neighborhood of zero is regularly varying at zero
with index $\rho\in\Rset$ if, for all $x$ and all $t>0$, $\lim_{x\to0}
\phi(tx)/{\phi(x)} = t^\rho$. If $\rho=0$, the function $\phi$ is said slowly
varying at zero.
Let $\vartheta\in(0,\pi)$, $0 < \delta < 1/2$, $\Delta < \delta$.  We say that
the linear filter $( \psi_j )_{j \in \Zset} $ belongs to the set
$\mcf(\vartheta,\delta,\Delta,\mu)$ if $\sum_{j=-\infty}^{\infty} \psi_j^2 <
\infty$ and if there exists $d \in [\Delta,\delta]$ such that $\psi(\lambda)$
is regularly varying at zero with index $-d$ and that
\begin{gather}
\label{eq:cond1bis}
\frac{\int_0^\pi \lambda^{2d} |\psi(\lambda)|^2 \d \lambda}{\min_{0 \leq |\lambda|
    \leq \vartheta} \lambda^{2d} |\psi(\lambda)|^2} \leq \mu \; , \\
\label{eq:cond2}
\forall j \geq 0, \ \frac{ |\psi_j| + \sum_{|t| \geq j}^\infty |\psi_{t+1} -
  \psi_t |}{\min_{0 \leq \lambda \leq \vartheta} \lambda^d |\psi(\lambda)|}
\leq \mu (1 + j)^{d-1} \; ,
\end{gather}
An example is provided by $\psi(\lambda) \eqdef \left(1 - \ei{\lambda} \right)^{-d}$
the transfer function of the causal fractional integration filter, $\psi_t =
\Gamma(t+d)/(\Gamma(d)\Gamma(t+1)), t \geq 0$.  

\subsubsection*{Local-to-zero assumptions}
We first consider local-to-zero assumptions for which nothing is required
outside a neighborhood of the zero frequency, apart from integrability of the
spectral density (see \cite{robinson:1995a}). For $\beta > 0$, we say that the
sequence $(\psi_j)_{j \in \Zset}$ belongs to the set
$\Floc(\vartheta,\beta,\delta,\Delta,\mu)$ if $( \psi_j )_{j \in
  \Zset} \in \mcf(\vartheta,\delta,\Delta,\mu)$ and
\begin{align}
\label{eq:condloc}
& \forall \lambda \in (0,\vartheta] \ , \ \frac{|\psi^*(\lambda) -
\psi^*(0)|}{\min_{\lambda \in (0,\vartheta]}|\psi^*(\lambda)|} \leq \mu
\lambda^\beta
\end{align}
with $\psi^*(\lambda) = (1- \ei{\lambda})^d \psi(\lambda)$ where $d$ is the
index of regular variation of $\psi$.  This class is quite general and includes
the impulse response of FARIMA filters \citep*[see][and the references
therein]{doukhan:oppenheim:taqqu:2002} but also processes whose spectral
density may exhibit singularity outside the zero frequency, such as the
Gegenbauer's processes. 
As seen below, under local-to-zero assumptions, the validity of the Edgeworth
expansion can only be established for the DFT coefficients in a degenerating
neighborhood of zero frequency. This is enough for, say, semi-parametric
estimation of the long-memory index by the GPH method.
\subsubsection*{Global assumptions}
In some situations, it is possible to formulate regularity assumptions over
full the frequency range $[-\pi,\pi]$ or a subset of it. These assumptions allow to prove the
validity of the Edgeworth expansion for all the frequency ordinates.  We say
that the sequence $( \psi_j ) $ belongs to the set
$\Fglo(\vartheta,\beta,\delta,\Delta,\mu)$ if $(\psi_j) \in
\Floc(\vartheta,\beta,\delta,\Delta,\mu)$ and if in addition, for
all $(\lambda,\lambda') \in (0,\vartheta] \times (0,\vartheta]$,
\begin{align}
\label{eq:condglob}
&\left| \psi^*(\lambda) - \psi^*(\lambda') \right| \leq \mu
\frac{|\psi^*(\lambda)| \vee |\psi^*(\lambda')|}{|\lambda| \wedge |\lambda'|}
|\lambda - \lambda'|
\end{align}

Under those assumptions and as in the short-memory case, we are able to prove
the validity of the Edgeworth expansion for the DFT's
(Theorem~\ref{theo:edgdftlong}) and deduce some moment bounds
(Corollaries~\ref{coro:devedgmomentLM},~\ref{coro:boundmomlong1}
and~\ref{coro:boundmomlong2}). In comparison with short memory results, note
that tapering ($r > 0$) and \refhyp{Lp} with $s \geq p'$ are required.
%
\begin{theo}  \label{theo:edgdftlong}
Assume  \refhyp{Lp} with some integer  $s \geq 3$, $p\geq 1$ and $p' \geq s$. Let $r$
  be a positive integer and $\beta$, $\delta$, $\Delta$, $\mu$, $\vartheta$ be
  constants such that $0 < \delta < 1/2$, $-r+1/2 < \Delta \leq 0$, $\mu > 0$
  and $\vartheta\in(0,\pi]$. Let $(m_n)_{n \geq 0}$ be a non-decreasing
  sequence. Assume either
  \begin{eqnarray}
    \label{cond:local} ( \psi_j )_{j \in \Zset}
    \in \Floc(\vartheta,\beta,\delta,\Delta,\mu)  \qquad \text{and} \qquad
  \lim_{n \to \infty} \Bigl( \frac{1}{m_n} + \frac{m_n}{n} \Bigr) = 0
\end{eqnarray}
or
\begin{equation}
 \label{cond:global} 
 ( \psi_j )_{j \in \Zset}
  \in\Fglo(\vartheta,\beta,\delta,\Delta,\mu)  \qquad \text{and} \qquad m_n \leq \vartheta \tilde{n}.
  \end{equation}
  Then there exist a constant $C$ and positive integers $K_0$, $N_0$ which
  depends only on $\vartheta$, $\beta$, $\delta$,  $\Delta$, $\mu$, the distribution of
  $Z_1$ and the sequence $(m_n)$, such that for any $n\geq N_0$ and
  $\bk=(k_1,\dots,k_u)$ of integers in the range $\{K_0,\dots,m_n\}$, the
  distribution of $\bS_{n}(\bk)$ has a density $q_{n,\bk}$ with respect to
  Lebesgue measure on $\Rset^{2u}$ which satisfies
\begin{align}
  \sup_{\bx \in \Rset^{2u}} (1 + \|\bx\|^s)\bigl|q_{n,\bk}(\bx) -
  \sum_{r=0}^{s-3} P_r(\bx,\bV_{n}(\bk),\{\chi_{n,\bnu}(\bk)\}) \bigr| \leq
  Cn^{-(s-2)/2}. \label{eq:devedglong2}
\end{align}
If $u=1$, one can take  $K_0 = 1$.
\end{theo}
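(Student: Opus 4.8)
The plan is to deduce \eq{devedglong2} from the Edgeworth expansion theorem for (possibly infinite) triangular arrays of i.i.d.\ random variables recalled in Appendix~\ref{sec:edegw-expans-triang}, applied to the $2u$-dimensional array $\bS_n(\bk)=\sum_{j\in\Zset}\bU_{n,j}(\bk)Z_j$ of \eq{bSn}. The argument follows the same scheme as the proof of Theorem~\ref{theo:edgdftshort}, the new feature being the singularity of $|\psi|$ at the origin, which is why tapering ($r>0$) and, in the local-to-zero case, the confinement of $\bk$ to the degenerating band $\{K_0,\dots,m_n\}$ are imposed. Everything reduces to verifying, \emph{uniformly} over $u$-tuples of distinct integers $\bk$ in that band, three things: (i) a moment condition on $Z_1$, which is part of \refhyp{Lp}; (ii) a non-degeneracy/Lyapunov condition, namely a uniform lower bound on the smallest eigenvalue of $\bV_n(\bk)=\sum_j\bU_{n,j}(\bk)\bU'_{n,j}(\bk)$ together with the bound $\sum_j\|\bU_{n,j}(\bk)\|^s\le Cn^{-(s-2)/2}$, which says that no single $Z_j$ dominates; and (iii) a Cram\'er-type condition controlling $\int_{\Rset^{2u}}\prod_j\bigl|\esp[\ei{\bt'\bU_{n,j}(\bk)Z_1}]\bigr|\,\d\bt$, for which the smoothness part of \refhyp{Lp} with $p'\ge s$ is used.

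The heart of the matter is the analysis of the weight vectors $\bU_{n,j}(\bk)$ of \eq{defUnj}. Writing $\psi(\lambda)=(1-\ei{\lambda})^{-d}\psi^*(\lambda)$, the hypothesis $(\psi_j)\in\mcf(\vartheta,\delta,\Delta,\mu)$ together with the local condition \eq{condloc} keeps $\psi^*$ bounded away from $0$ and $\infty$ on $(0,\vartheta]$ and quantitatively close to $\psi^*(0)$ near zero. First I would use Abel summation on the sum $\sum_{t=1}^n\psi_{t-j}\bC_{n,t}(\bk)$ appearing in \eq{defUnj}, together with the frequency-domain decay \eq{decaytaper} of the tapered Dirichlet kernel, whose summability against $(\psi_j)$ is exactly what the constraints $r\ge1$ and $-r+1/2<\Delta$ guarantee, to show that, after the normalization by $\bF_n^{-1}(\bk)$, each block of $\bU_{n,j}(\bk)$ agrees with the weight one would obtain for a tapered white noise up to a remainder that is uniformly negligible for $K_0\le k_i\le m_n$. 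In particular $\bV_n(\bk)$ stays uniformly close to the matrix $\bV(\bk)$ of \eq{defbvbk}, non-singular with smallest eigenvalue bounded below over $u$-tuples of distinct integers, and $\sum_j\|\bU_{n,j}(\bk)\|^s=O(n^{-(s-2)/2})$ uniformly, which is (ii). Under the local assumption \ref{cond:local} this step needs $m_n/n\to0$ in order to stay in the scaling window of the pole; under the global assumption \ref{cond:global}, the Lipschitz-type bound \eq{condglob} on $\psi^*$ replaces it and lets $\bk$ range up to $\vartheta\tilde n$.

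For the Cram\'er condition (iii) the obstacle is that most of the $\bU_{n,j}(\bk)$ are small, so $\prod_j|\esp[\ei{\bt'\bU_{n,j}(\bk)Z_1}]|$ cannot be bounded through a single characteristic function. The route I would take is to isolate a set $B=B_n(\bk,\bt)$ of indices $j$, whose cardinality grows with $n$, on which $\|\bU_{n,j}(\bk)\|$ is of the exact order $n^{-1/2}$ in a direction suitably aligned with $\bt$ (this is where the lower bound on the kernel near the Fourier frequencies and the regular variation of $\psi$ enter), and then to combine the Lindeberg-type exponential decay coming from this block for moderate $\|\bt\|$ with the tail integrability $\int_\Rset|t|^{p'}|\esp[\ei{tZ_1}]|^p\,\d t<\infty$ of \refhyp{Lp} for large $\|\bt\|$; the requirement $p'\ge s$ is precisely what matches the polynomial weight $(1+\|\bx\|^s)$ in \eq{devedglong2} after Fourier inversion. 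Once (i)--(iii) are established with constants depending only on $\vartheta,\beta,\delta,\Delta,\mu$, the law of $Z_1$ and $(m_n)$, the appendix theorem yields \eq{devedglong2} with remainder $O(n^{-(s-2)/2})$.

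Finally, for $u=1$ the covariance $\bV_n(k)$ is merely $2\times2$ and, after normalization, stays within $o(1)$ of $\tfrac12\bI_2$ for \emph{every} $k\ge1$ in the band, since the leading Fej\'er-type contribution at $\lambda_k$ dominates regardless of proximity to the pole; hence the threshold $K_0$ is unnecessary and one may take $K_0=1$. I expect the genuinely delicate points to be the uniform Cram\'er condition (iii) and, within (ii), the uniform non-degeneracy at the smallest admissible frequencies, where the long-memory blow-up of $|\psi|$ and the tapering interact; controlling it is the very reason $r>0$ is required and why, in the local case, $\bk$ is restricted to a shrinking neighborhood of the origin.
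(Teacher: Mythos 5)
Your overall route is the paper's: reduce to the triangular-array Edgeworth expansion of Theorem~\ref{theo:devedgta}, and verify \reffms{edg:normalize-Vn} and \reffms{edg:lindsmall} uniformly in $\bk$ and $\psi$, using the kernel decay \eqref{eq:decaytaper}, Abel summation against \eqref{eq:cond2} (this is exactly Lemmas~\ref{lem:spectral} and~\ref{lem:temporel}, giving $\|\bU_{n,j}(\bk)\|\leq Cn^{-1/2}(1\wedge((1+|j|)/n)^{\delta-1})$), and the approximation $\bV_n(\bk)=\bV(\bk)+\bW_n(\bk)$ with $\|\bW_n(\bk)\|_1$ controlled via Lemma~\ref{lem:covdft}. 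One remark: your step (iii) is superfluous as stated --- since $p'\geq s$ in \refhyp{Lp}, the smoothness input to Theorem~\ref{theo:devedgta} is already hypothesis \refhyp{Lp} itself, and no Cram\'er-type integral over products of characteristic functions has to be re-established in this paper; what does have to be checked is the ``smallness'' condition \eqref{eq:edg:smallness}, i.e.\ exhibiting $J_n$ with $\mathrm{card}(J_n)\leq c_0M_n^{-2}$ carrying a fixed fraction of $\sum_j\|\bU_{n,j}(\bk)\|^2$, which your block $B$ roughly plays the role of.

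The genuine gap is your justification of $K_0=1$ when $u=1$. You assert that for every $k\geq1$ the normalized covariance $\bV_n(k)$ stays within $o(1)$ of $\tfrac12\bI_2$ ``regardless of proximity to the pole''. This is false under long-range dependence: the error in Lemma~\ref{lem:covdft} is of order $k^{-1}+((j\vee k)/n)^\beta$, which is \emph{not} small for fixed small $k$, and, as the paper points out (citing \cite{hurvich93}), for fixed $k$ the limiting covariance of $\bS_n(k)$ genuinely depends on $k$ and on $d$ and is not $\tfrac12\bI_2$. Consequently your argument gives non-degeneracy only for $k\geq K_0$ with $K_0$ large, and nothing for the finitely many low frequencies $1\leq k<K_0$; the claim $K_0=1$ for $u=1$ is then unsupported. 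The paper closes precisely this hole in the second part of Lemma~\ref{lem:algebrique2}: for $1\leq k\leq K_0$ one shows that $\bV_n(k)$ converges, uniformly over $\Floc$ (resp.\ $\Fglo$), to a limit matrix $\widetilde\bV(k)$ built from the limiting taper kernel $\hat h_r(\lambda)=(2\pi a_r)^{-1/2}\int_0^1(1-\expe^{2\rmi\pi s})^r\emi{s\lambda}\,\d s$; its positive definiteness comes from the linear independence of the translates $\hat h_r(\cdot+2j\pi)$ in the weighted space with inner product $(u,v)_d=\int u v|\lambda|^{-2d}\d\lambda$ (a Gram-determinant argument), and uniformity in $d$ over the compact range $[\Delta,\delta]$ gives a strictly positive lower eigenvalue bound for these finitely many $k$. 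This very-low-frequency analysis is the one substantive ingredient your proposal is missing, and it cannot be replaced by the ``Fej\'er contribution dominates'' heuristic, which is only valid when $k\to\infty$.
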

Integrating  some function $g$ against the density $q_{n,\bk}$ and
using~(\ref{eq:devedglong2}) yields the following corollary.
\begin{coro}
\label{coro:devedgmomentLM}
Under the assumptions of Theorem~\ref{theo:edgdftlong}, there exists a constant $C$ and an integer $N$ depending only on $\vartheta$,
$\beta$,$\delta$, $\Delta$, $\mu$, $u$, $r$ and such that, for all $u$-tuple of distinct
integers $\bk$ satisfying $K_0 \leq \min(\bk)$, $\max (\bk) \leq m_n$ and any $n
\geq N$, and all measurable function $g$ such that $N_s(g) < \infty$,
  \begin{align}
    \left| \esp[g(\bS_n(\bk))] - \sum_{r=0}^{s-3} \int_{\Rset^{2u}} g(\bx)
      P_r(\bx,\bV_n(\bk),\{\chi_{n,\bnu}(\bk)\}) \d\bx \right| \leq C \ N_s(g)\
    n^{-(s-2)/2}.
  \end{align}
\end{coro}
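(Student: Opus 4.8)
The plan is to integrate the pointwise density expansion of Theorem~\ref{theo:edgdftlong} against $g$; the corollary is essentially a one-line consequence once absolute integrability is checked. Since that theorem asserts that, for $n \ge N_0$ and $\bk$ in the stated range, $\bS_n(\bk)$ admits the density $q_{n,\bk}$, I would start from $\esp[g(\bS_n(\bk))] = \int_{\Rset^{2u}} g(\bx)\, q_{n,\bk}(\bx)\, \d\bx$ (valid as soon as this integral converges absolutely), so that the quantity to be estimated is exactly
\[
\int_{\Rset^{2u}} g(\bx)\, \Bigl( q_{n,\bk}(\bx) - \sum_{r=0}^{s-3} P_r(\bx,\bV_n(\bk),\{\chi_{n,\bnu}(\bk)\}) \Bigr)\, \d\bx .
\]

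Next I would record two elementary facts. By \eq{devedglong2},
\[
\Bigl| q_{n,\bk}(\bx) - \sum_{r=0}^{s-3} P_r(\bx,\bV_n(\bk),\{\chi_{n,\bnu}(\bk)\}) \Bigr| \le \frac{C\, n^{-(s-2)/2}}{1 + \|\bx\|^s}, \qquad \bx \in \Rset^{2u},
\]
with $C$ the constant of Theorem~\ref{theo:edgdftlong}. Moreover each $P_r(\cdot,\bV_n(\bk),\{\chi_{n,\bnu}(\bk)\})$ is a fixed polynomial — whose coefficients are assembled from the entries of $\bV_n(\bk)$ and from the cumulants $\chi_{n,\bnu}(\bk)$, $|\bnu|\le s-1$ — times the Gaussian density $\varphi_{\bV_n(\bk)}$; since the proof of Theorem~\ref{theo:edgdftlong} supplies uniform (in $n$ and in the admissible $\bk$) upper and lower bounds for the eigenvalues of $\bV_n(\bk)$ and uniform bounds for those cumulants, the super-polynomial decay of the Gaussian factor gives a constant $C$, independent of $n$ and $\bk$, with $\sup_{\bx}(1+\|\bx\|^s)\,|P_r(\bx,\bV_n(\bk),\{\chi_{n,\bnu}(\bk)\})| \le C$ for $0 \le r \le s-3$. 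Together these two facts force $(1+\|\bx\|^s)\,q_{n,\bk}(\bx)$ to be bounded uniformly over $\bx$, so that $\int |g|\, q_{n,\bk}$ and each $\int |g|\, |P_r|$ are finite (bounded by $C\, N_s(g)$), which legitimises the starting identity.

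Finally I would bound the displayed integral, in absolute value, by
\[
\int_{\Rset^{2u}} |g(\bx)| \cdot \frac{C\, n^{-(s-2)/2}}{1+\|\bx\|^s}\, \d\bx \;=\; C\, n^{-(s-2)/2} \int_{\Rset^{2u}} \frac{|g(\bx)|}{1+\|\bx\|^s}\, \d\bx \;=\; C\, n^{-(s-2)/2}\, N_s(g),
\]
using the definition of $N_s(g)$, and take $N \ge N_0$. I do not expect any genuine obstacle: the entire content sits in Theorem~\ref{theo:edgdftlong}. The only point requiring a little care is the uniformity of the constant, which reduces to the uniform non-degeneracy of $\bV_n(\bk)$ and the uniform boundedness of the low-order cumulants of $\bS_n(\bk)$ over the admissible range of $\bk$ — properties established along the way in the proof of Theorem~\ref{theo:edgdftlong}, and needed there in any case for the polynomials $P_r$ to be well defined.
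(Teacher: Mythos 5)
Your proposal is correct and follows essentially the same route as the paper, which obtains the corollary by integrating $g$ against the density $q_{n,\bk}$ and applying the uniform bound~(\ref{eq:devedglong2}), so that the error is controlled by $C\,N_s(g)\,n^{-(s-2)/2}$. The extra care you take to justify $\esp[g(\bS_n(\bk))]=\int g\,q_{n,\bk}$ via the uniform boundedness of $(1+\|\bx\|^s)\,P_r(\bx,\bV_n(\bk),\{\chi_{n,\bnu}(\bk)\})$ (coming from the uniform eigenvalue and cumulant bounds in the proof of Theorem~\ref{theo:edgdftlong}) is a sound elaboration of what the paper leaves implicit.
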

Similarly to the short-memory case, one could approximate $\esp[g(\bS_n(\bk))]$
using the limiting distribution of $\bS_n(\bk)$ in place of the Gaussian
approximation as Corollary~\ref{coro:devedgmomentLM}.  Under long-range
dependence and for fixed $\bk$, the limiting covariance matrix of $\bS_n(\bk)$
fully depends on $\bk$ and not only on $(k_2-k_1,\dots,k_u-k_{u-1})$. This
behavior at ``very-low frequencies'' as been studied for instance by
\cite{hurvich93}.  However, one can control the covariance of the
standardized DFT coefficients and then the difference $\bV_n(\bk) -
\bV(\bk)$ thanks to the following lemma.
\begin{lem}\label{lem:covdft}
  For $1\leq k \leq j \leq \vartheta n/\pi - r$ and $r \geq 1$, there exists a
  constant $C$ depending only on $\vartheta,\beta,\delta,\Delta,\mu$ such that
\begin{align}
  \left| \esp ( \omega_{r,n,k}  \omega_{r,n,j} ) \right| +
  \left|\esp(\omega_{r,n,k}\bar \omega_{r,n,j}) - \varsigma_r(k-j) \right|
  \leq C p(k,j,n,\beta)
\label{eq:simple}
\end{align}
with
\begin{align}
  \label{eq:defipkj}
  p(k,j,n,\beta) =
  \begin{cases}
    (jk)^{-1/2} + \left( \frac{j \vee k}{n}
    \right)^\beta  & \text{under} \quad (\ref{cond:local}) \\ (
    jk)^{-1/2}  & \text{under} \quad (\ref{cond:global}).
  \end{cases}
  \end{align}
\end{lem}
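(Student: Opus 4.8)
\textbf{Proof plan for Lemma~\ref{lem:covdft}.}

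The plan is to reduce the covariance computations to spectral integrals against the normalized kernel $D_{r,n}$ and then exploit the decay estimate \eqref{eq:decaytaper} together with the regular variation and near-monotonicity assumptions encoded in $\mcf(\vartheta,\delta,\Delta,\mu)$, $\Floc$ and $\Fglo$. First I would write, starting from \eqref{eq:hurvichtaper} and the definition $\omega_{r,n,k} = \sqrt{2\pi}\,d_{r,n,k}/|\psi(\lambda_k)|$, the exact expressions
\begin{align*}
\esp(\omega_{r,n,k}\bar\omega_{r,n,j}) &= \frac{1}{|\psi(\lambda_k)||\psi(\lambda_j)|}\int_{-\pi}^{\pi} D_{r,n}(\lambda-\lambda_k)\overline{D_{r,n}(\lambda-\lambda_j)}\,|\psi(\lambda)|^2\,\d\lambda,\\
\esp(\omega_{r,n,k}\omega_{r,n,j}) &= \frac{1}{|\psi(\lambda_k)||\psi(\lambda_j)|}\int_{-\pi}^{\pi} D_{r,n}(\lambda-\lambda_k)D_{r,n}(-\lambda-\lambda_j)\,|\psi(\lambda)|^2\,\d\lambda,
\end{align*}
and observe that when $|\psi(\lambda)|^2$ is replaced by the constant $|\psi(\lambda_k)||\psi(\lambda_j)|$ (or, more precisely, by a suitable reference value of the spectral density near the frequencies involved) the first integral collapses to $\varsigma_r(k-j)$ by the orthogonality relations recalled after \eqref{eq:decaytaper}, and the second vanishes. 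So the quantities to bound are the errors incurred by this replacement, i.e. integrals of $D_{r,n}(\lambda-\lambda_k)\overline{D_{r,n}(\lambda-\lambda_j)}$ (resp. the $\omega\omega$ analogue) against $|\psi(\lambda)|^2/(|\psi(\lambda_k)||\psi(\lambda_j)|) - \varsigma_r\text{-weight}$.

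The core of the argument is then a splitting of the integration range into three zones: (i) $\lambda$ near $\lambda_k$ and $\lambda_j$ (say $|\lambda| \asymp \lambda_j$, since $k \leq j$), (ii) $\lambda$ near zero, $|\lambda| \ll \lambda_k$, and (iii) $\lambda$ bounded away from zero. On zone (i) I use \eqref{eq:condloc} under \eqref{cond:local} (or \eqref{eq:condglob} under \eqref{cond:global}) to control the variation of $\psi^*$, hence of $\lambda^d|\psi(\lambda)|$, across an interval of length $O(\lambda_j)$ around $\lambda_j$; the factor $(\lambda_j/n)^\beta$ in the local case comes precisely from \eqref{eq:condloc} evaluated at scale $\lambda_j$, while in the global case \eqref{eq:condglob} gives a Lipschitz-type bound that, combined with the kernel decay $|D_{r,n}(\lambda-\lambda_k)|\le Cn^{1/2}(1+n|\lambda-\lambda_k|)^{-r-1}$ for $r\ge1$ and the corresponding bound for the $j$-kernel, yields the $(jk)^{-1/2}$ term after integrating $(1+n|\lambda-\lambda_k|)^{-r-1}(1+n|\lambda-\lambda_j|)^{-r-1}$ against a weight that behaves like $\lambda^{-2d}$ near the frequencies. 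On zone (ii), I bound $|D_{r,n}(\lambda-\lambda_k)|$ and $|D_{r,n}(\lambda-\lambda_j)|$ by $Cn^{1/2}(n\lambda_k)^{-r-1}$ and $Cn^{1/2}(n\lambda_j)^{-r-1}$ respectively (using $|\lambda-\lambda_k|\asymp\lambda_k$ there), and use \eqref{eq:cond1bis} to control $\int_0^{\lambda_k}\lambda^{2d}|\psi(\lambda)|^2\,\d\lambda$ relative to the normalizing minimum; since $r\ge1$ and $\Delta>-r+1/2$, the exponents work out to give a contribution dominated by $(jk)^{-1/2}$. On zone (iii), the $\lambda$-away-from-zero region, the kernel decay gives $|D_{r,n}(\lambda-\lambda_k)||D_{r,n}(\lambda-\lambda_j)| \le Cn(n\lambda_j)^{-r-1}(n(\lambda-\lambda_k))^{-\cdots}$, which is negligible; in the local case one must be slightly careful because $|\psi|^2$ is only assumed integrable there, but the kernel decay of order $r+1\ge2$ is enough to absorb an $L^1$ spectral density.

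The main obstacle I anticipate is uniformity and bookkeeping of the $d$-dependent weights: because $d$ ranges over $[\Delta,\delta]$ and $\psi$ is only \emph{regularly varying} (not exactly a power) at zero, one cannot simply write $|\psi(\lambda)|^2 \asymp \lambda^{-2d}$ with a clean constant, and the near-monotonicity/summability condition \eqref{eq:cond2} must be invoked to turn the regular-variation statement into the honest two-sided bounds needed to compare $|\psi(\lambda)|$ at nearby Fourier frequencies with ratio $j/k$ possibly large. Concretely, I expect the delicate point to be verifying that $|\psi(\lambda_k)|/\min_{0\le\lambda\le\vartheta}\lambda^d|\psi(\lambda)| \cdot \lambda_k^d$ stays within constant factors uniformly in $k\le m_n$ and in the class, which is exactly what \eqref{eq:cond1bis}–\eqref{eq:cond2} are designed to provide (this is the same mechanism as in \cite{robinson:1995a} and \cite{moulines:soulier:1999}); once that is in hand, the three-zone estimate above is routine. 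The $r\ge1$ hypothesis is used critically in zones (ii) and (iii) to make the kernel decay summable against the possibly-singular or merely-integrable spectral weight — this is why, unlike the short-memory Lemma, tapering is required here.
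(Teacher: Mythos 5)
Your plan follows essentially the same route as the paper: the paper writes the covariances as spectral integrals of the taper kernels against the normalized transfer function and then simply invokes the analytical bounds of \cite{lang:soulier:2002} (recalled as Lemma~\ref{lem:bornes}), whose proof is precisely the kernel-decay/zone-splitting argument you outline, with the $(jk)^{-1/2}$ and $((j\vee k)/n)^{\beta}$ contributions arising as you describe. The additional content of the paper's version is only the uniformity of the constants over $\Floc$ and $\Fglo$, which you correctly single out as the delicate point.
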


Thus, we can develop the moments around the Gaussian distribution with
covariance matrix $\bV(\bk)$ as in the short-memory context. The two following
corollaries prove sufficient for our applications.  The next corollary is
useful for moment bounds on one frequency $k$.
\begin{coro}
  \label{coro:boundmomlong1}
  Under the assumptions of Theorem~\ref{theo:edgdftlong}, there exist a
  constant $C$ and a positive integer $N_0$ which depend only on $\vartheta$,
  $\beta$, $\delta$, $\Delta$, $\mu$, the distribution of $Z_1$
  and the sequence $(m_n)$, such that for any $n\geq N_0$, for any integer $k$
  in the range $\{1,\dots,m_n\}$ and any measurable function $g$ on $\Rset^{4}$
  such that $N_3(g) < \infty$
 \begin{multline*}
 \left | \esp \left[ g(\bS_n(k)) \right] - \int_{\Rset^2} g(\bx) \varphi_{\bI_2/2}(\bx)
   d \bx \right|\\ \leq C \left\{ n^{-1/2} N_3(g) +
   p(k,k,n,\beta)^{-\tau(g,\bV(k))/2} \|g(\bx)\|_{\bI_{2u}} \;
 \right\}.
\end{multline*}
\end{coro}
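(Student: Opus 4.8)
The plan is to follow the same recipe as in the short-memory Corollaries~\ref{coro:boundmomshort1} and~\ref{coro:boundmomshort2}, but to track carefully the fact that under long-range dependence $\bV_n(k)$ does not converge to $\tfrac12\bI_2$ at the same rate for all $k$. First I would apply Corollary~\ref{coro:devedgmomentLM} in the case $u=1$ (where $K_0=1$, so every $k\in\{1,\dots,m_n\}$ is admissible) and $s=3$: this gives
\[
\Bigl|\esp[g(\bS_n(k))]-\int_{\Rset^2} g(\bx)\,\varphi_{\bV_n(k)}(\bx)\,\d\bx\Bigr|\le C\,N_3(g)\,n^{-1/2}.
\]
So the whole problem is reduced to comparing the Gaussian integral $\int g\,\varphi_{\bV_n(k)}$ with the target $\int g\,\varphi_{\bI_2/2}$.

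The next step is to estimate $\|\bV_n(k)-\bV(k)\|$. By Lemma~\ref{lem:covdft} with $j=k$ (and recalling $\varsigma_r(0)=1$, so $\bV(k)=\tfrac12\bI_2$ — here one also uses that the off-diagonal and the pseudo-covariance $\esp[\omega_{r,n,k}\omega_{r,n,k}]$ are controlled, the latter being exactly the first term in~(\ref{eq:simple})), one gets $\|\bV_n(k)-\bV(k)\|\le C\,p(k,k,n,\beta)$. The key analytic fact I would then invoke is the standard estimate comparing two Gaussian measures via a function of Hermite rank $\tau$: if $\bV$ is nonsingular and $\bV'$ is a small perturbation, then
\[
\Bigl|\int g\,\varphi_{\bV'}-\int g\,\varphi_{\bV}\Bigr|\le C\,\|\bV'-\bV\|^{\tau(g,\bV)/2}\,\|g\|_{\bV},
\]
which is precisely the mechanism already used (implicitly, through Appendix~\ref{sec:proofs-coroll}) to produce the $n^{-\tau(g,\bV(\bk))/2}$ term in Corollary~\ref{coro:boundmomshort1}. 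Applying it with $\bV=\bV(k)=\tfrac12\bI_2$ and $\bV'=\bV_n(k)$ gives the second term $p(k,k,n,\beta)^{\tau(g,\bV(k))/2}\|g\|_{\bI_2/2}$ — wait, this should read $p(k,k,n,\beta)$ to the power $\tau/2$; I would double-check the exponent convention in the statement, since as written the corollary has $p(k,k,n,\beta)^{-\tau/2}$, and $p\le 1$ for the relevant range so a positive power is the decaying one; I would reconcile this by noting that under~(\ref{cond:global}), $p(k,k,n,\beta)=1/k$, so for $k$ bounded away from $0$ this is a genuine perturbation, whereas near $k=1$ one simply has the crude bound $\|\bV_n(k)-\bV(k)\|\le C$ and the Hermite-rank argument still yields a nontrivial gain only through the $n^{-1/2}$ term — in any case the combination of the two displayed bounds gives the claimed inequality after absorbing constants and replacing $\varphi_{\bV_n(k)}$ by $\varphi_{\bI_2/2}$ via the triangle inequality.

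The main obstacle I anticipate is the Gaussian-perturbation lemma itself, specifically making the constant uniform in $k$ and in $\psi\in\Floc$ or $\Fglo$: one needs that $\bV_n(k)$ stays uniformly nonsingular (bounded above and below away from $0$) for all admissible $k$ and all $n\ge N_0$, so that the density ratio $\varphi_{\bV_n(k)}/\varphi_{\bV(k)}$ and its derivatives (which control the Taylor expansion in the covariance parameter, and bring in the factor $\|g\|$ with the weight $1+\|\bx\|$) are uniformly bounded. This uniform non-degeneracy should follow from~(\ref{eq:cond1bis})--(\ref{eq:condloc}) together with Lemma~\ref{lem:covdft} (which shows $\bV_n(k)\to\tfrac12\bI_2$ when $k\to\infty$ slowly) and from a separate lower bound on $\det\bV_n(k)$ for small $k$ coming from the non-degeneracy of the kernel $D_{r,n}$ near its first nonzero frequency; I would most likely quote this from the proof of Theorem~\ref{theo:edgdftlong}, where such a lower bound on $\bV_n(\bk)$ is necessarily already established (it is needed for the Edgeworth expansion~(\ref{eq:devedglong2}) to make sense, since $\varphi_{\bV_n(\bk)}$ appears there). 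With that in hand the two estimates combine directly and the proof closes.
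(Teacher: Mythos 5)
Your proposal follows essentially the same route as the paper: the paper likewise splits by the triangle inequality into the order-$s=3$ Edgeworth error (via Corollary~\ref{coro:devedgmomentLM}), giving $Cn^{-1/2}N_3(g)$, plus a Gaussian covariance-perturbation term handled by Lemma~\ref{lem:soulier} together with the bound $\rho(\bV_n(k)-\bV(k))\leq C\,p(k,k,n,\beta)$ from Lemma~\ref{lem:covdft}, with the uniform non-degeneracy of $\bV_n(k)$ you worried about supplied by Lemma~\ref{lem:algebrique2} in the proof of Theorem~\ref{theo:edgdftlong}. Your reading of the exponent is also correct: the argument yields $p(k,k,n,\beta)^{\tau(g,\bV(k))/2}$ (positive power), so the negative exponent in the displayed statement is a typo.
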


The next corollary is useful for moment bounds on two frequencies $k<j-r$.
\begin{coro}
  \label{coro:boundmomlong2}
  Under the assumptions of Theorem~\ref{theo:edgdftlong}, there exist a
  constant $C$ and positive integers $K_1 \geq K_0$, $N_0$ which depends only
  on $\vartheta$, $\beta$, $\delta$,  $\Delta$, $\mu$, the
  distribution of $Z_1$ and the sequence $(m_n)$, such that for any $n\geq N_0$
  and for any couple $\bk=(k,j)$ of integers in the range $\{K_0,\dots,m_n\}$
  such that $k < j-r$ and any measurable function $g$ on $\Rset^{4}$
  verifying~(\ref{eq:propG}) and such that $N_4(g) < \infty$
\begin{multline*}
  \left | \esp \left[ g(\bS_n(\bk)) \right] - \int_{\Rset^4} g(\bx)
    \varphi_{\bI_4/2}(\bx) d \bx \right|
 \\ \leq     C    \left\{ n^{-(s-2)/2} N_s(g) +
    n^{-1/2} p^2(k,j,n,\beta)\| (1+\|\bx\|^s)g(\bx) \|_{\bI_4/2} \right\} \ .
 \end{multline*}
\end{coro}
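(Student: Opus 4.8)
The plan is to start from the Edgeworth expansion of Corollary~\ref{coro:devedgmomentLM} with the highest available order $s$, applied to the pair $\bk=(k,j)$, and then to replace, term by term, the Gaussian density $\varphi_{\bV_n(\bk)}$ and the Edgeworth polynomials $P_r(\cdot,\bV_n(\bk),\{\chi_{n,\bnu}(\bk)\})$ by their counterparts built from the idealized covariance $\bI_4/2$ and the cumulants of $Z_1$. So the first step is to write
\begin{align*}
  \esp[g(\bS_n(\bk))] - \int_{\Rset^4} g(\bx)\varphi_{\bI_4/2}(\bx)\d\bx
  = \underbrace{\Bigl(\esp[g(\bS_n(\bk))] - \sum_{r=0}^{s-3}\int g\, P_r(\bx,\bV_n(\bk),\cdot)\d\bx\Bigr)}_{\text{(I)}}
  + \underbrace{\sum_{r=1}^{s-3}\int g\, P_r(\bx,\bV_n(\bk),\cdot)\d\bx}_{\text{(II)}}
  + \underbrace{\int g\,\bigl(\varphi_{\bV_n(\bk)}-\varphi_{\bI_4/2}\bigr)\d\bx}_{\text{(III)}}.
\end{align*}
Term (I) is bounded by $C\,N_s(g)\,n^{-(s-2)/2}$ directly by Corollary~\ref{coro:devedgmomentLM}, which accounts for the first term on the right-hand side of \eqref{eq:boundmomshort2}'s long-memory analogue.

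For term (III) I would use Lemma~\ref{lem:covdft}: since $k<j-r$, the off-diagonal $2\times 2$ blocks of $\bV_n(\bk)$ are $O(p(k,j,n,\beta))$ while the diagonal blocks are $\tfrac12\bI_2 + O(p(k,k,n,\beta))$, and because $g$ has the product-and-symmetry structure \eqref{eq:propG} the \emph{linear} term in the perturbation of $\bV_n(\bk)$ around $\bI_4/2$ vanishes (odd moments of centered Gaussians against even functions), so the leading contribution is quadratic in the perturbation, hence $O(p^2(k,j,n,\beta))$ after controlling the diagonal corrections as well. Concretely I would interpolate, $\varphi_{\bV_n}-\varphi_{\bI_4/2}=\int_0^1 \frac{d}{ds}\varphi_{\bV_s}\,ds$ along a path $\bV_s$ of covariance matrices, express the derivative via the heat-equation identity $\partial_{V_{ab}}\varphi_V = \tfrac12\partial_{x_a}\partial_{x_b}\varphi_V$, integrate by parts against $g$, and use that the first-order (in the off-diagonal direction) term integrates to zero by the symmetry of $g$; the surviving terms are controlled by $\|(1+\|\bx\|^s)g\|_{\bI_4/2}$ and carry a factor $p^2(k,j,n,\beta)$ from the off-diagonal blocks, plus a factor $p(k,k,n,\beta)\le C n^{-1/2}$ from each diagonal block (using $k\ge K_0$ and $k\le m_n$). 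This is where the requirement $s\ge 4$ and the weight $(1+\|\bx\|^s)$ enter — one needs enough polynomial decay to bound the derivatives of $\varphi_{\bV_s}$ times $g$ in $L^1$ uniformly along the path.

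For term (II), each $P_r$ with $r\ge 1$ is a finite sum of terms $\chi_{n,\bnu}(\bk)\,D^{\bnu}\varphi_{\bV_n(\bk)}$ with $3\le|\bnu|\le r+2\le s-1$; the cumulants satisfy $\chi_{n,\bnu}(\bk)=\kappa_{|\bnu|}\sum_j \bU_{n,j}^{\bnu}(\bk)$, and a direct estimate (of the type already used to prove the Edgeworth validity, using the kernel decay \eqref{eq:decaytaper} and the filter bounds in $\mcf(\vartheta,\delta,\Delta,\mu)$) gives $|\chi_{n,\bnu}(\bk)| \le C n^{-(|\bnu|-2)/2}\le C n^{-1/2}$ since $|\bnu|\ge 3$. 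Combined with $\int |g|\,|D^{\bnu}\varphi_{\bV_n}|\le C\|(1+\|\bx\|^s)g\|_{\bI_4/2}$ (again using the weight and the boundedness of $\bV_n$ away from singularity), every term in (II) is $O(n^{-1/2}\|(1+\|\bx\|^s)g\|)$. To get the sharper $n^{-1/2}p^2(k,j,n,\beta)$ claimed in the statement for this block, I would further exploit the product structure \eqref{eq:propG}: for a multi-index $\bnu$ that is \emph{not} supported on a single pair of coordinates $\{1,2\}$ or $\{3,4\}$, the mixed cumulant $\sum_j \bU_{n,j}^{\bnu}(\bk)$ is governed by the \emph{cross}-frequency interaction and is $O(p^2(k,j,n,\beta))$ rather than merely $O(n^{-1/2})$; for $\bnu$ supported on a single pair, $D^{\bnu}\varphi_{\bI_4/2}$ is odd in that pair and integrates against the even factor of $g$ to zero (once $\bV_n$ is replaced by $\bI_4/2$, with the replacement error absorbed into (III)). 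Putting the three bounds together yields \eqref{eq:boundmomshort2}'s long-memory version.

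The main obstacle I anticipate is term (III), specifically getting the \emph{quadratic} rate $p^2(k,j,n,\beta)$ rather than the naive $p(k,j,n,\beta)$: this requires carefully organizing the interpolation so that the first-order term in the off-diagonal perturbation is seen to vanish by the symmetry \eqref{eq:propG}, and simultaneously keeping the diagonal corrections at the $n^{-1/2}$ level uniformly over $k,j\in\{K_0,\dots,m_n\}$ and over the filter class — the uniformity forces one to track how the constants in Lemma~\ref{lem:covdft} and in the kernel/filter estimates depend only on $\vartheta,\beta,\delta,\Delta,\mu$, which is bookkeeping-heavy but not conceptually hard. A secondary technical point is ensuring $\bV_n(\bk)$ stays uniformly non-singular along the interpolation path (so that $\varphi_{\bV_s}$ and its derivatives are well-defined and uniformly bounded in weighted $L^1$), which follows from Lemma~\ref{lem:covdft} once $K_0$ is taken large enough that the off-diagonal blocks are small compared to the diagonal ones.
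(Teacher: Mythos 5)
Your decomposition into (I)+(II)+(III) is structurally the same as the paper's $A_1,\dots,A_4$ (the paper splits your (II) into the $r=1$ term $A_3$ and the $r\ge 2$ terms $A_4$), and term (I) is handled identically via Corollary~\ref{coro:devedgmomentLM}. The problems are in the two steps that actually produce the rate $n^{-1/2}p^{2}(k,j,n,\beta)$. In (III) you assert that the diagonal corrections carry ``a factor $p(k,k,n,\beta)\le Cn^{-1/2}$''; this is false, since $p(k,k,n,\beta)=k^{-1}+(k/n)^{\beta}$ and $k$ is only required to exceed a \emph{fixed} integer $K_0$ (or $K_1$), so the diagonal deviation of $\bV_n(\bk)$ from $\bI_4/2$ can be of constant order $k^{-1}$. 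Moreover, the first-order term of your interpolation does not vanish in the diagonal directions: the symmetry~(\ref{eq:propG}) kills odd derivatives and the cross-pair mixed derivatives, but $\int g(\bx)\,\partial_{x_a}^{2}\varphi_{\bI_4/2}(\bx)\,\mathrm{d}\bx\neq 0$ for an even $g$ in general. The paper does not use an interpolation here; it controls this term through Lemma~\ref{lem:soulier}, where the Hermite rank $\tau(g,\bI_4/2)$ supplies the power of the covariance perturbation (rank $4$ for the centered product $h=g\otimes g$ used in the proof of Theorem~\ref{theo:rateoptimality}); evenness alone gives you no better than a bound linear in the diagonal perturbation.

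In (II), your route to the sharper bound rests on two unjustified claims: that the cross-frequency cumulants $\chi_{n,\bnu}(\bk)$ are $O(p^{2}(k,j,n,\beta))$ (nothing in your sketch, nor in the paper, proves this, and it is not needed), and that the error from replacing $\varphi_{\bV_n(\bk)}$ by $\varphi_{\bI_4/2}$ inside $P_1$ ``is absorbed into (III)''. It is not: that replacement error is of size (third cumulant)$\times$(covariance perturbation), i.e.\ $n^{-1/2}$ times a quantity \emph{linear} in $\bV_n(\bk)-\bV(\bk)$, which is exactly what has to be beaten. The paper's argument (imported from the proof of Corollary~\ref{coro:boundmomshort2}) writes $\varphi_{\bV_n(\bk)}$ as in~(\ref{eq:expressiontermr}), expands $\exp\{-\bx'\bF_n(\bk)\bx\}$ to second order as in~(\ref{eq:taylor1}), and uses that both $R_1(\bx)$ and $R_1(\bx)\,\bx'\bF_n(\bk)\bx$ consist of monomials of odd total degree, hence integrate to zero against $g\,\varphi_{\bI_4/2}$ under~(\ref{eq:propG}); only the quadratic Taylor remainder survives, and with $\|\bF_n(\bk)\|_1\le C\,p(k,j,n,\beta)$ from Lemma~\ref{lem:covdft} this gives $A_3\le C n^{-1/2}p^{2}(k,j,n,\beta)$ (this is also where $K_1>2C$ and $m_n=o(n)$ enter, to keep the Gaussian-type integral uniformly bounded). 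Without this double parity cancellation your argument only yields $n^{-1/2}p(k,j,n,\beta)$, which is weaker than the stated bound; so as written the proposal has a genuine gap precisely at the corollary's key point.
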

\subsection{GPH estimation of the long memory parameter }
\label{sec:GPH}
\subsubsection{Theoretical results}
\label{sec:theoretical-results}
A very widely used estimator of the memory parameter $d$ was introduced by
\citet*{geweke:porter-hudak:1983}.  It is obtained from the linear regression
of the log-periodogram of the observations using the logarithm of the
frequencies as explanatory variable. In contrast with the Whittle estimator,
the GPH is defined explicitly in terms of the log-periodogram ordinates, see
Eq.~(\ref{eq:GPHexplicit}) below. Many theoretical work has been achieved on
this estimator, in stationary or non-stationary contexts (see
\citealp*{fay:FvW:2007} for a survey of the main results). For instance,
\cite{giraitis:robinson:samarov:1997} proved that the GPH of Gaussian $X$ is
rate optimal for the quadratic risk and over some classes of spectral densities
that is included in our $\Floc$.  To compute the risk of the GPH estimator, one
need to compute or approximate moments of the log-periodogram. The
log-periodogram is a non-smooth function of the Fourier transform of the
observation, which are Gaussian if $X$ is Gaussian. The proof by
\cite{giraitis:robinson:samarov:1997} relies on moment bounds of non-linear
function of Gaussian variables \citep*[see][]{arcones:1994,soulier:2001}; this
technique does not extend naturally to non-Gaussian time series.  Here, we
shall apply the Edgeworth approximations obtained in preceding section to
extend this result to the case of strong sense linear process.

For the sake of simplicity of exposition, we only consider a taper of order $r=1$
and write $I_k = I_{1,n,k}$. The GPH estimator is obtained by an ordinary least
square regression of $\log(I_{k})$ on $\log |2\sin(\lambda_{k}/2)|$
\citep*[see][]{geweke:porter-hudak:1983,robinson:1995a}. With the frequency
spacing and taper order $r$, one regresses on every $r+1$ frequency. For $r=1$ it writes
\[
(\hat d_{m},\hat C) = \arg\min_{d',C'} \sum_{k=1}^m \left\{\log(I_{2k+1}) + 2d'
\log |2\sin(\lambda_{2k+1}/2)| - C'\right\}^2.
\]
where $m=m(n)$ is a bandwidth parameter.  Explicitly
\begin{equation}\label{eq:GPHexplicit}
\hat d_{m} = s_m^{-2} \sum_{k=1}^m \nu_k \log(I_{2k+1}),
\end{equation}
with $\nu_k = -2 \Bigl( \log |2\sin(\lambda_{2k+1}/2)| - \frac{1}{m}\sum_{j=1}^m
\log |2\sin(\lambda_{j}/2)| \Bigr)$ and $s_m^2 = \sum_{k=1}^m \nu_k^2$.  We
consider $\esp[(\hat d_m -d)^2]$ the mean square error (MSE) of the GPH
estimator.  Theorem \ref{theo:rateoptimality} gives a bound on the MSE which is
uniform over a class of long-range dependent linear processes, from which rate
optimality can be deduced.  

\begin{theo}\label{theo:rateoptimality}
  Under the assumptions of Theorem~\ref{theo:edgdftlong} with $s \geq 5$ and
  conditions (\ref{cond:local}), there exists a constant $C$ which depends only on
  $\beta$, $\delta$, $\Delta$, $\vartheta$, $\mu$ and the distribution of $Z_1$ such that
\[
\esp[(\hat d_m -d)^2] \leq C \left\{ \left(\frac mn \right)^{2\beta} + \frac1m
  \right\}.
\]
  With $m$ proportional to $n^{2\beta/ (2 \beta +1)}$, $\esp[(\hat d_m -d)^2] \leq C n^{-2\beta/(2\beta+1)}$.  
\end{theo}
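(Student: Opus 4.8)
The plan is to perform the usual bias–variance decomposition of the MSE of the linear statistic $\hat d_m = s_m^{-2}\sum_{k=1}^m \nu_k \log(I_{2k+1})$, then control each piece with the Edgeworth moment bounds of the previous subsection. First I would recall the elementary properties of the regression weights: $\sum_{k=1}^m \nu_k = 0$, $\sum_{k=1}^m \nu_k \log|2\sin(\lambda_{2k+1}/2)| = -\tfrac12 s_m^2$, and the standard asymptotics $s_m^2 \sim 4m$ together with $\sum_k \nu_k^2\log^2(k) = O(m)$ and $\max_k |\nu_k| = O(\log m)$. These follow from comparing the sums with integrals of $\log$ and are routine. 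Using $\sum_k \nu_k = 0$ I can replace $\log(I_{2k+1})$ by $\log(I_{2k+1}/f(\lambda_{2k+1}))$ plus $\log f(\lambda_{2k+1})$, and using the regularity of $\psi$ on the class $\Floc$ I would write $\log f(\lambda_{2k+1}) = -2d\log|2\sin(\lambda_{2k+1}/2)| + \log f^*(\lambda_{2k+1})$ with $f^*(\lambda)=(2\pi)^{-1}|\psi^*(\lambda)|^2$; the term $-2d\log|2\sin(\lambda_{2k+1}/2)|$ contributes exactly $d$ after multiplying by $s_m^{-2}\nu_k$ and summing (by the second weight identity), and condition \eqref{eq:condloc} gives $|\log f^*(\lambda)-\log f^*(0)| \leq C\lambda^\beta$, which feeds the deterministic bias.

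So write $\hat d_m - d = s_m^{-2}\sum_{k=1}^m \nu_k\big(\log(I_{2k+1}/f(\lambda_{2k+1})) + \log f^*(\lambda_{2k+1}) - \log f^*(0)\big)$. Put $Y_k \eqdef \log(I_{2k+1}/f(\lambda_{2k+1})) = \log\|\bS_n(2k+1)\|^2$, a non-smooth functional of the DFT of the type covered by \eqref{eq:propG} (with $g_1(x,y)=\log(x^2+y^2)$). The centering term $\esp[\log\|\xi_{\bI_2/2}\|^2] = -\gamma_{\text{euler}} - \log 2$ is a known constant $c_0$; it disappears after summation against $\nu_k$ since $\sum_k\nu_k=0$. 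The deterministic part is bounded by $s_m^{-2}\sum_k |\nu_k|\,C(\lambda_{2k+1})^\beta \leq C\, m^{-1}\cdot \log m \cdot m\cdot (m/n)^\beta$ — here I must be a little more careful and use $\sum_k|\nu_k|(k/n)^\beta = O(m(m/n)^\beta)$ directly (the $\log$ factors cancel against a gain of a power), giving bias $O((m/n)^\beta)$ and hence squared bias $O((m/n)^{2\beta})$. The stochastic part is $s_m^{-2}\sum_k \nu_k(Y_k - c_0)$; its variance is $s_m^{-4}\sum_{k,l}\nu_k\nu_l\,\cov(Y_k,Y_l)$, and the goal is to show this is $O(1/m)$.

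The heart of the proof — and the main obstacle — is the covariance estimate $\cov(Y_k,Y_l)$ for the non-smooth functional $Y$. For the diagonal terms $k=l$, Corollary \ref{coro:boundmomlong1} applied to $g=g_1$ and to $g=g_1^2$ (both have finite $N_3$, since $\log^2$ grows slowly) shows $\var(Y_k) = \var(\log\|\xi_{\bI_2/2}\|^2) + O(n^{-1/2}) = \tfrac{\pi^2}{6} + o(1)$, uniformly in $k\leq m_n$; this requires checking $N_3(g_1), N_3(g_1^2)<\infty$, which is immediate. For the off-diagonal terms $k<l$, I would apply Corollary \ref{coro:boundmomlong2} to $g(x_1,\dots,x_4) = g_1(x_1,x_2)g_1(x_3,x_4)$, which satisfies \eqref{eq:propG} and has finite $N_s$ for $s\geq 5$ (here is where the hypothesis $s\geq 5$ is used: we need $N_5(g)<\infty$ with room to spare for the weighted norm $\|(1+\|\bx\|^s)g\|$). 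Since $g_1$ has Hermite rank $2$ with respect to $\bI_2/2$ (the function $\log(x^2+y^2)$ is even and its projection onto degree-$1$ Hermite polynomials vanishes, while the degree-$2$ projection does not), the tensor product has a factor $p(k,l,n,\beta)^2$ in the bound — precisely the content of Corollary \ref{coro:boundmomlong2}, whose right side is $C\{n^{-(s-2)/2}N_s(g) + n^{-1/2}p^2(k,l,n,\beta)\|(1+\|\bx\|^s)g\|_{\bI_4/2}\}$. Under \eqref{cond:local}, $p(k,l,n,\beta)^2 \leq C\{(kl)^{-1} + (l/n)^{2\beta}\}$, so
\begin{align*}
\var(\hat d_m-d)_{\text{stoch}} &\leq \frac{C}{m^2}\Big\{ \sum_{k=1}^m \nu_k^2 + \sum_{k<l}|\nu_k\nu_l|\big((kl)^{-1} + (l/n)^{2\beta}\big) + m^2 \max_k\nu_k^2\, n^{-(s-2)/2} \Big\}.
\end{align*}
The first sum is $s_m^2 = O(m)$. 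In the double sum, $\sum_{k<l}|\nu_k\nu_l|(kl)^{-1} \leq (\sum_k |\nu_k|/k)^2 = O(\log^4 m)=O(m)$, and $\sum_{k<l}|\nu_k\nu_l|(l/n)^{2\beta} \leq n^{-2\beta}(\sum_l|\nu_l|l^{\beta})^2 \cdot \ldots$ — in any case this is dominated by $m^2(m/n)^{2\beta}$, which after dividing by $m^2$ is $O((m/n)^{2\beta})$ and thus absorbed into the bias term already present in the statement. The last term, with $\max_k\nu_k^2 = O(\log^2 m)$ and $m\leq m_n = o(n)$, is $O(n^{-(s-2)/2}\log^2 n)$, negligible against $1/m$ once $s\geq 3$. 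Collecting everything gives $\var(\hat d_m - d)_{\text{stoch}} \leq C\{m^{-1} + (m/n)^{2\beta}\}$, and combining with the squared bias yields the claimed $\esp[(\hat d_m-d)^2] \leq C\{(m/n)^{2\beta} + 1/m\}$; the choice $m \asymp n^{2\beta/(2\beta+1)}$ balances the two terms and gives the rate $n^{-2\beta/(2\beta+1)}$. I expect the genuinely delicate points to be (i) verifying that $g_1=\log(x^2+y^2)$ has Hermite rank exactly $2$ with respect to $\bI_2/2$, so that Corollary \ref{coro:boundmomlong2}'s $p^2$ gain is available rather than just $p^1$; (ii) the bookkeeping of the slowly varying $\log$ factors in the weight sums, which must be shown not to destroy the $1/m$ rate; and (iii) keeping every constant uniform over $\Floc(\vartheta,\beta,\delta,\Delta,\mu)$, which relies on the uniformity already built into Lemma \ref{lem:covdft} and Corollaries \ref{coro:boundmomlong1}–\ref{coro:boundmomlong2} together with uniform bounds on $f^*$ near zero coming from \eqref{eq:condloc}.
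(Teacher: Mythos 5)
Your overall strategy is the paper's own: decompose $\hat d_m - d$ into the deterministic bias $s_m^{-2}\sum_k\nu_k\log\bigl(f^*(\lambda_{2k+1})/f^*(0)\bigr)=O((m/n)^\beta)$ plus the stochastic term $s_m^{-2}\sum_k\nu_k\eta_k$ with $\eta_k=\log(I_{2k+1}/f(\lambda_{2k+1}))-\bar\eta$, then control second moments of the $\eta_k$'s via Corollary~\ref{coro:boundmomlong1} (single frequency, with $g(\bx)=\log\|\bx\|^2-\bar\eta$) and Corollary~\ref{coro:boundmomlong2} (pairs, with $h=g\otimes g$ satisfying~(\ref{eq:propG})), the hypothesis $s\geq5$ being exactly what makes $N_5(h)<\infty$ while $N_4(h)=N_2(g)=\infty$. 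Your identification of these ingredients, of the role of $s\geq5$, and your bookkeeping of the weight sums (e.g.\ $\sum_{k<l}|\nu_k\nu_l|(kl)^{-1}=O(\log^4 m)=o(m)$) are consistent with the paper.

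There is, however, one genuine gap: you apply the two-frequency bound of Corollary~\ref{coro:boundmomlong2} to \emph{all} pairs $1\leq k<l\leq m$, but that corollary (like Theorem~\ref{theo:edgdftlong} for $u\geq2$) is only valid for $\min(k,l)\geq K_1\geq K_0$, where $K_0$ may be strictly larger than $1$; only in the single-frequency case $u=1$ can one take $K_0=1$. Your proposal never addresses the excluded low-frequency pairs, and handling them crudely pair by pair (Cauchy--Schwarz on $\esp[\eta_k\eta_l]$) costs an extra $\log m/m$, which already spoils the exact bound $C\{(m/n)^{2\beta}+1/m\}$ and hence the rate-optimality conclusion. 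The paper's proof is structured precisely around this point: it splits $W_m=W_{1,m}+W_{2,m}$ at a cutoff $\ell$ (chosen as $\ell=[m^\eta]$, $1/3<\eta<1/2$), bounds the low-frequency block as a whole by $\esp[W_{1,m}^2]\leq \ell\, s_m^{-4}\sum_{k\leq\ell}\nu_k^2\esp[\eta_k^2]=o(1/m)$ using only the one-frequency corollary (valid from $k=1$), and invokes the two-frequency expansion only for $k>\ell$; this device (or at least a fixed cutoff at $K_1$ with a block Cauchy--Schwarz) is the missing ingredient in your argument. Two smaller bookkeeping remarks: Corollary~\ref{coro:boundmomlong2} bounds $\esp[(\eta_k)(\eta_l)]$ directly once $h$ is built from the \emph{centered} $g$ (so $\int h\,\varphi_{\bI_4/2}=0$), which is what $\esp[W_m^2]$ requires — if instead you pass through $\cov(Y_k,Y_l)$ and the mean of the stochastic term, you must also invoke the first-moment bounds $|\esp Y_k-c_0|$ from Corollary~\ref{coro:boundmomlong1} and check they do not degrade the rate; and the $p^2$ gain in Corollary~\ref{coro:boundmomlong2} comes from the product/symmetry structure~(\ref{eq:propG}) (the odd first-order Edgeworth term integrates to zero), not from a separate Hermite-rank-two verification, so your ``delicate point (i)'' is not actually where the work lies.
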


\begin{rem}
  The condition $s \geq 5$ seems slightly stronger than necessary for bounding
  the MSE of $\hat d$. But it is allows the function
  $h(x_1,\dots,x_4) = g(x_1,x_2)g(x_3,x_4)$ with $g(\bx) = \log(\|\bx\|^2)
  -\bar\eta$ to have finite $N_s(h)$ norm (see
  Corollary~\ref{coro:boundmomshort2} and the remark that follows.
\end{rem}

\subsubsection{Monte Carlo results}
\label{sec:monte-carlo-results}

Assuming more stringent global condition on the regularity of the spectral
density allows one to evaluate the bias term in the decomposition of
the mean squared error.  For comparison, using the specific set of assumptions
\citet*{hurvich:deo:brodsky:1998}, we can prove that the leading terms in the
MSE are of the form $ am^4/n^4 + b/m $ for bandwidth such that $\lim_{n
  \rightarrow \infty} {1}/{m} + {m\log(m)}/{n} = 0$.  The constant $a$ and $b$
can be made uniform in the class of spectral densities under consideration.  It
shows that the MSE of the GPH estimator is asymptotically insensitive to the
distribution of the innovation as soon as this distribution satisfies some
moment and regularity conditions. Finite sample implications of this statement
is illustrated here by the results of a Monte Carlo study. For sample sizes
$n=250,500,1000,2500,5000$, we have simulated 1000 realizations of a
FARIMA$(1,d,0)$ processes defined by
  \begin{align*}
    (1-B)^{0.3}(1-0.3B)X_t = Z_t
  \end{align*}
  where $B$ is the back-shift operator and $(Z_t)_{t \in \Zset}$ is a zero mean
  unit variance i.i.d sequence with the following marginal distributions (a)
  Gaussian (b) Laplacian (c) zero-mean (shifted) Pareto, with
\begin{align*}
  \prob( Z_0 \leq u) = (1 - (u+7/6)^{-7}) \ind{u \geq -7/6}.
  \end{align*}
  Whereas it is possible to simulate exactly a Gaussian FARIMA$(p,d,q)$ process
  (\textit{e.g.} computing the covariance structure and using Levinson-Durbin
  algorithm), there is no general way to do it for non Gaussian processes.  In
  the Monte-Carlo experiment, the process $(X_t)$ is obtained using a truncated
  MA($\infty$) representation.
  For each realization of each process, we evaluate the squared error $(\hat
  d_m - d)$ and define the Monte Carlo MSE as the average of those errors.
  We have focused on the sensitivity with respect to the distribution of $Z$ of
  the bandwidth $m$ which is optimal in the MSE sense. Figure~\ref{fig:mseplot}
  and Table~\ref{tab:results} show that for sample size $n=250$ the MSE is
  minimized at $m=37$ or $38$ which means that the optimal bandwidth is about
  the same for those three linear processes. Figure~\ref{fig:gphboxplot}
  represents the box-and-whiskers plot of the GPH estimator for two different
  sample sizes and the three models we are concerned with. Here again, the
  sensitivity with respect to the distribution of the driving noise is hardly
  discernible.  In Table~\ref{tab:results} we displayed the value of the bias
  and of the mean square error of the GPH at this estimated optimal bandwidth.

\begin{figure}[htbp]
  \centering \includegraphics[width=6cm]{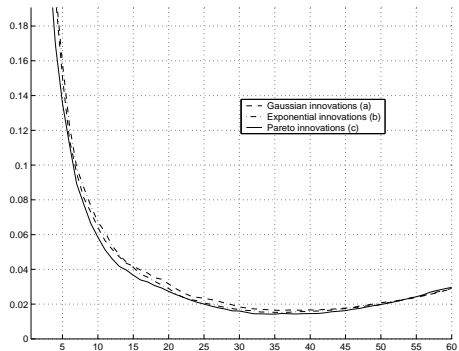}
  \caption{Comparisons of the MSE versus the bandwidth for the FARIMA processes
  (a),(b) and (c). Sample size $n = 250$}
  \label{fig:mseplot}
\end{figure}
\begin{figure}[htbp]
  \centering \includegraphics[width=6cm]{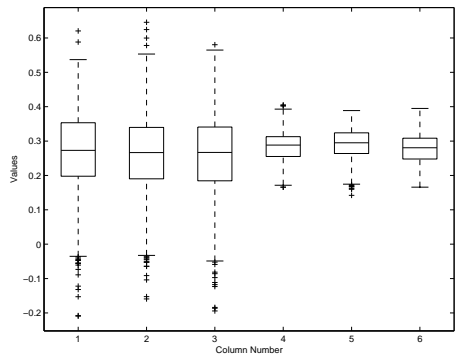}
  \caption{Box-plot of the GPH estimator for processes
  (a),(b) and (c), sample size $n = 250, 2500$}
  \label{fig:gphboxplot}
\end{figure}

{\scriptsize
\begin{table}[htbp]
  \centering
  \begin{tabular}{ccccc}

 &&(a)&(b)&(c)\\ \\ \multirow{3}{2cm}{$n$=250} &$m_{opt}$ & 37 & 37 & 38 \\ 
& $\esp_{\text{MC}}( \hat d_{m_{opt}} - d)$  & -0.03037 & -0.03751 & -0.03871 \\ 
& $\esp_{\text{MC}}(( \hat d_{m_{opt}} - d) ^2 )$  & 0.01507 & 0.01508 & 0.01569 \\ 
\\\multirow{3}{2cm}{$n$=500} &$m_{opt}$ & 64 & 60 & 61 \\ 
& $\esp_{\text{MC}}( \hat d_{m_{opt}} - d)$  & -0.02813 & -0.01871 & -0.02778 \\ 
& $\esp_{\text{MC}}(( \hat d_{m_{opt}} - d) ^2 )$  & 0.00807 & 0.00734 & 0.00766 \\ 
\\\multirow{3}{2cm}{$n$=1000} &$m_{opt}$ & 106 & 117 & 107 \\ 
& $\esp_{\text{MC}}( \hat d_{m_{opt}} - d)$  & -0.01984 & -0.02377 & -0.01737 \\ 
& $\esp_{\text{MC}}(( \hat d_{m_{opt}} - d) ^2 )$  & 0.00507 & 0.00393 & 0.00438 \\ 
\\\multirow{3}{2cm}{$n$=2500} &$m_{opt}$ & 222 & 212 & 238 \\ 
& $\esp_{\text{MC}}( \hat d_{m_{opt}} - d)$  & -0.01492 & -0.00967 & -0.02037 \\ 
& $\esp_{\text{MC}}(( \hat d_{m_{opt}} - d) ^2 )$  & 0.00207 & 0.00202 & 0.00219 \\ 
\\\multirow{3}{2cm}{$n$=5000} &$m_{opt}$ & 377 & 385 & 370 \\ 
& $\esp_{\text{MC}}( \hat d_{m_{opt}} - d)$  & -0.01097 & -0.00937 & -0.01269 \\ 
& $\esp_{\text{MC}}(( \hat d_{m_{opt}} - d) ^2 )$  & 0.00106 & 0.00104 & 0.00097 \\ 
\\
  \end{tabular}
  \caption{Optimal bandwidth, bias and MSE for processes (a), (b) and (c) and
    different sample sizes $n$. All those values are estimated by Monte Carlo}
  \label{tab:results}
\end{table}
}

\appendix

\section{Edgeworth expansion for triangular arrays}
\label{sec:edegw-expans-triang}
In this section we recall the theorem established in
\citet*{fay:moulines:soulier:2004}.  Let $( Z_t )_{t \in \Zset}$ be an i.i.d
sequence and $(\bU_{n,j})_{j\in \Zset, n\in \Nset}$ an array of vectors in
$\Rset^u$,  where $u$ is an integer. Define $\bS_n = \sum_{j\in \Zset}
\bU_{n,j} Z_j$ and let $\bV_n = \sum_{j\in\Zset} \bU_{n,j}\bU'_{n,j}$.  For
$\bnu\in\Nset^{u}$, $2 \leq |\bnu| \leq s$, denote $\chi_{n,\bnu}$ the
cumulants of $\bS_n$. Then $ \chi_{n,\bnu} = \kappa_{|\bnu|} \sum_{j \in \Zset}
\bU_{n,j}^{\bnu}.  $ where $\kappa_r$ denotes the $r$-th cumulant of $Z_1$, $r
\leq s$.  Consider the following assumptions.
\begin{fms}{edg:normalize-Vn}
  There exist positive constants $v_*$ and $v^*$ such that
\begin{align*}
  v_* \leq \liminf_n v_{\min}[\bV_n] \leq \limsup_n v_{\max}[\bV_n] \leq v^*
\end{align*}
where $v_{\min}[\bV_n]$ (resp. $v_{\max}[\bV_n]$) is the smallest (resp. the
largest) eigenvalue of $\bV_n$.
\end{fms}
\begin{fms}{edg:lindsmall}
  There exist positive constants $\eta$, $c_0$, a sequence $(M_n)_{n \in
    \Nset}$ of positive numbers, and a sequence $(J_n)_{n \in \Nset}$ of
    subsets of $\Zset$, such that, for all $n \geq 0$
\begin{gather}
\label{eq:edg:majorizingsequence}
\sup_{j \in \Zset} \|\bU_{n,j}\| \leq M_n \\
\label{eq:edg:lindeberg}
\lim_{n\to\infty} M_n = 0 \\
\label{eq:edg:smallness}
\mathrm{card}(J_n) \leq c_0 M_n^{-2} \quad \text{and} \quad \frac{\sum_{j \in
J_n} \|\bU_{n,j}\|^2}{\sum_{j \in \Zset} \|\bU_{n,j}\|^2} \geq \eta.
\end{gather}
\end{fms}
\begin{fms}{edg:l1}
There exist $\zeta \geq1$ and a sequence $(M_n)_{n \in \Nset}$
satisfying~\eqref{eq:edg:majorizingsequence} such that
\[  \sup_{n\geq 0} M_n^{\zeta} \sum_{j\in\Zset} \|\bU_{n,j}\| < \infty \; . \]
\end{fms}
\begin{theo}[\citealp*{fay:moulines:soulier:2004}]
  \label{theo:devedgta}
  Let $s\geq3$, and $p'\geq0$ be integers and $p\geq1$ be a real number. Assume
  \refhyp{Lp}($s,p,p'$), \reffms{edg:normalize-Vn} and \reffms{edg:lindsmall}.
  Assume in addition either \reffms{edg:l1} or $p' \geq s$ in
  \refhyp{Lp}($s,p,p'$).
  Then, there exist a constant $C$ and an integer $N$ (depending only on the
  distribution of $Z_1$, and the constants appearing in the assumptions) such
  that, for all $n \geq N$, the distribution of $\bS_n$ has a density $q_n$
  with respect to Lebesgue measure on $\Rset^u$ which satisfies
  \begin{align}
    \sup_{\bx \in \Rset^u} (1 + \|\bx\|^s)\Bigl|q_n(\bx) - \sum_{r=0}^{s-3}
    P_r(\bx,\bV_n,\{\chi_{n,\bnu}\}) \Bigr| \leq C \sum_{j \in \Zset}
    \|\bU_{n,j}\|^s
    \label{eq:bounddensity}
  \end{align}
\end{theo}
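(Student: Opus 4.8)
The plan is to follow the classical Fourier-analytic route to Edgeworth expansions (as in the monograph of Bhattacharya and Rao), adapted to the triangular-array rather than fixed-row-length normalization. Write $\phi(t)=\esp[\ei{tZ_1}]$ for the characteristic function of the noise. Since the $Z_j$ are independent, the characteristic function of $\bS_n=\sum_{j\in\Zset}\bU_{n,j}Z_j$ factorizes as $\hat q_n(\bt)=\prod_{j\in\Zset}\phi(\bt'\bU_{n,j})$, $\bt\in\Rset^u$. Once one shows $\hat q_n\in L^1(\Rset^u)$, the vector $\bS_n$ has a continuous density $q_n$ and, by Fourier inversion,
\[
q_n(\bx)-\sum_{r=0}^{s-3}P_r(\bx,\bV_n,\{\chi_{n,\bnu}\})=(2\pi)^{-u}\int_{\Rset^u}\emi{\bt'\bx}\bigl(\hat q_n(\bt)-E_n(\bt)\bigr)\,\d\bt,
\]
where $E_n(\bt)=\bigl(\sum_{r=0}^{s-3}\tilde P_r(\rmi\bt,\{\chi_{n,\bnu}\})\bigr)\expe^{-\bt'\bV_n\bt/2}$ is the Fourier transform of the truncated Edgeworth density. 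Multiplying the left-hand side by a monomial $\bx^\bnu$, $|\bnu|\le s$, amounts to applying $\pm D^\bnu_\bt$ to $\hat q_n-E_n$ inside the integral, so it suffices to prove $\int_{\Rset^u}|D^\bnu_\bt(\hat q_n-E_n)(\bt)|\,\d\bt\le C\sum_{j\in\Zset}\|\bU_{n,j}\|^s$ for every $|\bnu|\le s$.

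I would then split $\Rset^u$ into the three usual ranges, with thresholds governed by $M_n=\sup_j\|\bU_{n,j}\|$ and by the eigenvalue bounds $v_*,v^*$ of \reffms{edg:normalize-Vn}. On a central ball $\{\|\bt\|\le\rho/M_n\}$, for a small absolute $\rho$, one Taylor-expands $\log\phi(\bt'\bU_{n,j})$ to order $s$ using $\esp[|Z_1|^s]<\infty$ from \refhyp{Lp}; summing over $j$, comparing with the truncated cumulant series, and exponentiating gives $|\hat q_n(\bt)-E_n(\bt)|\le C(1+\|\bt\|)^{3s}\expe^{-\bt'\bV_n\bt/4}\sum_j\|\bU_{n,j}\|^s$, and the analogous bounds for the $\bt$-derivatives. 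This is the step that produces the factor $\sum_j\|\bU_{n,j}\|^s$, and integrating a Gaussian-dominated polynomial over $\Rset^u$ is routine (\reffms{edg:normalize-Vn} keeping the Gaussian nondegenerate). On an intermediate annulus $\{\rho/M_n\le\|\bt\|\le T\}$, the factor $\expe^{-\bt'\bV_n\bt/2}$ in $E_n$ is already $\le\expe^{-c/M_n^2}$, negligible relative to $\sum_j\|\bU_{n,j}\|^s$ because $\sum_j\|\bU_{n,j}\|^2=\mathrm{tr}\,\bV_n\asymp1$ forces $M_n^{-2}$ to be large; and there each $|\bt'\bU_{n,j}|\le TM_n\to0$, so the quadratic bound $|\phi(\bt'\bU_{n,j})|\le\expe^{-c(\bt'\bU_{n,j})^2}$ applies to all $j$ and yields $|\hat q_n(\bt)|\le\expe^{-c\,\bt'\bV_n\bt}\le\expe^{-cv_*\rho^2/M_n^2}$, again exponentially small; the derivatives are handled likewise.

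The delicate part is the tail $\{\|\bt\|\ge T\}$, where one must show $\int_{\|\bt\|\ge T}|D^\bnu\hat q_n(\bt)|\,\d\bt$ is finite and negligible relative to $\sum_j\|\bU_{n,j}\|^s$. Here the product structure $|\hat q_n(\bt)|=\prod_j|\phi(\bt'\bU_{n,j})|$ must be exploited: one has to extract enough genuinely contracting factors $|\phi|<1$ to beat the polynomial weight generated by the $s$ derivatives (by Leibniz each derivative brings down a factor bounded, via the moment assumption, by $C\|\bU_{n,j}\|$, which is absorbed into the estimate). This is exactly what the strengthened Cram\'er condition $\int_\Rset|t|^{p'}|\phi(t)|^p\,\d t<\infty$ in \refhyp{Lp} — which rules out lattice noise and forces quantitative decay of $|\phi|$ — together with the smallness condition in \reffms{edg:lindsmall} is designed for. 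One keeps only the factors indexed by the set $J_n$ with $\mathrm{card}(J_n)\le c_0M_n^{-2}$ and $\sum_{j\in J_n}\|\bU_{n,j}\|^2\ge\eta\,\mathrm{tr}\,\bV_n$, discards the rest; appropriate linear changes of variables reduce the remaining integral to products of one-dimensional integrals dominated by powers of $\int_\Rset|t|^{p'}|\phi(t)|^p\,\d t$, finite by \refhyp{Lp}; and the Lindeberg condition $M_n\to0$ — together with either \reffms{edg:l1}, which supplies the $L^1$-type control $\sup_n M_n^{\zeta}\sum_j\|\bU_{n,j}\|<\infty$ needed to make those changes of variables quantitatively effective, or the alternative hypothesis $p'\ge s$ — lets one bound the result by $\sum_j\|\bU_{n,j}\|^s$. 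Collecting the three ranges and inverting the Fourier transform gives the stated weighted-sup estimate.

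The main obstacle is precisely this tail estimate: controlling $\prod_j|\phi(\bt'\bU_{n,j})|$, and its $\bt$-derivatives, uniformly over the possibly pathological geometry of the array $(\bU_{n,j})$. Extracting a good low-cardinality index family along which a change of variables reduces everything to integrability of a power of $|\phi|$ is the technical heart of the argument, and assumptions \reffms{edg:normalize-Vn}, \reffms{edg:lindsmall}, \reffms{edg:l1} and the smoothness built into \refhyp{Lp} are tailored exactly to make this reduction go through.
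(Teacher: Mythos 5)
This theorem is not proved in the paper at all: Appendix~\ref{sec:edegw-expans-triang} simply recalls it from \citet{fay:moulines:soulier:2004}, so there is no in-paper argument to compare yours against. Your sketch follows essentially the same route as the cited source's proof (the Bhattacharya--Rao smoothing/Fourier-inversion scheme adapted to an infinite triangular array: factorized characteristic function, differentiation in $\bt$ to handle the weight $1+\|\bx\|^s$, three-zone split with the central zone producing the factor $\sum_j\|\bU_{n,j}\|^s$, and the tail controlled via the strengthened Cram\'er condition together with \reffms{edg:lindsmall} and either \reffms{edg:l1} or $p'\geq s$), correctly locating where each hypothesis enters; the only caveat is that the tail step --- extracting a quantitatively nondegenerate subfamily of the $\bU_{n,j}$, $j\in J_n$, for the change of variables and tracking the $s$ derivatives through the product --- is acknowledged but not carried out, which is the technical heart of the original proof.
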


\section{Proof of Theorem \ref{theo:edgdftshort}}
\label{sec:proof-theor-edgshort}
  The proof consists in checking that assumptions \reffms{edg:normalize-Vn},
  \reffms{edg:lindsmall} and \reffms{edg:l1} hold uniformly with respect
  to $\psi \in \mathcal G(\alpha,\beta,\delta)$ and $\bk$ for $\bU_{n,j}$'s of the
  form~(\ref{eq:defUnj}). 
  To prove \reffms{edg:normalize-Vn}, write $\bV_n(\bk) = \bV(\bk) +
  \bW_n(\bk)$, with $\bV(\bk)$ defined in~(\ref{eq:defbvbk}).  Define $\|\bW\|_1 = \max_{1\leq i \leq v}\sum_{j=1}^v
  |w_{i,j}|$ for any matrix $\bW=(w_{i,j})_{1\leq i,j \leq v}$.  Similarly to
  \citet[p. 54]{hannan:1960}, we have under (\ref{eq:weakdependence})
\begin{equation}
\label{eq:boundnormwnk}
\| \bW_n(\bk) \|_1 \leq C(\alpha,\beta) n^{-1}.
\end{equation}
The matrices $\bV(\bk)$ have the following algebraic property.
\begin{lem}
\label{lem:algebrique}
 There exist two positive constants $v_*$ and $v^*$ such that
\begin{equation}
  \label{eq:pauleta}
  2 v_* \leq \inf v_{\min} [\bV(\bk)] \leq \sup v_{\max}[\bV(\bk)]  \leq 2 v^*
\end{equation}
where the infimum and supremum are taken over all the $u$-tuples of distinct integers in $\Nset^u$.
\end{lem}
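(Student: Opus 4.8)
The plan is to reduce the statement to a uniform spectral bound for the $u\times u$ matrices $\Sigma(\bk)\eqdef\bigl(\varsigma_r(k_i-k_j)\bigr)_{1\le i,j\le u}$, and then to exploit that $\varsigma_r$ has finite support. First, reorder the $2u$ coordinates of $\bS_n(\bk)$ so that the $u$ real parts come first and the $u$ imaginary parts last; by \eqref{eq:defbvbk} this turns $\bV(\bk)$ into the block matrix $\tfrac12\Sigma(\bk)\oplus\tfrac12\Sigma(\bk)$. Since a permutation of coordinates preserves the spectrum, $v_{\min}[\bV(\bk)]=\tfrac12 v_{\min}[\Sigma(\bk)]$ and $v_{\max}[\bV(\bk)]=\tfrac12 v_{\max}[\Sigma(\bk)]$, so it suffices to bound the eigenvalues of $\Sigma(\bk)$ away from $0$ and from $\infty$ uniformly over all $u$-tuples $k_1<\dots<k_u$ of distinct integers.

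Next I would use an integral representation of $\Sigma(\bk)$. Expanding $(1-\ei\omega)^r(1-\emi\omega)^r$ and applying Vandermonde's identity shows that the non-negative trigonometric polynomial $g_r(\omega)\eqdef a_r^{-1}|1-\ei\omega|^{2r}=a_r^{-1}\bigl(4\sin^2(\omega/2)\bigr)^r$ has Fourier coefficients exactly $\varsigma_r(l)$, so that for every $\bx=(x_1,\dots,x_u)\in\Rset^u$,
\begin{equation*}
\bx'\,\Sigma(\bk)\,\bx=\frac{1}{2\pi}\int_{-\pi}^{\pi} g_r(\omega)\,\Bigl|\sum_{j=1}^{u}x_j\,\ei{k_j\omega}\Bigr|^{2}\d\omega .
\end{equation*}
Because the functions $\ei{k_j\omega}$ are orthonormal in $L^2([-\pi,\pi],(2\pi)^{-1}\d\omega)$ (the $k_j$ being distinct integers) and $\|g_r\|_\infty=g_r(\pi)=4^r/\binom{2r}{r}$, the right-hand side is at most $\bigl(4^r/\binom{2r}{r}\bigr)\sum_j x_j^2$; hence $v_{\max}[\Sigma(\bk)]\le 4^r/\binom{2r}{r}$, and setting $v^*\eqdef 4^{r-1}/\binom{2r}{r}$ gives the uniform upper bound $\sup_{\bk}v_{\max}[\bV(\bk)]\le 2v^*$.

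The lower bound is the only genuine difficulty, and here the key remark is that $\varsigma_r(l)=0$ for $|l|>r$. Given $\bk$, cut the ordered list $k_1<\dots<k_u$ into maximal clusters in which consecutive entries differ by at most $r$; entries lying in distinct clusters differ by more than $r$, so $\Sigma(\bk)$ is block diagonal with one block $\Sigma_C$ per cluster, and $v_{\min}[\Sigma(\bk)]=\min_C v_{\min}[\Sigma_C]$. A cluster of cardinality $m\le u$ spans at most $(u-1)r$ consecutive integers, so, up to a translation (which leaves $\Sigma_C$ unchanged), there are only finitely many possible cluster shapes. For each of them $\Sigma_C$ is positive definite: the integral representation shows that $\bx'\Sigma_C\bx=0$ forces $g_r(\omega)\,|\sum_l x_l\ei{k_{i_l}\omega}|^2=0$ for a.e.\ $\omega$, and since $g_r>0$ on $[-\pi,\pi]\setminus\{0\}$ this yields $\sum_l x_l\ei{k_{i_l}\omega}\equiv0$, hence $\bx=0$. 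Therefore $\inf_{\bk}v_{\min}[\Sigma(\bk)]$ is the minimum of the finitely many positive numbers $v_{\min}[\Sigma_C]$, say $4v_*>0$, so $\inf_{\bk}v_{\min}[\bV(\bk)]=\tfrac12\inf_{\bk}v_{\min}[\Sigma(\bk)]\ge 2v_*$, as claimed. The main obstacle is exactly this uniformity: one must notice that the finite support of $\varsigma_r$ collapses the a priori infinite family $\{\Sigma(\bk)\}$ to finitely many positive definite matrices — a Fejér-kernel construction shows $v_{\min}[\Sigma(\bk)]\to0$ once $u$ is allowed to grow, so the fixed-$u$ hypothesis together with the clustering argument is essential.
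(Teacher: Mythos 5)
Your proof is correct, and while it shares the paper's overall skeleton, it differs in the two key steps. Like the paper, you exploit the finite support of $\varsigma_r$ to split an arbitrary $\bk$ into blocks of frequencies separated by more than $r$, and you use translation invariance to reduce the ``clustered'' configurations to finitely many matrices, each of which must be shown positive definite. Where you diverge is in how positive definiteness and the final assembly are obtained. The paper identifies $\bV(\bk)$ as the covariance of tapered sine/cosine transforms of a Gaussian white noise, writes the order-$r$ transforms as combinations of the independent order-$0$ ones, and kills the coefficients one by one starting from the largest index $k_u+r$; it then passes from the blocks to the full matrix through the determinant inequality $v_{\min}\,v_{\max}^{2u-1}\geq\det\bV(\bk)=\prod_i\det\bV(\bk_i)$, which requires its separate upper bound on $v_{\max}$. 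You instead reduce to the $u\times u$ matrix $\Sigma(\bk)$ via the block structure of~(\ref{eq:defbvbk}) and represent $\bx'\Sigma(\bk)\bx$ as $\frac{1}{2\pi}\int g_r(\omega)\bigl|\sum_j x_j\ei{k_j\omega}\bigr|^2\d\omega$ with the nonnegative symbol $g_r(\omega)=a_r^{-1}\bigl|1-\ei{\omega}\bigr|^{2r}$; positive definiteness of each cluster block then follows from linear independence of distinct exponentials, and the passage to general $\bk$ is just ``$v_{\min}$ of a block-diagonal matrix is the minimum over blocks,'' with no determinant manipulation needed. Your route also yields an explicit and fully justified upper bound $v_{\max}[\bV(\bk)]\leq\tfrac12\,4^r/\binom{2r}{r}$ from $\|g_r\|_\infty$, which is cleaner than the paper's trace argument (note that $\mathrm{trace}/(2u)$ bounds the average eigenvalue, not the largest one, so the paper's stated bound $v_{\max}\leq1/2$ is not literally what that step proves; your example-free symbol bound sidesteps this). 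In short: same decomposition-and-finiteness idea, but a spectral-representation argument in place of the Gaussian white-noise/elimination argument, and a simpler block-minimum step in place of the determinant inequality.
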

\begin{proof}
Noting that $\text{trace}[\bV(\bk)] = u$,
\begin{align}
\label{eq:boundvmax}
v_{\max}[\bV(\bk)] \leq \text{trace}[\bV(\bk)]/{2u} = 1/2.
\end{align}
Take $v^* = 1/4$.
Recall  that $k_1 < \dots < k_u$. Note  that, for any $n \geq 2k_u+2r+1$, $\bV(\bk)$ is the covariance matrix of
$$\sqrt{2\pi}(c^Y_{r,n,k_1}, s^Y_{r,n,k_1}, \dotsc, c^Y_{r,n,k_u}, s^Y_{r,n,k_u})$$ with
$c^Y_{r,n,k} = (2 \pi a_r n)^{-1/2} \sum_{t=1}^n h_{t,n}^r Y_t \cos( t
\lambda_k ) $ and $s^Y_{r,n,k} = (2 \pi a_r n)^{-1/2} \sum_{t=1}^n h_{t,n}^r
Y_t \sin( t \lambda_k ) $ the sine and cosine transform of a unit-variance
zero-mean Gaussian white noise $(Y_n)_{n \in \Zset}$. Recall that
\begin{equation}
\label{eq:mbami}
c^Y_{r,n,k} = a_r^{-1/2}\sum_{l=0}^r (-1)^l \left(^r_l\right) \; c^Y_{0,n,k+l}\
\ \ \text{and} \ \ \ s^Y_{r,n,k} = a_r^{-1/2}\sum_{l=0}^r (-1)^l
\left(^r_l\right) \; s^Y_{0,n,k+l} \; .
\end{equation}
The random variables $c_{0,n,k}$ and $s_{0,n,k}$, $k=1,\dots,[(n-1)/2]$ are
centered i.i.d Gaussian with variance $1/4\pi$. Assume that $\bV(\bk)$ is
not invertible. It yields that for some $2u$-tuple of reals
$(\alpha_1,\beta_1,\cdots,\alpha_u,\beta_u) \neq(0,0,\cdots,0,0)$,
\begin{equation*}
  \sum_{j=1}^u (\alpha_j c_{r,n,k_j} + \beta_j s_{r,n,k_j})
  \stackrel{\mathbb{L}^2}{=}0 .
\end{equation*}
 Then by (\ref{eq:mbami}), there exists a
linear combination of $c_{0,n,k}$'s and $s_{0,n,k}$'s that is equal to zero. 
$c_{0,n,k_{u}+r}$ and $s_{0,n,k_{u}+r}$ appear in this combination with coefficients
$a_r^{-1/2}(-1)^r \alpha_u$ and $a_r^{-1/2}(-1)^r \beta_u$, respectively. It follows from the
independence and non-degeneracy of the $c_{0,n,k}$'s and $s_{0,n,k}$'s that $\alpha_u = \beta_u = 0$.
Iterating the argument yields the contradiction
$\alpha_u=\beta_u=\alpha_{u-1}=\beta_{u-1}=\cdots=\alpha_1=\beta_1=0$. 
Thus for any
$u$-tuple $\bk$ of distinct integers
\begin{align}
\label{eq:vminunefreq}
v_{\min}[\bV(\bk)] > 0 .
\end{align}
It remains to prove that $v_{\min}[\bV(\bk)]$ is bounded away from zero
uniformly in $\bk$. Define
$$K_u = \{ \bk = (k_1,\cdots,k_{u'}) \in \Nset^{u'} ,1 \leq u' \leq u, 0 <
k_{i+1} - k_i \leq r\} .$$ Note now that by (\ref{eq:defbvbk}),
$v_{\min}[\bV(\bk)]$ is a function of the vector $(k_2 - k_1, k_3 - k_2 , \cdots , k_{u}
- k_{u-1})$ thus taking finitely many different values on $K_u$. From the this remark and (\ref{eq:vminunefreq}),
 \begin{equation}
   \label{eq:vminfiniteset}
   v_1 \eqdef \inf_{\bk \in K_u} v_{\min}[\bV(\bk)] > 0
 \end{equation}
 since the infimum is taken on a finite set of positive values. Consider now a
 $u$-tuple $\bk$ that does not belong to $K_u$; In this case, for some $i \in
 \{1,\cdots,u-1\}$, $k_{i+1} - k_i > r$, and then $\bk$ may be partitioned as
 $L \geq 2$ blocks of indexes $(\bk_1,\dots,\bk_L)$ such that all the $\bk_i$'s
 belong to $K_u$ and, for all $i\in\{1, \cdots, L-1\}$, $\min \bk_{i+1} - \max
 \bk_{i} > r$. Let $l_i$ denotes the length of the block $\bk_i,
 i=1,\cdots,L$. By this construction and (\ref{eq:defbvbk}), the matrix
 $\bV(\bk)$ has a block-diagonal structure
\begin{align*}
  \bV(\bk) = \left(
    \begin{array}{ccc}
      \bV(\bk_1) && \boldsymbol{0} \\ & \ddots &\\ \boldsymbol{0} & &
       \bV(\bk_L)
    \end{array}
\right) \; .
\end{align*}
Using (\ref{eq:vminfiniteset}),
\begin{equation}
  \label{eq:bounddetvn}
v_{\min}[\bV(\bk)] (v_{\max}[\bV(\bk)])^{2u-1} \geq \det [\bV(\bk)] = \prod_{i =
1}^L \det[\bV(\bk_i)] \geq \prod_{i = 1}^L v_1^{2l_i} = v_1^{2u}.
\end{equation}
We conclude from (\ref{eq:bounddetvn}) and (\ref{eq:boundvmax}) that
\begin{equation}
  \label{eq:vmingeneral}
  v_{\min}[\bV(\bk)] \geq v_1^{2u} 2^{2u-1} =: v_2 > 0.
\end{equation}
(\ref{eq:pauleta}) follows from (\ref{eq:vminfiniteset}) and
(\ref{eq:vmingeneral}) with $v_* = \frac12 \min(v_1,v_2)$.
\end{proof}
\begin{proof}[Proof of Theorem~\ref{theo:edgdftshort}]
  By (\ref{eq:boundnormwnk}) and Lemma~\ref{lem:algebrique},
  \reffms{edg:normalize-Vn} holds with $v_*$ and $v^*$ of
  Lemma~\ref{lem:algebrique}, for some $N_0$, $n \geq N_0$ and uniformly in
  $\bk,\alpha$ and $\beta$. With~(\ref{eq:boundvmax}),
\begin{align}
  \Bigl| \sum_{j \in \Zset} \|\bU_{n,j}(\bk)\|^2 - u \Bigr| =
  \left|\text{trace}[\bV_n(\bk)] - u \right| \leq  C(\alpha,\beta)n^{-1/2}.\label{eq:1}
\end{align}
Prove now that \reffms{edg:lindsmall} is verified.  Since $f$ is bounded
away from zero and $\sum\nolimits_{j\in\Zset}|\psi_{j}| \leq \beta < \infty$,
(\ref{eq:edg:majorizingsequence}) and (\ref{eq:edg:lindeberg}) are verified
with $M_n \eqdef C(r)\beta \alpha^{-1/2} n^{-1/2}$.
Put $J_n = \{j , |j| < 2n\}$. Then $\text{card} (J_n) \leq c_0
M_n^{-2}$ for some $c_0$ depending only on $r,\alpha,\beta$ and
\begin{multline}
\label{eq:2}
  \sum_{|j| \in \Zset \setminus J_n} \|\bU_{n,j}(\bk) \|^2 \leq C(r)\alpha^{-2}
  n^{-1} \sum_{|j| \geq 2n} \left(\sum_{t=1}^{n} |\psi_{t+j}|\right)^2
  \\\leq C(r)\alpha^{-2} \sum_{|j| \geq 2 n}\sum_{t=1}^{n}
  \psi_{t+j}^2 \leq C(r)\alpha^{-2} \sum_{|j| \geq n} |j|\psi_j^2.
\end{multline}
Under (\ref{eq:weakdependence}), $|\psi_j| \leq \beta|j|^{-1/2-\delta}$ so that
\begin{align}
  \label{eq:4}
  \sum_{|j| \geq n} |j|\psi_j^2 \leq \beta n^{-2\delta} \sum_{|j| \geq n}
  |j|^{1/2+\delta}|\psi_j| \leq \beta^2 n^{-2\delta}
\end{align}
For any $\epsilon >0$ and large enough $n$, $ \sum_{|j| \geq n}
\|\bU_{n,j}(\bk) \|^2 \leq \epsilon$ uniformly in $\bk$ and $\psi \in \mathcal
G(\alpha,\beta,\delta)$. \eq{edg:smallness} follows from
(\ref{eq:1}), (\ref{eq:2}) and (\ref{eq:4}).  Finally,
\begin{multline*}
\sum_{j \in \Zset} \|\bU_{n,j}(\bk)\| \leq C(r) \alpha^{-1/2} n^{-1/2}
\sum_{t=1}^n \sum_{j\in\Zset} |\psi_{t+j}| = C(r) \alpha^{-1/2} n^{1/2} \sum_{j
\in \Zset} |\psi_j| \\ \leq C(r) \alpha^{-1/2} \beta n^{1/2} = C(r)^2
\alpha^{-1}\beta^2 M_n^{-1}
\end{multline*}
so that \reffms{edg:l1} holds with $\zeta=1$. \end{proof}

\section{Proof of Lemma~\ref{lem:covdft}}
The proof is an adaptation of \cite{lang:soulier:2002} to fit our need of
uniformity of the bounds with respect to the function $\psi$ whether it belongs
to $\Fglo$ or $\Floc$ only. For sake of brevity, the proof is omitted and we
refer the interested reader to their paper. It derives from their more general
analytical lemma that we recall here.
\begin{lem}[\cite{lang:soulier:2002}] \label{lem:bornes}
  Let $q\in\Nset$, $K\geq 1$, $\vartheta\in(0,\pi]$.  Let $\psi$ be an
  integrable function on $[-\pi,\pi]$, such that for all
  $x\in(0,\vartheta]\setminus\{0\}$, $\psi(-x) = \bar \psi(x)$ and
\begin{gather} \label{eq:derivee}
  |\psi(x)-\psi(y)| \leq K \frac{|\psi(x)| + |\psi(y)|}{x \wedge y} \, |x-y|,
  \quad \text{for all} (x,y) \in (0,\vartheta] \times (0,\vartheta]
\end{gather}
Assume that $|\psi|$ is regularly varying at zero with index $\rho$ and that
\[
  \psi(x) = x^\rho c(x) \exp \left\{ \int_x^{\vartheta} \frac{g(s)}s \d s \right\}
\]
with  (i) $\lim_{x\to0}g(x)=0$; (ii) $\lim_{x\to0}c(x)$ exists in $(0,\infty)$.  Let
$D_{n}$ be such that for $x \in [-\pi,\pi]$,
\begin{gather} \label{eq:noyau}
  |D_{n}(x)| \leq C \frac{n^{1/2}}{(1+n|x|)^{q+1}}.
\end{gather}
\begin{itemize}
\item 
If $\rho\in(-1,2q+1)$, there exists a constant $C$ such that, for all $n \geq
1$ and all $k$ such that $0 < x_k \leq \vartheta/2$,
\begin{gather} \label{eq:borneqk}
  \left | \int_{-\pi}^{\pi} \left( \frac{\psi(x)}{\psi(x_k)} - 1 \right)
    |D_n(x_k -x)|^2 \d x \right | \leq C \log^{\nu(q)}(k) k^{-1}.
\end{gather}
with $\nu(0)=1$ and $\nu(q)=0$ if $q\geq1$.
\item
If $\rho \in (-1/2,q+1/2)$, there exists a constant $C$ such that, for all $n
\geq 1$ and all integers $k,j$ such that $0 < x_k \ne x_j \leq \vartheta/2$,
\begin{multline}  \label{eq:borneqjk}
  \left| \int_{-\pi}^{\pi} \left( \frac{\psi(x)}{\psi(x_k)}-1 \right) D_n(x_k
    -x) \overline{D_n(x_j-x)} \d x \right | \\ + \left| \int_{-\pi}^{\pi}
    \left( \frac{\psi(x)}{\psi(x_k)}-1 \right) D_n(x_k -x) D_n(x+x_j) \d x
    \right | \\ \leq \left\{ \begin{array}{ll} C (1+|\psi(x_j)/\psi(_k)|)
    |k-j|^{-q} (j \vee k)^{-1} \leq C (jk)^{-1/2} & (q>0); \\ C (jk)^{-1/2}
    \log(j \vee k) & (q=0).
\end{array} \right.
\end{multline}
\item For any $\beta>0$, if $\rho\in(-1,2q+1)$, for any integer $k$ such that $0 <
x_k \leq \vartheta/2$,
\begin{gather}  \label{eq:bornebeta1}
  \left| \int_{-\pi}^{\pi} \frac{\psi(x) |x|^\beta }{\psi(x_k)} |D_n(x_k -x)|^2
  \d x \right | \leq C \left\{ k^{-2q-1} + (k/n)^\beta \right\}.
\end{gather}
\item If $\rho\in(-1/2,q+1/2)$, for any integers $j,k$ such that $0 < x_k \ne x_j
\leq \vartheta/2$,
\begin{multline} \label{eq:bornebeta} \left| \int_{-\pi}^{\pi} \frac{\psi(x)
      |x|^\beta }{\psi(x_k)} |D_n(x_k -x) \overline{D_n(x_j-x)}| \d x \right |
  + \left| \int_{-\pi}^{\pi} \frac{\psi(x) |x|^\beta }{\psi(x_k)} |D_n(x_k -x)
    D_n(x_j+x)| \d x \right | \\ \leq C (1 + |\psi(x_j)/\psi(x_k)|) \left\{
    (jk)^{-q} (j\vee k)^{-1} + |j-k|^{-q-1}((j \vee k)/n)^\beta \log^{\nu(q)}(j
    \vee k) \right\} \\ \leq C \left\{ (jk)^{-1/2} + ((j \vee k)/n)^\beta
    \log^{\nu(q)}(j \vee k) \right\}.
\end{multline}
\end{itemize}
\end{lem}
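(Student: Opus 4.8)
All four estimates bound the absolute value of an integral of the shape $\bigl|\int_{-\pi}^{\pi} A(x)\,D_n(x_k-x)\,\overline{D_n(x_\ell-x)}\,\d x\bigr|$, with $\ell=k$ in \eqref{eq:borneqk} and \eqref{eq:bornebeta1}, with $\overline{D_n(x_\ell-x)}$ possibly replaced by $D_n(x+x_\ell)$, and with \emph{amplitude} $A(x)=\psi(x)/\psi(x_k)-1$ or $A(x)=\psi(x)|x|^\beta/\psi(x_k)$. The plan is: (i) get pointwise control of $A$; (ii) integrate it against $|D_n|^2$ (or the cross-product) after splitting $[-\pi,\pi]$ into a \emph{peak} region $\{|x-x_k|\le x_k/2\}$, an \emph{inner} region $\{|x-x_k|>x_k/2,\ |x|\le\vartheta\}$, and an \emph{outer} region $\{|x|>\vartheta\}$; in the off-diagonal case one additionally localizes around $x_j$ (or $-x_j$).

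For step (i): dividing \eqref{eq:derivee} by $|\psi(x_k)|$ gives $|\psi(x)/\psi(x_k)-1|\le K\,\frac{|x-x_k|}{|x|\wedge x_k}\,\bigl(1+|\psi(x)/\psi(x_k)|\bigr)$ on $(0,\vartheta]$, and the symmetric bound for $x<0$ through $\psi(-x)=\bar\psi(x)$. Writing $\psi(x)/\psi(x_k)=(x/x_k)^{\rho}\,\frac{c(x)}{c(x_k)}\exp\{\int_x^{x_k}g(s)\,s^{-1}\d s\}$ and using $c(0^+)\in(0,\infty)$ together with $g(s)\to0$, I get Potter-type bounds: for every $\varepsilon>0$ there is $C_\varepsilon$ with $|\psi(x)/\psi(x_k)|\le C_\varepsilon\bigl((x/x_k)^{\rho-\varepsilon}\vee(x/x_k)^{\rho+\varepsilon}\bigr)$ on $(0,\vartheta]^2$; in particular the ratio is bounded on the peak region, so there $|A(x)|\le C|x-x_k|/x_k$ for the first amplitude and $|A(x)|\le Cx_k^\beta$ for the second, and $|\psi(x_k)|\ge c\,x_k^{\rho+\varepsilon}$. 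On the outer region I use only integrability of $\psi$ together with this last lower bound, noting that $|x-x_k|\ge\vartheta/2$ there since $x_k\le\vartheta/2$.

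For step (ii), start with the diagonal estimates, where $|D_n(x_k-x)|^2\le C^2 n\,(1+n|x-x_k|)^{-(2q+2)}$. On the peak region the substitution $u=n(x-x_k)$ turns \eqref{eq:borneqk} into $\frac{C}{nx_k}\int_0^{nx_k/2}u\,(1+u)^{-(2q+2)}\,\d u$, which is $O(1/k)$ for $q\ge1$ and $O(\log k/k)$ for $q=0$: this is the only place $\log^{\nu(q)}$ is created. On the inner region, inserting $|A|\le1+C_\varepsilon(|x|/x_k)^{\rho\pm\varepsilon}$ and $|D_n|^2\le C\,n^{-(2q+1)}|x-x_k|^{-(2q+2)}$, the integral converges --- at the endpoint $0$ because $\rho>-1$, at $\vartheta$ and at $\infty$ because $\rho<2q+1$ with $\varepsilon$ small --- and is $O((nx_k)^{-(2q+1)})=O(1/k)$. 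The outer region is controlled by step (i) and gives $O(x_k^{-\rho-\varepsilon}n^{-(2q+1)})$, again $O(1/k)$ precisely because $\rho<2q+1$ and $x_k\le\vartheta/2$. Summing the three contributions yields \eqref{eq:borneqk}. For \eqref{eq:bornebeta1} I carry the weight $|x|^\beta=x_k^\beta+(|x|^\beta-x_k^\beta)$: the first term reduces, via \eqref{eq:borneqk} and $\int|D_n|^2=O(1)$, to the $(k/n)^\beta$ contribution, and the second --- using $\bigl||x|^\beta-x_k^\beta\bigr|\le Cx_k^{\beta-1}|x-x_k|$ near $x_k$ and $\le|x|^\beta+x_k^\beta$ away from it --- contributes the $k^{-2q-1}$ term after a case split over the ranges of $\beta,\rho,q$.

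The off-diagonal estimates use the same split applied to $|D_n(x_k-x)D_n(x_j-x)|\le C\,n\,(1+n|x-x_k|)^{-(q+1)}(1+n|x-x_j|)^{-(q+1)}$, say with $k\le j$. Since $|x-x_k|+|x-x_j|\ge x_j-x_k$, at least one of the two distances is $\ge(x_j-x_k)/2$; on the set where $|x-x_j|$ is, I bound $(1+n|x-x_j|)^{-(q+1)}\le C\,(n(x_j-x_k))^{-(q+1)}=C\,(2\pi|j-k|)^{-(q+1)}$ and integrate the remaining single kernel against $A$ as in the diagonal case, which extracts $|j-k|^{-q}$ and a surviving factor $(j\vee k)^{-1}$ (up to $1+|\psi(x_j)/\psi(x_k)|$, and a $\log$ at $q=0$, where one must use the near-$x_k$ smallness of $A$ when $|j-k|\ll k$); the complementary region is symmetric. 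The term $D_n(x_k-x)D_n(x+x_j)$ is handled identically, the peaks sitting at $x_k>0$ and $-x_j<0$, separated by $x_k+x_j\ge x_j\vee x_k$, so the localization yields $(j\vee k)^{-q}\le|j-k|^{-q}$. The crude bound $|j-k|^{-q}(j\vee k)^{-1}\le(jk)^{-1/2}$ (with an extra $\log$ at $q=0$, and an extra $((j\vee k)/n)^\beta$ in the weighted case) then gives \eqref{eq:borneqjk} and \eqref{eq:bornebeta}; here every integral converges only in the narrower range $\rho\in(-1/2,q+1/2)$, because a single power of the kernel is available near each peak. The hard part throughout is exactly this bookkeeping: one must absorb the $\varepsilon$-losses of the Potter bounds simultaneously everywhere --- the inequalities $\rho<2q+1$ and $\rho<q+1/2$ being strict leaves just enough room --- while keeping the outer region, where only integrability of $\psi$ and the regular-variation lower bound on $|\psi(x_k)|$ are in hand, from costing more than a $\log$ at $q=0$.
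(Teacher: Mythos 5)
First, context: the paper itself contains no proof of this statement --- Lemma~\ref{lem:bornes} is quoted from \cite{lang:soulier:2002}, and the text explicitly omits even the proof of Lemma~\ref{lem:covdft} that rests on it, referring the reader to that paper. So your sketch can only be assessed on its own merits. Its overall strategy is the expected one and is of the same family as the cited source's argument: deduce from \eqref{eq:derivee} the bound $|\psi(x)/\psi(x_k)-1|\le K\,(1+|\psi(x)/\psi(x_k)|)\,|x-x_k|/(x\wedge x_k)$, control the ratio by Potter-type bounds coming from the Karamata representation, split $[-\pi,\pi]$ into peak, inner and outer regions, and use the kernel decay \eqref{eq:noyau}. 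Your treatment of the diagonal bounds \eqref{eq:borneqk} and \eqref{eq:bornebeta1} is essentially sound, including the correct origin of the logarithm only at $q=0$ and the role of the strict inequalities $\rho>-1$ and $\rho<2q+1$.

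The genuine gap is in the off-diagonal estimates \eqref{eq:borneqjk} and \eqref{eq:bornebeta}, which are precisely what the paper needs (they feed Lemma~\ref{lem:covdft}, hence Corollaries~\ref{coro:boundmomlong1} and~\ref{coro:boundmomlong2} and Theorem~\ref{theo:rateoptimality}). The localization device you describe --- on the set where $|x-x_j|\ge(x_j-x_k)/2$, freeze that kernel at $C\bigl(n(x_j-x_k)\bigr)^{-(q+1)}=C|j-k|^{-(q+1)}$ and integrate the remaining single kernel against $A$ --- does not deliver $C(1+|\psi(x_j)/\psi(x_k)|)\,|k-j|^{-q}(j\vee k)^{-1}$. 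The single-kernel peak integral $\tfrac{1}{nx_k}\int_0^{nx_k/2}u(1+u)^{-(q+1)}\d u$ is $O(k^{-1})$ only for $q\ge2$, is $O(\log k/k)$ at $q=1$, and $O(1)$ at $q=0$; combined with the extracted factor $|j-k|^{-(q+1)}$ this leaves a spurious logarithm at $q=1$ and, worse, fails outright when $|j-k|$ stays bounded (say $j=k+1$), where $|j-k|^{-(q+1)}$ is merely a constant: in that regime one must keep the decay of \emph{both} kernels through the (merged) peak region, i.e.\ run a diagonal-type estimate with total decay exponent $2q+2$ rather than $q+1$, and also use \eqref{eq:derivee} relative to $x_j$ as well as $x_k$ to produce the prefactor $1+|\psi(x_j)/\psi(x_k)|$. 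You only gesture at this (``one must use the near-$x_k$ smallness of $A$''), and only for $q=0$. As written, the sharp, log-free bound $(jk)^{-1/2}$ for $q>0$ --- the very gain that tapering is meant to provide and that the paper's long-memory corollaries rely on --- is not actually established; this bookkeeping is the substance of the lemma, not a routine afterthought.
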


\section{Proof of Theorem \ref{theo:edgdftlong}}
\label{sec:proof-theor-edglong}
The proof of Theorem~\ref{theo:edgdftlong} consists in checking that
assumptions \reffms{edg:normalize-Vn}, \reffms{edg:lindsmall} hold uniformly.
\begin{lem}
\label{lem:algebrique2}
There exist integers $N_0$, $K_0$, and $v_* > 0, v^*>0$ (depending only on
$\vartheta,\beta,\delta,\Delta,\mu$) such that, for all $n \geq N_0$, we have,
\begin{enumerate}
\item for all $u$-tuple $\bk$ of distinct integers, $1 \leq \min \bk \leq
\max \bk \leq m_n$,
\begin{equation}
\label{eq:bornesup}
 v_{\max}[\bV_n(\bk)] \leq v^*  \; ;
\end{equation}
\item for all integer $k$, $1 \leq k \leq m_n$
\begin{equation}
\label{eq:borneinfsinglefreq}
v_* \leq v_{\min}[\bV_n(k)] \; ;
\end{equation}
\item for all $u$-tuple $\bk$ of distinct integers, $K_0 \leq \min \bk \leq
\max \bk \leq m_n$,
\begin{equation}
\label{eq:borneinfcov}
v_* \leq v_{\min} [\bV_n(\bk)]  \; .
\end{equation}
\end{enumerate}
\end{lem}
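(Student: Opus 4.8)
The plan is to write $\bV_n(\bk)=\bV(\bk)+\bigl(\bV_n(\bk)-\bV(\bk)\bigr)$ and to transport the spectral bounds of Lemma~\ref{lem:algebrique} — which are purely algebraic statements about the matrix $\bV(\bk)$ of~\eqref{eq:defbvbk} and hence hold here verbatim — across the perturbation. Since $r\ge1$, Lemma~\ref{lem:covdft} is available: expanding each entry of $\bV_n(\bk)$ through identities such as $\mathrm{Re}\,z\,\mathrm{Re}\,w=\tfrac12\mathrm{Re}(zw)+\tfrac12\mathrm{Re}(z\bar w)$ and invoking~\eqref{eq:simple} gives, for $n\ge N_0$ and all coordinates $k_i,k_j$ of $\bk$ in $\{1,\dots,m_n\}$,
\[
\bigl|[\bV_n(\bk)]_{ab}-[\bV(\bk)]_{ab}\bigr|\le C\,p(k_i,k_j,n,\beta)\le C\bigl\{(\min\bk)^{-1}+(m_n/n)^\beta\bigr\},
\]
the term $(m_n/n)^\beta$ being absent under~\eqref{cond:global}; hence $\|\bV_n(\bk)-\bV(\bk)\|_1\le C(u)\{(\min\bk)^{-1}+(m_n/n)^\beta\}$. (That the hypotheses of Lemma~\ref{lem:covdft} hold on $\{1,\dots,m_n\}$ for $n$ large uses $m_n/n\to0$ under~\eqref{cond:local} and $m_n\le\vartheta\tilde n$ under~\eqref{cond:global}.)

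Given this, (1) and (3) are immediate. For (1), $v_{\max}[\bV_n(\bk)]\le\|\bV_n(\bk)\|_1\le\|\bV(\bk)\|_1+\|\bV_n(\bk)-\bV(\bk)\|_1$; each row of $\bV(\bk)$ has at most $2r+1$ nonzero entries, each of modulus $\le\tfrac12$ since $|\varsigma_r(l)|=a_r^{-1}\binom{2r}{r+l}\le1$, so $\|\bV(\bk)\|_1\le\tfrac12(2r+1)$, and the perturbation term is bounded uniformly over $\bk$ with $\min\bk\ge1$ (recall $(m_n/n)^\beta$ stays bounded); take $v^*$ to be the resulting constant. For (3), Lemma~\ref{lem:algebrique} furnishes $2v_*>0$ with $v_{\min}[\bV(\bk)]\ge2v_*$ for every $u$-tuple of distinct integers; choosing $K_0$ large and (under~\eqref{cond:local}) $N_0$ large so that $C(u)\{K_0^{-1}+(m_n/n)^\beta\}\le v_*$ for $n\ge N_0$, Weyl's inequality together with $\|\cdot\|_2\le\|\cdot\|_1$ for symmetric matrices gives $v_{\min}[\bV_n(\bk)]\ge v_{\min}[\bV(\bk)]-\|\bV_n(\bk)-\bV(\bk)\|_1\ge2v_*-v_*=v_*$ whenever $K_0\le\min\bk$.

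Statement (2) is the delicate one, since there $k$ ranges down to $1$, where the perturbation above is of order $1$. I would split the range. For $k\ge K_1$, with $K_1$ chosen (as in (3)) so that $C(1)\{K_1^{-1}+(m_n/n)^\beta\}\le\tfrac14$, the perturbation estimate applied with $\bV(k)=\tfrac12\bI_2$ gives $v_{\min}[\bV_n(k)]\ge\tfrac14$. For the finitely many $k\in\{1,\dots,K_1-1\}$ one argues directly. First, $\bV_n(k)$ is nonsingular for every $n\ge4$: a nontrivial real combination $\alpha\,\mathrm{Re}\,\omega_{r,n,k}+\beta\,\mathrm{Im}\,\omega_{r,n,k}=\mathrm{Re}\bigl((\alpha-\rmi\beta)\omega_{r,n,k}\bigr)$ is, up to a nonzero scalar, $\sum_{t=1}^n\mathrm{Re}\bigl((\alpha-\rmi\beta)h_{t,n}^r\ei{t\lambda_k}\bigr)X_t$, whose variance is $\mathrm{const}\cdot\int_{-\pi}^\pi|g(\lambda)|^2 f(\lambda)\,\d\lambda$ with $g(\lambda)=\sum_{t=1}^n\mathrm{Re}\bigl((\alpha-\rmi\beta)h_{t,n}^r\ei{t\lambda_k}\bigr)\ei{t\lambda}$; as $f=(2\pi)^{-1}|\psi|^2$ is positive on a set of positive Lebesgue measure, this vanishes only if $g\equiv0$, i.e.\ only if the numbers $h_{t,n}^r\ei{t\lambda_k}$, $1\le t\le n-1$, are collinear, which fails since $\arg(h_{t,n}^r\ei{t\lambda_k})\equiv\pi t(r+2k)/n-r\pi/2\pmod\pi$ takes at least two values mod $\pi$ (using $0<r+2k<n$), forcing $\alpha=\beta=0$. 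Second, for each fixed $k$ the matrix $\bV_n(k)$ converges, as $n\to\infty$ and uniformly over $\psi$ in the class (through the Lang--Soulier estimates of Lemma~\ref{lem:bornes} after the rescaling $\lambda=2\pi x/n$ and dominated convergence), to a ``very-low-frequency'' limit $\bV^{\infty}_{d}(k)$ depending on $\psi$ only through $d\in[\Delta,\delta]$, continuous in $d$, and positive definite (a limiting version of the collinearity argument). Compactness of $[\Delta,\delta]$ then gives $\inf_d v_{\min}[\bV^{\infty}_d(k)]>0$, which combined with the finite-$n$ nonsingularity bounds $\inf_{n\ge N_0}v_{\min}[\bV_n(k)]$ below for each $k<K_1$; the $v_*$ of the statement is the minimum of these finitely many bounds with $\tfrac14$ and with the constant of (3).

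The main obstacle is exactly this last point: a strictly positive lower bound for $v_{\min}[\bV_n(k)]$ at the lowest frequencies, uniform both in $n$ and over the whole class $\Floc$ (resp.\ $\Fglo$) — a regime in which $\bV_n(k)-\bV(k)$ does not vanish and Lemma~\ref{lem:covdft} is quantitatively useless. The compactness/convergence route above is one way; an alternative is a direct lower bound for $\det\bV_n(k)=\tfrac14\bigl((\esp|\omega_{r,n,k}|^2)^2-|\esp\omega_{r,n,k}^2|^2\bigr)$ from the integral representations $\esp|\omega_{r,n,k}|^2=\tfrac1{2\pi f(\lambda_k)}\int|D_{r,n}(\lambda_k-\lambda)|^2 f(\lambda)\,\d\lambda$ and the analogue for $\esp\omega_{r,n,k}^2$, bounding the two factors below on a $1/n$-neighbourhood of $\lambda_k$ via the regular variation of $f$ (uniform over the class through~\eqref{eq:cond1bis} and~\eqref{eq:condloc}) and the kernel decay~\eqref{eq:decaytaper}, which for $r\ge1$ keeps the $\lambda_k$ and $-\lambda_k$ contributions well separated. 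By contrast, parts (1) and (3) are routine once Lemmas~\ref{lem:covdft} and~\ref{lem:algebrique} are in hand.
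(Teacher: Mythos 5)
Your proposal is correct and follows essentially the same route as the paper: the decomposition $\bV_n(\bk)=\bV(\bk)+\bW_n(\bk)$ with Lemma~\ref{lem:covdft} controlling $\|\bW_n(\bk)\|_1$ gives (1) and, via the algebraic lower bound of Lemma~\ref{lem:algebrique} and a large choice of $K_0,N_0$, also (3); for (2) at the lowest frequencies the paper argues exactly as you sketch, showing (uniformly over the class, by rescaling and approximating $n^{-1/2}D_{r,n}(\lambda/n)$ by $\hat h_r(\lambda)$) that $\bV_n(k)$ converges to a limit depending only on $d$, whose positive definiteness follows from the linear independence of the functions $\hat h_r(\cdot+2j\pi)$ under the weighted inner product $(\cdot,\cdot)_d$, and then uses continuity in $d$ and compactness of $[\Delta,\delta]$. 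Your finite-$n$ nonsingularity digression is harmless but unnecessary, since the lemma only requires $n\geq N_0$ and the uniform convergence already supplies the bound there.
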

\begin{proof}
As in Appendix~\ref{sec:proof-theor-edgshort}, we put $\bV_n(\bk) = \bV(\bk) +
\bW_n(\bk)$ where $\bV(\bk)$ is defined in~(\ref{eq:defbvbk}).
Applying Lemma~\ref{lem:covdft}, we obtain
\begin{equation}\label{eq:radius}
\|\bW_{n}(\bk)\|_1 \leq
C
(\vartheta,\beta,\delta,\Delta,\mu)
\begin{cases}
\frac{1}{k_1} + \left( \frac{m_n}{n}\right)^\beta \log \left( \frac{m_n}{n}
\right) & \text{under} \quad (\ref{cond:local}),\\ \frac{1}{k_1} &
\text{under} \quad (\ref{cond:global}).
\end{cases}
\end{equation}
(\ref{eq:bornesup}) follows immediately. The proof of \eqref{eq:borneinfcov}
follows by picking $N_0, K_0$ large enough. For \eqref{eq:borneinfsinglefreq},
it remains to prove that for any integer $k$, $1 \leq k \leq K_0$, $\bV_n(k)$
converges to a positive definite matrix $\widetilde\bV(k)$ and that this convergence is
uniform w.r.t to $\psi$, for $\psi \in
\Floc(\vartheta,\beta,\delta,\Delta,\mu)$ or $\psi \in
\Fglo(\vartheta,\beta,\delta,\Delta,\mu)$.  What follows is an adaptation of
\citep[Lemma 7.3]{iouditsky:moulines:soulier:2001}. Write
\begin{equation}
   \esp [ |\omega_{r,n,k} |^2] =
  \frac1{f(\lambda_k)} \left( \int_{|\lambda|\leq \vartheta\pi} +
    \int_{|\lambda| > \vartheta \pi} \right)
  | D_{r,n}(\lambda-\lambda_k) |^2 f(\lambda) d
  \lambda  =: A_1 + A_2 \label{eq:decompmom}
\end{equation}
where $D_{r,n}$ is defined in~(\ref{eq:form:kernel}).
For $n \geq 4\pi K_0/\vartheta$, $1 \leq k \leq K_0$ and $|\lambda|\geq
\vartheta\pi$, $|n(\lambda - \lambda_k)| \geq
n\vartheta / 2$. Using~(\ref{eq:decaytaper}) and~(\ref{eq:cond1bis}), we get
\begin{equation}
  A_2 \leq
  \frac{C\lambda_k^{2d}}{\lambda_k^{2d}f(\lambda_k)}n^{-2r-1}\int_{|\lambda| >
    \vartheta \pi} \lambda^{2d}f(\lambda) d\lambda \leq Cn^{-2r} \label{eq:bornegrandlambda}
\end{equation}
By change of variable,   $$ A_1  = \frac{n^{2d}
    |1-\emi{\lambda_k}|^{2d}}{f^*(\lambda_k)}
   \int_{|\lambda| \leq n\vartheta} \left| n^{-1/2} D_{r,n}(\lambda /  n
     - \lambda_k)
   \right|^2 n^{-2d} |1-\emi{
    \lambda/n}|^{-2d}f^*(\frac \lambda n) d \lambda.
$$
Write $
 \lim_{n\to\infty} n^{-1/2} D_{r,n} (\lambda / n ) = \frac{1}{\sqrt{2 \pi a_r}} \int_0^1
(1-\expe^{2\rmi \pi s})^r \emi{s\lambda} ds =: \hat h_r(\lambda)$.
By Riemann approximation, it can be seen that $|n^{-1/2} D_{r,n} (\lambda/ n) -
\hat h_r(\lambda) | \leq C(1+|\lambda|)/n$. Note also that  $|\hat
h_r(\lambda)|\leq C|\lambda|^{-r-1}$. Then
\begin{multline}
  \label{eq:approxriemann}
  \Biggl| A_1 - \frac{n^{2d}
    |1-\ei{\lambda_k}|^{2d}}{f^*(\lambda_k)}
  \times \int_{-n\vartheta}^{n\vartheta} \left| \hat h_r (\lambda - 2 \pi k ) \right|^2
 n^{-2d} |1-\ei{\lambda/n}|^{-2d}f^*(\frac \lambda n) d\lambda
\Biggr| \\\leq C k^{2d} n^{-r} \leq C n^{-r}.
\end{multline}
Here and in the following, $C$ is a generic constant which depends only on
$\vartheta,\beta,\delta,\Delta,\mu,r$ and $K_0$.
For $|\lambda| \leq n\vartheta$, using~(\ref{cond:local}),
\begin{align}
& \frac{f^*(0)}{f^*(\lambda_k)}    \left| n^{-2d} |1-\ei{\lambda/n}|^{-2d} - |\lambda|^{-2d}
  \right| +
\frac{ | f^*(\frac \lambda n) - f^*(0) |}{f^*(\lambda_k)}  n^{-2d} |1-\ei{\lambda/n}|^{-2d}
\nonumber \\
&\leq C  \left\{ \left| n^{-2d} |1-\ei{\lambda/n}|^{-2d} - |\lambda|^{-2d}
  \right| + n^{-2d} |1-\ei{\lambda/n}|^{-2d} |\frac \lambda n |^{\beta'}
\right\}
\label{eq:bound2}
\end{align}
with $\beta' = \beta \wedge 1$.
For $ x \in [-\pi,\pi], \frac 2 \pi |x| \leq |\ei{x} -1| = |2 \sin \frac x 2|
\leq  |x|$ and $ ||\ei{x} -1| - |x|| \leq \ x^2/2$. Also,  for
any $\upsilon \in \Rset, x>0,y>0, |x^\upsilon - y^\upsilon| \leq |\upsilon|
(x^{\upsilon - 1} \vee y^{\upsilon - 1}) |x-y|$. Using those relations,
write, for $\lambda \in [-n\pi , n\pi]$,
\begin{align*}
   \Bigl| n^{-2d} |1-\ei{\lambda/n}|^{-2d} - |\lambda|^{-2d} \Bigr|
& \leq  C n^{-2d} |\frac{\lambda}{n}|^{-2d-1}\Bigl||1 -\ei{\lambda / n}| - |\frac \lambda n| \Bigr|\\
& \leq  C n^{-1} |\lambda|^{-2d+1}
\end{align*}
Then
\begin{align}
 \int_{-n\vartheta}^{n\vartheta}  &
  |\hat h_r (2 \pi k - \lambda)|^2  \left| n^{-2d} |1-\ei{\lambda/n}|^{-2d} - |\lambda|^{-2d}
  \right| d\lambda \nonumber \\
&\leq   C n^{-1}  \int_{-n\vartheta}^{n\vartheta}    |\hat h_r (2 \pi k - \lambda)|^2 |\lambda|^{-2d+1} d\lambda  \nonumber \\
&\leq   C n^{-1} \int_{-n\vartheta}^{n\vartheta}   |\hat h_r (2 \pi k - \lambda)|^2 |\lambda|^{2r} d\lambda  \leq C n^{-1} \label{eq:vahid1}
\end{align}
and
\begin{align}
 \int_{-n\vartheta}^{n\vartheta}  &
|\hat h_r (2 \pi k - \lambda)|^2 n^{-2d}
  |1-\ei{\lambda/n}|^{-2d} |\frac \lambda n |^{\beta'} d\lambda \nonumber  \\
  &\leq \
n^{-\beta'} \int_{-n\vartheta}^{n\vartheta}
  |\hat h_r (2 \pi k - \lambda)|^2 |\lambda|^{-2d
    + \beta'} d\lambda \nonumber  \\
  &\leq C n^{-\beta'}.
  \label{eq:vahid2}
\end{align}
Gathering~(\ref{eq:decompmom}),~(\ref{eq:bornegrandlambda}),~(\ref{eq:approxriemann}),~(\ref{eq:bound2}),~(\ref{eq:vahid1}),~(\ref{eq:vahid2}) yields
\begin{equation*}
  \Biggl| \esp [|\omega_{r,n,k}|^2] - \frac{(2\pi)^{2d} k^{2d}
    f^*(0)}{f^*(\lambda_k)} \int_{\infty}^{+\infty} \left| \hat
  h_r (\lambda - 2 \pi k ) \right|^2  |\lambda|^{-2d} d\lambda
 \Biggr| \leq Cn^{-\beta'}
\end{equation*}
Similar arguments leads to 
\begin{equation*}
  \Biggl| \esp [\omega_{r,n,k}^2] - \frac{(2\pi)^{2d} k^{2d}
    f^*(0)}{f^*(\lambda_k)} \int_{\infty}^{+\infty} \hat
  h_r (\lambda - 2 \pi k ) \hat
  h_r (\lambda + 2 \pi k )   |\lambda|^{-2d} d\lambda
  \Biggr| \leq Cn^{-\beta'}
\end{equation*}
Defining the scalar product $(u,v)_d = \int_\Rset
u(\lambda)v(\lambda)|\lambda|^{-2d} d\lambda$, Then $\det \bV_n(k)$ is
uniformly approximated by the Gram determinant of the functions $\hat
h_r(\lambda - 2k\pi)$ and $\hat h_r(\lambda + 2k\pi)$ associated with the
product $(\cdot,\cdot)_d$ and then is a continuous function of $\eta_k(d) :=
\limn \esp [\left| \omega_{r,n,k} \right| ^2]$ and $\eta'_k(d) := \limn \esp [
\omega_{r,n,k}^2]$.. The whole set of functions $\hat h_r(\lambda + 2j\pi),
j\in \Zset$ is linearly independent, so that those determinant are positive.
Using continuity of $\eta_k$ and $\eta'_k$ w.r.t. $d$, the infimum on the
compact set $[-\Delta,\delta]$ and the minimum over $k=1,\dots,K_0$ is positive
too, which concludes the proof.
\end{proof}
\begin{lem}
\label{lem:spectral}
There exists a constant $C$ (depending only on $\vartheta$, $\beta$, $\delta$,
$\Delta$, $\mu$,$r$) such that for all $k \in \{1,\dots,\tilde n\}$,
\begin{equation}
  \label{eq:spectral}
 \frac1{\sqrt{n f(\lambda_k)}} \left| \sum_{t=1}^n h_{t,n}^r \psi_{t+j}
 \expe^{\rmi t \lambda_k} \right| \leq C n^{-1/2}.
\end{equation}
\end{lem}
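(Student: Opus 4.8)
The plan is to pass to the frequency domain. Writing the Fourier coefficients of $\psi$ as $\psi_{m}=(2\pi)^{-1}\int_{-\pi}^{\pi}\psi(\lambda)\emi{m\lambda}\d\lambda$ and inserting this in the left-hand side, the definition~\eqref{eq:form:kernel} of $D_{r,n}$ gives, for every $j\in\Zset$ and $1\le k\le\tilde n$,
\[
\sum_{t=1}^{n}h_{t,n}^{r}\psi_{t+j}\ei{t\lambda_{k}}=\frac{(na_{r})^{1/2}}{2\pi}\int_{-\pi}^{\pi}\psi(\lambda)\,\emi{j\lambda}\,D_{r,n}(\lambda_{k}-\lambda)\,\d\lambda .
\]
Since $\lvert\emi{j\lambda}\rvert=1$ and $f(\lambda_{k})=(2\pi)^{-1}\lvert\psi(\lambda_{k})\rvert^{2}$, the inequality~\eqref{eq:spectral} is equivalent to
\[
\int_{-\pi}^{\pi}\lvert\psi(\lambda)\rvert\,\lvert D_{r,n}(\lambda_{k}-\lambda)\rvert\,\d\lambda\ \le\ C\,n^{-1/2}\,\lvert\psi(\lambda_{k})\rvert ,
\]
which I will prove with $C$ depending only on $\vartheta,\beta,\delta,\Delta,\mu,r$; note that this bound is then automatically uniform in $j$. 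The two tools are the kernel decay~\eqref{eq:decaytaper}, extended by $2\pi$-periodicity of $D_{r,n}$ to $\lvert D_{r,n}(\nu)\rvert\le Cn^{1/2}(1+n\|\nu\|)^{-(r+1)}$ with $\|\nu\|$ the distance from $\nu$ to $2\pi\Zset$, and the assumption $r\ge1$.

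Before splitting the integral I record the pointwise control of $\psi$ that makes the constant uniform. Set $m\eqdef\min_{0\le\lambda\le\vartheta}\lambda^{d}\lvert\psi(\lambda)\rvert$. From~\eqref{eq:cond2} one gets $\lvert\psi_{l}\rvert\le\mu(1+\lvert l\rvert)^{d-1}m$ for \emph{all} $l\in\Zset$: for $l\ge0$ this is part of~\eqref{eq:cond2}, while for $l<0$ it follows from the telescoping identity $\psi_{l}=\sum_{t\le l-1}(\psi_{t+1}-\psi_{t})$ (valid because $\psi\in L^{2}$ forces $\psi_{t}\to0$) together with the tail estimate in~\eqref{eq:cond2}. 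Since $2d-2<-1$ (as $d\le\delta<1/2$) this yields $\|\psi\|_{L^{2}([-\pi,\pi])}^{2}=2\pi\sum_{l}\psi_{l}^{2}\le Cm^{2}$, hence $\|\psi\|_{L^{1}([-\pi,\pi])}\le Cm$. On the other hand, \eqref{eq:condloc} forces $\psi^{*}(\lambda)\eqdef(1-\ei{\lambda})^{d}\psi(\lambda)$ to stay within $\mu\lambda^{\beta}\min_{\lambda\in(0,\vartheta]}\lvert\psi^{*}(\lambda)\rvert$ of $\psi^{*}(0)$ on $(0,\vartheta]$, which elementarily gives $\lvert\psi^{*}(\lambda)\rvert\asymp\lvert\psi^{*}(0)\rvert$ uniformly there; since $\lvert1-\ei{\lambda}\rvert=2\lvert\sin(\lambda/2)\rvert\asymp\lvert\lambda\rvert$ on $(0,\pi]$, this translates into
\[
c\,\lvert\psi^{*}(0)\rvert\,\lvert\lambda\rvert^{-d}\ \le\ \lvert\psi(\lambda)\rvert\ \le\ C\,\lvert\psi^{*}(0)\rvert\,\lvert\lambda\rvert^{-d}\qquad\text{for }\lambda\in(0,\vartheta],
\]
with constants depending only on $\vartheta,\beta,\delta,\Delta,\mu$ and \emph{no} slowly varying factor, even at $d=0$. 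In particular $m\asymp\lvert\psi^{*}(0)\rvert$, $\lvert\psi(\lambda)\rvert\le C(\lambda_{k}/\lvert\lambda\rvert)^{d}\lvert\psi(\lambda_{k})\rvert$ for $\lvert\lambda\rvert,\lambda_{k}\in(0,\vartheta]$, and $\|\psi\|_{L^{1}}\le C\lambda_{k}^{d}\lvert\psi(\lambda_{k})\rvert$.

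Now take $k$ with $\lambda_{k}\le\vartheta/2$ (the case $\lambda_{k}>\vartheta/2$ is similar and in fact easier, $\psi$ being then, on the relevant range, bounded above and below). Split $[-\pi,\pi]$ into $A_{1}=\{\|\lambda-\lambda_{k}\|<\lambda_{k}/2\}$, $A_{2}=\{\lvert\lambda\rvert<\lambda_{k}/2\}$, $A_{3}=\{3\lambda_{k}/2\le\lvert\lambda\rvert\le\vartheta\}$ and $A_{4}=\{\vartheta<\lvert\lambda\rvert\le\pi\}$. On $A_{1}$, $\lvert\psi(\lambda)\rvert\le C\lvert\psi(\lambda_{k})\rvert$ by the regular-variation bound, while $\int_{A_{1}}\lvert D_{r,n}(\lambda_{k}-\lambda)\rvert\,\d\lambda\le Cn^{1/2}\int_{\Rset}(1+n\lvert u\rvert)^{-(r+1)}\,\d u\le C\,n^{-1/2}$ — the only place where $r\ge1$ is used. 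On $A_{2}$ and $A_{3}$ one has $\|\lambda-\lambda_{k}\|\ge\lambda_{k}/2$, resp.\ $\ge\lvert\lambda\rvert/3$, so $1+n\|\lambda-\lambda_{k}\|\ge\pi k$; with $\lvert\psi(\lambda)\rvert\le C\lvert\psi^{*}(0)\rvert\lvert\lambda\rvert^{-d}$, $\int_{0}^{\lambda_{k}/2}\lambda^{-d}\,\d\lambda\le C\lambda_{k}^{1-d}$ and $\int_{3\lambda_{k}/2}^{\vartheta}\lambda^{-d-r-1}\,\d\lambda\le C\lambda_{k}^{-d-r}$ (both finite, the latter because $d+r>1/2$ by $\Delta>-r+1/2$), a direct computation gives a contribution $\le C\lvert\psi^{*}(0)\rvert\,n^{-r-1/2}\lambda_{k}^{-d-r}=C\lvert\psi^{*}(0)\rvert\,\lambda_{k}^{-d}\,n^{-1/2}\,(2\pi k)^{-r}$, using $n\lambda_{k}=2\pi k$. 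On $A_{4}$, $\lvert D_{r,n}(\lambda_{k}-\lambda)\rvert\le Cn^{1/2}(n\vartheta/2)^{-(r+1)}$ and $\|\psi\|_{L^{1}}\le Cm$ give a contribution $\le C\,m\,\vartheta^{-(r+1)}\,n^{-r-1/2}$. Since $\lvert\psi(\lambda_{k})\rvert\ge c\,m\,\lambda_{k}^{-d}$ and (only when $d<0$) $\lambda_{k}^{-d}\le(n/2\pi)^{\lvert d\rvert}$, each of these four contributions is $\le C\lvert\psi(\lambda_{k})\rvert\,n^{-1/2}$ once $n\ge N_{0}$, the residual factors $(2\pi k)^{-r}$ and $n^{\lvert d\rvert-r}$ being bounded (recall $\lvert d\rvert\le-\Delta<r-1/2$). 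This establishes~\eqref{eq:spectral} and the lemma.

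The routine part of this is the handful of elementary integral estimates; the actual obstacle is the uniformity. ``$\psi$ regularly varying with index $-d$'' is qualitative and non-uniform, so one must upgrade it, via~\eqref{eq:cond2} and~\eqref{eq:condloc}, to the two-sided bound $\lvert\psi(\lambda)\rvert\asymp\lvert\psi^{*}(0)\rvert\lvert\lambda\rvert^{-d}$ on $(0,\vartheta]$ with class-controlled constants; without the extra structure of $\Floc$/$\Fglo$ (i.e.\ with a bare $\mcf$-assumption) $\psi$ could carry an unbounded slowly varying factor at $d=0$ and the $O(1)$ bound would fail. The second delicate point is bookkeeping the exponent $d$: it runs over the compact interval $[\Delta,\delta]\subset(-r+1/2,1/2)$, and one has to check that every power that appears ($1-d>1/2$, $d+r>1/2$, $d-1<-1$, $\lvert d\rvert-r<-1/2$) keeps the sign needed for the corresponding integral to converge with the stated rate. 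Finally, the reason tapering ($r\ge1$) is indispensable in this long-memory setting is visible on $A_{1}$: $\int_{\Rset}(1+n\lvert u\rvert)^{-(r+1)}\,\d u$ is $O(n^{-1})$ only when $r\ge1$ (it diverges logarithmically for $r=0$).
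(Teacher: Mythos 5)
Your proof is correct on the frequency range that actually matters and reaches the bound by a somewhat simpler mechanism than the paper's. Both arguments start from the same identity expressing the tapered sum as $(2\pi)^{-1}(na_r)^{1/2}\int_{-\pi}^{\pi}\psi(\lambda)\emi{j\lambda}D_{r,n}(\lambda_k-\lambda)\,\d\lambda$, and both split the integral into a piece near $\lambda_k$, a piece near $0$, the bulk of $(0,\vartheta]$, and the region beyond $\vartheta$, using the kernel decay \eqref{eq:decaytaper} together with \eqref{eq:cond1bis}, \eqref{eq:cond2} and \eqref{eq:condloc}. The difference lies in how the integrand is controlled: the paper isolates the pure fractional part $\psi^*(0)(1-\ei{\lambda})^{-d}$, exploits the cancellation $\int_{-\pi}^{\pi}D_{r,n}(\lambda)\,\d\lambda=0$ to replace it by the increment $\Delta(\lambda,\lambda_k)$, and bounds that increment piecewise (a device inherited from the untapered $r=0$ literature), whereas you first upgrade \eqref{eq:condloc} to the class-uniform two-sided envelope $|\psi(\lambda)|\asymp|\psi^*(0)||\lambda|^{-d}$ on $(0,\vartheta]$ and then estimate with absolute values throughout, the key point being that $\int|D_{r,n}|=O(n^{-1/2})$ once $r\geq1$. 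Since the lemma is only used in the tapered setting $r\geq1$, your route yields the same $O(n^{-1/2})$ bound (the paper's cancellation merely buys extra factors $k^{-r}$, which are not needed), and your preparatory steps -- the coefficient bound $|\psi_l|\leq\mu m(1+|l|)^{d-1}$ for all $l\in\Zset$, hence $\|\psi\|_{L^1}\leq Cm$, and the exponent bookkeeping $d+r>1/2$, $|d|<r-1/2$ -- are sound and are indeed where the uniformity over the class resides.

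One claim is wrong, though it concerns a regime the paper itself passes over in silence: the parenthetical assertion that for $\lambda_k>\vartheta/2$ the argument is ``easier, $\psi$ being then bounded above and below on the relevant range''. Under $\Floc$ (and even $\Fglo$) nothing is assumed about $\psi$ on $(\vartheta,\pi]$ beyond square integrability -- the class is designed to allow poles or zeros away from the origin, e.g.\ Gegenbauer-type spectra -- so no such boundedness is available, and a class-uniform constant cannot be obtained there by your (or any) argument. The paper's own proof carries the same restriction implicitly (its decomposition requires $2\lambda_k\leq\vartheta$ and its estimates invoke \eqref{eq:condloc}, valid on $(0,\vartheta]$ only), and in the application only frequencies $k\leq m_n$ enter, so this does not damage your main argument; but you should restrict the statement you prove to $\lambda_k\leq\vartheta/2$ (as you effectively do) rather than assert a boundedness that the class does not provide.
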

\begin{proof}
 The main tool of the proof is the bound~(\ref{eq:decaytaper}) and the
 technique are the same as the one used in the proof of Lemma~\ref{lem:covdft}.
Decompose
\begin{align*}
   |\psi(\lambda_k)|^{-1} \frac{1}{\sqrt{2 \pi a_r n}} \sum_{t=1}^n h_{t,n}^r
  \psi_{t-j} \expe^{\rmi t \lambda_k} = |\psi(\lambda_k)|^{-1} \int_{-\pi}^\pi
  \psi(\lambda) \expe^{\rmi j \lambda} D_{n,r}(\lambda_k - \lambda) d\lambda 
\end{align*}
into
\begin{align*}
A_1 &= |\psi(\lambda_k)|^{-1} \left(
  \int_{-\pi}^{-\vartheta} + \int_\vartheta^\pi \right) \psi(\lambda)
  \expe^{\rmi j \lambda} D_{n,r}(\lambda_k - \lambda) d\lambda, \\ A_2 &=
  |\psi(\lambda_k)|^{-1} \psi^*(0) \int_{-\vartheta}^{\vartheta} (1 -
  \expe^{\rmi \lambda})^{-d} \expe^{\rmi j \lambda} D_{n,r}(\lambda_k - \lambda)
  d\lambda, \\ A_3 &= |\psi(\lambda_k)|^{-1} \int_{-\vartheta}^{\vartheta} (1 -
  \expe^{\rmi \lambda})^{-d} (\psi^*(\lambda) - \psi^*(0)) \expe^{\rmi j
  \lambda} D_{n,r}(\lambda_k - \lambda ) d\lambda.
\end{align*}
By Eq.~(\ref{eq:decaytaper}), if $|\lambda| \in [\vartheta, \pi]$,
$|D_{n,r}(\lambda_k-\lambda)| \leq C n^{-1/2-r}$. Note that $n^{-1} \lambda_k^{d} =
n^{-1} \lambda_k^{-1} \lambda_k^{d + 1} \leq 1/ (2 \pi k)$.  (\ref{eq:cond1bis})
implies that $ |A_1| \leq C n^{1/2-r} k^{-1} \leq C n^{-1/2}$.  Consider
$A_2$. Since $\int_{-\pi}^\pi D_{n,r}(\lambda) d\lambda = 0$,
\[
A_2 = \int_{-\vartheta}^\vartheta \Delta(\lambda,\lambda_k)
D_{n,r}(\lambda_k-\lambda) d\lambda, \quad \Delta(\lambda,\lambda_k) = \left( (1
- \expe^{\rmi \lambda})^{-d}  - (1 - \expe^{\rmi
\lambda_k})^{-d} \right)  \expe^{\rmi j \lambda}|\psi(\lambda_k)|^{-1}.
\]
Decompose this integral on the intervals $[-\vartheta,-\lambda_k/2]$,
$[-\lambda_k/2,\lambda_k/2]$, $[\lambda_k/2, 2 \lambda_k]$ and $[2 \lambda_k,
\vartheta]$. If $\lambda\in[-\lambda_k/2,\lambda_k/2]$, then
$|D_{n,r}(\lambda_k-\lambda)| \leq C \sqrt{n} k^{-r-1}$ and $ |
\Delta(\lambda,\lambda_k) | \leq C \left(|\lambda|^{-d} \lambda_k^{d} + 1
\right)$. Hence~:
\[
\left| \int_{-\lambda_k/2}^{\lambda_k/2} \Delta(\lambda,\lambda_k)
D_{n,r}(\lambda_k - \lambda) d\lambda \right| \leq C k^{-r} n^{-1/2}.
\]
If $\lambda\in[\lambda_k/2,2\lambda_k]$, then $|\Delta(\lambda,\lambda_k)| \leq
C \left( \lambda_k^{-1} |\lambda-\lambda_k| +1 \right)$.  Since
$\int_{-\lambda_k/2}^{\lambda_k} (1 + n |\lambda|)^{-r-1} d\lambda \leq C
n^{-1}$, we have
\[
\left| \int_{\lambda_k/2}^{2 \lambda_k} \Delta(\lambda,\lambda_k)\;
D_{n,r}(\lambda_k - \lambda) d\lambda \right | \leq C n^{-1/2}.
\]
If $\lambda \in [2\lambda_k, \vartheta]$ (and similarly on $[-\vartheta,
-\lambda_k/2]$), we use that $|\Delta(\lambda,\lambda_k)| \leq C (\lambda^{-d}
\lambda_k^{d}+1)$ and $|D_{n,r}(\lambda-\lambda_k)| \leq n^{-1/2-r}
|\lambda-\lambda_k|^{-1-r}$ . Hence,
\[
\left| \int_{2\lambda_k}^{\vartheta} \Delta(\lambda,\lambda_k) D_{n,r}(\lambda_k
- \lambda) d\lambda \right | \leq C n^{-1/2-r} \int_{\lambda_k}^\infty \left
(\lambda^{d} \lambda_k^{-d} + 1 \right) \lambda^{-1-r} d\lambda \leq C n^{-1/2}
k^{-r}.
\]
Consider $A_3$. Under~(\ref{eq:condloc}), we have
\[
| A_3 | \leq C \lambda_k^{d} \int_{-\vartheta}^\vartheta |\lambda|^{-d+\beta}
|D_{n,r}(\lambda-\lambda_k)| d\lambda.
\]
Decompose this integral as above. If $\lambda \in [-\lambda_k/2,\lambda_k/2]$,
proceeding as above:
\[
\lambda_k^{d} \int_{-\lambda_k/2}^{\lambda_k/2} |\lambda|^{-d+\beta}
|D_{n,r}(\lambda-\lambda_k)| d\lambda \leq C n^{-1/2} k^{-r} \lambda_k^\beta.
\]
If $\lambda_k \in [\lambda_k/2,2\lambda_k]$, $\lambda_k^{d}
|\lambda|^{-d+\beta} \leq C \lambda_k^\beta$, and
$\int_{\lambda_k/2}^{2\lambda_k} |D_{n,r}(\lambda-\lambda_k)| d\lambda \leq C
n^{-1/2}$.  Hence:
\[
\lambda_k^{d} \int_{\lambda_k/2}^{2\lambda_k} |\lambda|^{-d+ \beta}
|D_{n,r}(\lambda-\lambda_k)| d\lambda \leq C n^{-1/2} \lambda_k^\beta.
\]
Finally, if $\lambda \in [2\lambda_k,\vartheta]$ (and similarly, if $\lambda
\in [-\vartheta,-\lambda_k/2]$), we have as above:
\[
\lambda_k^{d} \int_{2\lambda_k}^\vartheta \lambda^{-d+\beta} \lambda^{-1-r}
d\lambda \leq C \lambda_k^{d} n^{-1/2-r} \int_{\lambda_k}^\vartheta
\lambda^{-d-1-r} d\lambda = C n^{-1/2} k^{-r}.
\]

\end{proof}
\begin{lem}
\label{lem:temporel}
There exists a constant $C$ (depending only on $\vartheta$, $\beta$, $\delta$,
$\Delta$, $\mu$,$r$) such that for all $k \in \{1,\dots,\tilde n\}$,
\begin{equation}
 \label{eq:temporel}
\frac1{\sqrt{nf(\lambda_k)}}\left| \sum_{t=1}^n h_{t,n}^r \psi_{t+j}
  \expe^{\rmi t \lambda_k} \right| \leq C n^{-1/2} \; \lambda_k^{d-1}
  (1+|j|)^{d-1} \leq C n^{-1/2} ((1+|j|)/n)^{d-1}.
\end{equation}
\end{lem}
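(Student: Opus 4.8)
The plan is to estimate the sum by summation by parts, playing the oscillation of $\ei{t\lambda_k}$ against the bounded‑variation, polynomially decaying sequence $(\psi_{t+j})_t$; this is the time‑domain analogue of the spectral argument used for Lemma~\ref{lem:spectral}, and it is condition~\eqref{eq:cond2} (rather than the kernel bound~\eqref{eq:decaytaper}) that provides the decay in $j$. Since $\sqrt{nf(\lambda_k)}=(2\pi)^{-1/2}\sqrt n\,|\psi(\lambda_k)|$, it suffices to establish
\[
\Bigl|\sum_{t=1}^n h_{t,n}^r\,\psi_{t+j}\,\ei{t\lambda_k}\Bigr|\leq C(r)\,\frac nk\,\Bigl(\min_{0\leq\lambda\leq\vartheta}\lambda^d|\psi(\lambda)|\Bigr)(1+|j|)^{d-1},
\]
and then, for $\lambda_k\leq\vartheta$, to divide by $\sqrt{nf(\lambda_k)}$ and use $\min_{0\leq\lambda\leq\vartheta}\lambda^d|\psi(\lambda)|\leq\lambda_k^d|\psi(\lambda_k)|$ together with $\lambda_k=2\pi k/n$, which gives $Cn^{-1/2}\lambda_k^{d-1}(1+|j|)^{d-1}$ (the finitely many frequencies with $\lambda_k>\vartheta$ are treated exactly as the term $A_1$ in the proof of Lemma~\ref{lem:spectral}, via~\eqref{eq:cond1bis}); the second inequality in~\eqref{eq:temporel} then follows because $\lambda_k\geq2\pi/n$ and $d-1<0$ force $\lambda_k^{d-1}\leq(2\pi)^{d-1}n^{1-d}$.

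First I would bound the partial sums of the tapered exponential. Expanding $h_{t,n}^r=(1-\ei{\lambda_1t})^r$ by the binomial theorem yields $h_{t,n}^r\ei{t\lambda_k}=\sum_{p=0}^r\binom{r}{p}(-1)^p\ei{t\lambda_{k+p}}$ (since $p\lambda_1+\lambda_k=\lambda_{k+p}$), so
\[
B_t:=\sum_{s=t}^n h_{s,n}^r\,\ei{s\lambda_k}=\sum_{p=0}^r\binom{r}{p}(-1)^p\sum_{s=t}^n\ei{s\lambda_{k+p}},
\]
and each inner geometric sum has modulus at most $2/|1-\ei{\lambda_{k+p}}|\leq C/\lambda_{k+p}\leq C/\lambda_k$; hence $\sup_{1\leq t\leq n}|B_t|\leq C(r)n/k$.

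Next, since $h_{t,n}^r\ei{t\lambda_k}=B_t-B_{t+1}$ with $B_{n+1}=0$, summation by parts gives
\[
\sum_{t=1}^n h_{t,n}^r\psi_{t+j}\ei{t\lambda_k}=\psi_{1+j}B_1+\sum_{t=2}^n(\psi_{t+j}-\psi_{t+j-1})B_t,
\]
so the left side is at most $(\sup_t|B_t|)\bigl(|\psi_{1+j}|+\sum_{t\geq2}|\psi_{t+j}-\psi_{t+j-1}|\bigr)$. When $|j|$ is large the indices $t+j$, $1\leq t\leq n$, all satisfy $|t+j|\geq|j|-n$, so the bracketed factor is bounded by a tail of~\eqref{eq:cond2}, namely $\leq C\mu\bigl(\min_{0\leq\lambda\leq\vartheta}\lambda^d|\psi(\lambda)|\bigr)(1+|j|)^{d-1}$ (telescoping the tails of $\psi$ towards $\pm\infty$ when $j<0$ so that~\eqref{eq:cond2} may be applied with $|j|$ in place of $j$); for moderate $|j|$ one instead uses the crude bound $\leq\mu\min_{0\leq\lambda\leq\vartheta}\lambda^d|\psi(\lambda)|$ of~\eqref{eq:cond2} with $j=0$, together with the factor $\lambda_k^{d-1}$ coming from the normalization and, where needed, the estimate $Cn^{-1/2}$ of Lemma~\ref{lem:spectral}. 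Multiplying the two factors yields the displayed estimate.

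The step I expect to be the main obstacle is pinning down the precise exponent $(1+|j|)^{d-1}$ in the last display uniformly over $\psi\in\Floc(\vartheta,\beta,\delta,\Delta,\mu)$ and $\psi\in\Fglo(\vartheta,\beta,\delta,\Delta,\mu)$: this forces one to control the total variation of the \emph{tail} of $(\psi_{t+j})_t$, not merely its $\ell^1$-size (which is not even summable in this range), and it is precisely the normalizing denominator $\min_{0\leq\lambda\leq\vartheta}\lambda^d|\psi(\lambda)|$ in~\eqref{eq:cond2} that makes the resulting constants uniform.
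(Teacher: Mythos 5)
Your argument is essentially the paper's own proof: expand the taper $h_{t,n}^r$ by the binomial theorem, sum by parts against the exponential partial sums (bounded by $2/\lambda_{k+p}\leq C/\lambda_k$), control the resulting total-variation factor via condition~(\ref{eq:cond2}), and normalize using $\min_{0\leq\lambda\leq\vartheta}\lambda^d|\psi(\lambda)|\leq\lambda_k^d|\psi(\lambda_k)|$. The paper's proof is a two-line version of exactly this (it, too, leaves the shifted-index bookkeeping for negative $j$ and the frequencies beyond $\vartheta$ implicit), so your write-up is a faithful, more detailed reconstruction rather than a different route.
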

\begin{proof}
  By applying the definition of the weights $h^r_{t,n}$ and summation by parts,
  we have:
\begin{align*}
  & \sum_{t=1}^n h_{t,n}^r \psi_{t+j} \expe^{\rmi t \lambda} = \sum_{p=0}^r
  (-1)^p \left(^r_p\right) \sum_{t=1}^n \psi_{t+j} \expe^{\rmi t
  (\lambda+\lambda_{p})},\\ & \sum_{t=1}^n \psi_{t+j} \expe^{\rmi t \lambda_k}
  = \sum_{t=1}^{n-1} \left \{ \left( \sum_{u=1}^t \expe^{\rmi u \lambda_k}
  \right) (\psi_{t+j} - \psi_{t+j+1}) + \left( \sum_{u=1}^n \expe^{\rmi u
  \lambda_k} \right) \; \psi_{n+j} \right \}.
\end{align*}
For all $y\in(0,\pi)$ and all $\ell\in\Nset^*$,
$ \left | \sum_{u=1}^\ell \expe^{ \rmi u y} \right| \leq 2/y
$. The proof follows from condition~(\ref{eq:cond2}).
\end{proof}
Proceed now with the proof of Theorem~\ref{theo:edgdftlong}.
If $|j|\geq n$, then $((1+|j|)/n)^{d-1}\leq 1$.  Hence by Lemma~\ref{lem:spectral}, for some constant $C$
which depends only on $\beta$, $\delta$, $\Delta$, $\vartheta$, $\mu$, $r$ and the
distribution of $Z_1$, $$\forall \ j,n,\bk, \ \   M_{n,j} \eqdef
Cn^{-1/2}\left( 1 \wedge ((1+|j|)/n)^{\delta-1}\right) \geq \|\bU_{n,j}(\bk)\|.$$ Note that
$$M_n \eqdef \sup_{j \in \Zset} M_{n,j} = C n^{-1/2}.
$$ Then \eqref{eq:edg:majorizingsequence} and \eqref{eq:edg:lindeberg} hold
uniformly in $\bk$.  By Lemma~\ref{lem:algebrique2}, Eq.~(\ref{eq:borneinfcov})
or (\ref{eq:borneinfsinglefreq}), we have
$$\sum_j \|\bU_{n,j}(\bk)\|^2 = \text{trace}[\bV_n(\bk)] \geq v_*>0.$$ Finally,
define for any $\gamma \geq 1$ the set $J_n =\{j \in \Zset, |j| \leq \gamma
n\}$. Then $\textrm{card}(J_n) \leq c_0 M_n^{-2}$ and
\[
\frac{\sum_{j \in \Zset \setminus J_n} \|\bU_{n,j}(\bk)\|^2}{\sum_{j \in \Zset}
  \|\bU_{n,j}(\bk)\|^2}
\leq
\frac{\sum_{j \in \Zset \setminus J_n} M_{n,j}^2}{\sum_{j \in \Zset}
  \|\bU_{n,j}(\bk)\|^2} \leq C (v_*)^{-1} n^{1 - 2\delta} \sum_{|j| \geq \gamma n}
  j^{2\delta-2} \leq C (v_*)^{-1}  \gamma^{2\delta-1}.
\]
Choosing $\gamma$ large enough yields~(\ref{eq:edg:smallness}) uniformly.

\section{Proofs of Corollaries~\ref{coro:boundmomshort1},
  \ref{coro:boundmomshort2}, \ref{coro:boundmomlong1} and
  ~\ref{coro:boundmomlong2}}
\label{sec:proofs-coroll}

\begin{proof}[{Proof of Corollary~\ref{coro:boundmomshort1}}]
By the triangle inequality, the LHS of inequality~(\ref{eq:boundmomshort1}) is bounded by
\begin{equation*}
  \left | \esp \left[ g(\bS_n(\bk)) \right] - \int_{\Rset^{2u}} g(\bx)
    \varphi_{\bV_n(\bk)}(\bx) \d \bx \right| +   \left | \int_{\Rset^{2u}} g(\bx)
    \left\{\varphi_{\bV_n(\bk)}(\bx) - \varphi_{\bV(\bk)}(\bx)\right\} \d \bx \right|.
\end{equation*}
By Corollary~\ref{coro:boundmom1} with $s=3$, the first term of the previous display is
bounded by $C n^{-1/2} N_3(g)$.   For $A$ a matrix, denote
$\rho(A)$ its spectral radius. Denote $\bI_{a}$ the $a$-dimensional identity
matrix.
To bound the second term, note that
$\rho(\bV_n(\bk) - \bV(\bk)) \leq C(\alpha,\beta)n^{-1}$
by~(\ref{eq:boundnormwnk}) and that $\tau(g,\bV(\bk))\geq 1$ by definition, then apply
the following lemma which is an easy adaptation of
\citet*[][Theorem~2.1]{soulier:2001}.
\end{proof}
\begin{lem}
\label{lem:soulier}
Let $\Gamma$ be a $u$-dimensional positive matrix.  There exists $\epsilon > 0$
and a constant $C$ such that, for all symmetric positive matrix $\Gamma'$ verifying
$\rho(\Gamma'^{-1}-\Gamma^{-1}) < \epsilon$, and for all measurable functions $g$ on
$\Rset^u$ satisfying $\| g \|^2_\Gamma < \infty$, we have
$$
\left| \int_{\Rset^u} g(\bx) \left\{ \varphi_{\Gamma'}( \bx) -
    \varphi_\Gamma(\bx) \right\} d \bx \right | \leq C \rho^{\tau(g,\Gamma)/2}
( \Gamma' - \Gamma) \; \| g \|_{\Gamma} \; .
$$
\end{lem}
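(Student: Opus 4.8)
The plan is to turn the left-hand side into a scalar product in $L^2(\varphi_\Gamma)$ and then invoke the Hermite rank, exactly as in \citet*[Theorem~2.1]{soulier:2001}. Introduce
\[
h_{\Gamma'}(\bx) \eqdef \frac{\varphi_{\Gamma'}(\bx)}{\varphi_\Gamma(\bx)}-1 = \frac{(\det\Gamma)^{1/2}}{(\det\Gamma')^{1/2}}\,\exp\!\Bigl(-\tfrac12\,\bx'(\Gamma'^{-1}-\Gamma^{-1})\bx\Bigr)-1 ,
\]
so that $\int_{\Rset^u}g(\bx)\{\varphi_{\Gamma'}(\bx)-\varphi_\Gamma(\bx)\}\d\bx=\esp\bigl[g(\xi_\Gamma)\,h_{\Gamma'}(\xi_\Gamma)\bigr]$. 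First I would pick $\epsilon>0$, depending only on $\Gamma$, small enough that $\rho(\Gamma'^{-1}-\Gamma^{-1})<\epsilon$ forces $2\Gamma'^{-1}-\Gamma^{-1}$ to be positive definite. This makes $\varphi_{\Gamma'}/\varphi_\Gamma$ square integrable against $\varphi_\Gamma$, hence $h_{\Gamma'}\in L^2(\varphi_\Gamma)$; both $g$ and $h_{\Gamma'}$ then admit expansions in the multivariate Hermite polynomials $\{H_\bnu\}_{\bnu\in\Nset^u}$, which form an orthogonal basis of $L^2(\varphi_\Gamma)$.

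Next I would exploit the Hermite rank. Since $\esp[h_{\Gamma'}(\xi_\Gamma)]=\int\varphi_{\Gamma'}-\int\varphi_\Gamma=0$ and $h_{\Gamma'}$ is an even function of $\bx$, the function $h_{\Gamma'}$ lies in the closed span of $\{H_\bnu:|\bnu|\ge2\}$; and, by definition of $\tau=\tau(g,\Gamma)$, the function $g-\esp[g(\xi_\Gamma)]$ is orthogonal in $L^2(\varphi_\Gamma)$ to every polynomial of degree $<\tau$, hence lies in the closed span of $\{H_\bnu:|\bnu|\ge\tau\}$. Writing $\Pi_{\ge\tau}$ for the orthogonal projection onto $\overline{\mathrm{span}}\{H_\bnu:|\bnu|\ge\tau\}$, it follows that $\esp[g(\xi_\Gamma)h_{\Gamma'}(\xi_\Gamma)]=\esp\bigl[(g-\esp g)(\xi_\Gamma)\,(\Pi_{\ge\tau}h_{\Gamma'})(\xi_\Gamma)\bigr]$, and Cauchy--Schwarz together with $\|g-\esp[g(\xi_\Gamma)]\|_\Gamma\le\|g\|_\Gamma$ gives
\[
\Bigl|\int_{\Rset^u}g(\bx)\{\varphi_{\Gamma'}(\bx)-\varphi_\Gamma(\bx)\}\d\bx\Bigr|\le\|g\|_\Gamma\;\bigl\|\Pi_{\ge\tau}h_{\Gamma'}\bigr\|_\Gamma .
\]

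It then remains to prove $\|\Pi_{\ge\tau}h_{\Gamma'}\|_\Gamma\le C\,\rho^{\tau/2}(\Gamma'-\Gamma)$. Setting $A=\Gamma'^{-1}-\Gamma^{-1}$ and expanding the exponential, one gets $h_{\Gamma'}=\sum_{k\ge1}Q_k$, where $Q_k$ is a polynomial homogeneous of degree $2k$ in $\bx$ whose coefficients are $O(\rho(A)^k)$, the constant depending only on $\Gamma$ and $k$ (one uses $|A_{ij}|\le\rho(A)$ and that $(\det\Gamma')^{-1/2}$ is bounded for $\rho(A)<\epsilon$). A homogeneous polynomial of degree $2k$ has Hermite components only of orders $\le2k$, so $\Pi_{\ge\tau}Q_k=0$ whenever $2k<\tau$, whence $\Pi_{\ge\tau}h_{\Gamma'}=\sum_{k\ge\lceil\tau/2\rceil}\Pi_{\ge\tau}Q_k$; bounding $\|Q_k\|_\Gamma$ by Gaussian moment estimates and summing the resulting geometric tail (which is why $\epsilon$ must be small) yields $\|\Pi_{\ge\tau}h_{\Gamma'}\|_\Gamma\le C\,\rho(A)^{\lceil\tau/2\rceil}\le C\,\rho(A)^{\tau/2}$ once $\epsilon\le1$. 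Finally $A=-\Gamma'^{-1}(\Gamma'-\Gamma)\Gamma^{-1}$ and, when $\rho(A)<\epsilon$, $\Gamma'^{-1}$ is controlled in terms of $\Gamma$, so $\rho(A)\le C\,\rho(\Gamma'-\Gamma)$, which closes the argument. The main obstacle is precisely this last step: controlling the Hermite tail $\Pi_{\ge\tau}h_{\Gamma'}$ with a constant that is \emph{uniform over all admissible $\Gamma'$} (and tracking how the Hermite rank enters the exponent); keeping $2\Gamma'^{-1}-\Gamma^{-1}$ positive definite and the defining series of $h_{\Gamma'}$ geometrically convergent in $L^2(\varphi_\Gamma)$ is exactly what the choice of $\epsilon$ secures.
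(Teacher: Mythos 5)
The paper does not actually prove this lemma: it is stated as ``an easy adaptation of Theorem~2.1 of \cite{soulier:2001}'' and used as a black box, so there is no internal proof to compare against. Your argument is, in substance, a correct self-contained reconstruction of that cited result, and it follows the standard route: rewrite the difference of integrals as $\esp[g(\xi_\Gamma)h_{\Gamma'}(\xi_\Gamma)]$ with $h_{\Gamma'}=\varphi_{\Gamma'}/\varphi_\Gamma-1$, use completeness of the Hermite system in $L^2(\varphi_\Gamma)$ plus the definition of the rank to replace $h_{\Gamma'}$ by $\Pi_{\ge\tau}h_{\Gamma'}$, and then control the Hermite tail of $h_{\Gamma'}$ by expanding the exponential; the choice of $\epsilon$ (positivity of $2\Gamma'^{-1}-\Gamma^{-1}$, geometric convergence of the series in $L^2(\varphi_\Gamma)$) is handled correctly, as is the identification $\rho(\Gamma'^{-1}-\Gamma^{-1})\le C\rho(\Gamma'-\Gamma)$. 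Two small points deserve attention. First, $h_{\Gamma'}$ is not exactly $\sum_{k\ge1}Q_k$: the expansion also produces the constant $(\det\Gamma/\det\Gamma')^{1/2}-1$; this is harmless because $\tau(g,\Gamma)\ge1$ so $\Pi_{\ge\tau}$ annihilates it, but it should be said. Second, after bounding $\|\Pi_{\ge\tau}h_{\Gamma'}\|_\Gamma\le C_0\,(C_1\rho(\Gamma'-\Gamma))^{\tau/2}$, the passage to the stated form $C\,\rho^{\tau(g,\Gamma)/2}(\Gamma'-\Gamma)\,\|g\|_\Gamma$ inflates the constant by $C_1^{\tau/2}$, i.e. your $C$ depends on the Hermite rank of $g$ and is not literally uniform over all $g$ as the statement (and the corollaries that invoke it) would suggest; either accept this dependence, which is innocuous in the paper's applications where the relevant functions have rank $1$ or $2$, or note explicitly that the bound is proved for ranks bounded by a fixed integer. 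With these caveats your proof buys something the paper does not provide, namely an explicit argument in place of the citation.
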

\begin{proof}[{Proof of Corollary~\ref{coro:boundmomshort2}}]
  The LHS of (\ref{eq:boundmomshort2}) is bounded by $  A_1 + A_2 + A_3 + A_4$ with
\begin{align*}
  A_1 &= \left | \esp \left[ g(\bS_n(\bk)) \right] - \int_{\Rset^{2u}} g(\bx)
    \sum_{r=0}^{s-3} P_r(\bx,\bV_n(\bk),\{\chi_{n,\bnu}(\bk)\}) \}
    \d \bx \right| \; , \\
  A_2 &= \left | \int_{\Rset^{2u}} g(\bx) \left\{\varphi_{\bV_n(\bk)}(\bx) -
      \varphi_{\bI_{2u}/2}(\bx)\right\} \d \bx \right| \; ,  \\
  A_3 &= \left| \int_{\Rset^{2u}} g(\bx)
    P_1(\bx,\bV_n(\bk),\{\chi_{n,\bnu}(\bk)\}) \d \bx \right| \; ,  \\
  A_4 &= \left|\int_{\Rset^{2u}} g(\bx) \sum_{r=2}^{s-3}
    P_r(\bx,\bV_n(\bk),\{\chi_{n,\bnu}(\bk)\}) \} \d \bx \right| \; , 
\end{align*} $A_4 = 0$ if $s=4$.
Using~(\ref{eq:propG}), we get $\tau(g,\bI_{2u}/2) = 2$. It follows, as in the
proof of Corollary~\ref{coro:boundmomshort1} that $A_2$ is bounded by
$Cn^{-1}\|g\|_{\bV(\bk)}$, whereas $A_1$ is bounded by $Cn^{-(s-2)/2}N_s(g)$.
Write shortly  $P_r(\bx,\bV_n(\bk),\{\chi_{n,\bnu}(\bk)\}) =
R_r(\bx)\varphi_{\bV_n(\bk)}$,  where $R_r$ is a polynomial of order $r+2$ (the dependence w.r.t $\bV_n(\bk)$ and
 $\{\chi_{n,\bnu}(\bk)\}$ is ommited in this notation).
Note also that
\begin{multline}
  |\chi_{n,\bnu}(\bk)| \leq |\kappa_{|\bnu|}| \sum_{j \in \Zset}
  \|\bU_{n,j}(\bk)\|^{|\bnu|} \leq |\kappa_{|\bnu|}| M_n^{|\bnu|-2}(\sum_{j \in
    \Zset} \|\bU_{n,j}(\bk)\|^2) \\ \leq |\kappa_{|\bnu|}| M_n^{|\bnu|-2}
  \text{trace}[\bV_{n}(\bk)] \leq C|\kappa_{|\bnu|}| M_n^{|\bnu|-2}
  \label{eq:bornecum}
\end{multline}
where $M_n \leq C(\alpha,\beta) n^{-1/2}$.  Then, the coefficients of $R_r$ are
$O(n^{-(r/2)})$ uniformly in $\bk$ and $\psi$ since they involve
$\chi_{n,\bnu}(\bk)$'s with $|\bnu| = r$ and elements of $\bV_n^{-1}(\bk)$
\citep[for details, see][]{bhattacharya:rao:1976}.  Let $\bF_{n}(\bk) \eqdef
(\bV_{n}^{-1}(\bk) - \bV^{-1}(\bk))/2$ and write
 \begin{equation}
   \label{eq:expressiontermr}
   \int g(\bx) \varphi_{\bV_n(\bk)}(\bx) R_r(\bx) \d \bx = \left|\frac{\det
       \bV(\bk)}{\det \bV_{n}(\bk)}\right|^{1/2} \int g(\bx) R_r(\bx)
   \exp\{-\bx'\bF_n(\bk)\bx\}\varphi_{\bV(k)}(\bx) \d\bx
 \end{equation}
 By (\ref{eq:boundnormwnk}), $\|\bF_n(\bk)\|_1 \leq Cn^{-1}$ and $
 |\det(\bV_n(\bk))^{-1/2} - \det(\bI_{2u}/2)^{-1/2}| \leq C n^{-1}$ uniformly
 so that $A_4 \leq C n^{-1} \| (1+\|\bx\|^s)g(\bx)\|_{2\bI_{2u}/3}$.  We can
 derive this way that $A_3 \leq Cn^{-1/2} $ which is not enough. Improving
 this bound requires some care and uses the symmetries of $g$.  Actually, $R_1$
 is a sum of polynomials which are odd with respect to one or three components.
 Write
 \begin{align}
   \label{eq:taylor1}
   |\exp\{-\bx'\bF_n(\bk)\bx\} - 1 + \bx'\bF_n(\bk)\bx| \leq C n^{-2} \|\bx\|^4
   \exp \left\{ C n^{-1}\|\bx\|^2 \right\}
 \end{align}
 and notice that $ \{1 - \bx'\bF_n(\bk)\bx\} R_1(\bx)$ is a sum of polynomials
 of the form $\prod_i r_i(x_{2i-1},x_{2i})$, each of them being odd with
 respect to at least one variable. Consider a typical term odd with respect to
 $x_1$, say.   Using~(\ref{eq:propG})
\begin{multline*}
  \int_{\Rset^{2u}} g(\bx)\prod_i r_i(x_{2i-1},x_{2i})\varphi_{\bI_{2u}/2}(\bx)
  \d\bx = \int_{\Rset^2} g_1(x_1,x_2)r_1(x_1,x_2)\varphi_{\bI_2/2}(x_1,x_2)\d
  x_1 \d x_2 \\ \times \int_{\Rset^{2u-2}} \prod_{i>1}g_i(x_{2i-1},x_{2i})
  r_i(x_{2i-1},x_{2i}) \varphi_{\bI_{2u-2}/2}(\bx)\d x_3\cdots\d x_{2u} = 0,
\end{multline*}
since the first integral vanishes. Hence, $ \int_{\Rset^4} h(\bx) R_1(\bx)
(\bx'\bF_n(\bk)\bx) \varphi_{\bI_4}(\bx) \d\bx = 0.$ Gathering
(\ref{eq:bornecum}), (\ref{eq:expressiontermr}) and~(\ref{eq:taylor1}),  $A_3 \leq
Cn^{-2}$.
\end{proof}

\begin{proof}[Proofs of Corollaries~\ref{coro:boundmomlong1} and~\ref{coro:boundmomlong2}]
As those corollaries are the counterparts of
Corollaries~\ref{coro:boundmomshort1} and~\ref{coro:boundmomshort2} in a long
memory context,   we only gives the necessary adaptations from the preceding proofs.
  From Lemma~\ref{lem:covdft}, $\rho(\bV_n(k) - \bV(k)) \leq C p(k,k,n,\beta)$,
  $\|\bF_n(\bk)\|_1 \leq Cp(k,j,n,\beta)$ and
\begin{align*}
  |\det(\bV_n(\bk))^{-1/2} - \det(\bV(\bk))^{-1/2}| \leq C p(k,j,n,\beta).
\end{align*}
The LHS of (\ref{eq:taylor1}) is now bounded by
$p^2(k,j,n,\beta)  \|\bx\|^4 \exp \left\{ C p(k,j,n,\beta) \|\bx\|^2 \right\}$.
The term $A_3$ is then bounded by 
\begin{equation*}
  C n^{-1/2}p^2(k,j,n,\beta) \int_{\Rset^4} \|\bx\|^5
h(\bx)\exp\{-\|\bx\|^2(1+C p(k,j,n,\beta))\} \d\bx.
\end{equation*}
If $m=o(n)$ and $K_1 > 2C$, then for large enough $n$ and $K_1\leq k < j - r
\leq m - r$, the integral is uniformly bounded. Thus $ A_3 \leq C
n^{-1/2} p^2(k,j,n,\beta) $ whereas $A_1 \leq C  p^2(k,j,n,\beta) $.
\end{proof}

\section{Proof of Theorem~\ref{theo:rateoptimality}}
In the sequel, $C$ denotes a constant which depends only on $\beta$, $\delta$,
$\vartheta$, $\mu$ and the distribution of $Z_1$ and whose value may change
upon each appearance.  Note first that $ |\nu_k| = O(\log(k)), \ \ s_m^2/m \to
C >0$ (see for instance \citet*{robinson:1995a}, or
\citet*{hurvich:deo:brodsky:1998}).  Define $f^*(\lambda) = | 1 - \emi{\lambda}
|^{-2d}f(\lambda)$ and $L(\lambda) = \log(f^*(\lambda)/f^*(0))$.  Since
$\psi\in\mcf(\vartheta,\beta,\delta,\Delta,\mu)$, there exists a constant $C$ such
that
\begin{equation}
  \label{eq:lbeta}
\forall k \in \{1,\cdots,m\}, \ |L(\lambda_k)| \leq C |\lambda_k|^\beta.
\end{equation}
Let $\bar\eta$ denote $\esp(\log \|\bY\|^2)$ where $\bY$ is a centered Gaussian random
vector with covariance matrix $\bI_2/2$. Define $\eta_k =
\log(I_{k}/f(\lambda_{k})) - \bar\eta$, $1 \leq k \leq m$.  With these notations
and since $\sum_{k=1}^m \nu_k=0$, (\ref{eq:GPHexplicit}) yields
\begin{equation}
  \label{eq:decomphatd}
\hat d_m = d + s_m^{-2}\sum_{k=1}^m \nu_k \eta_k + s_m^{-2}\sum_{k=1}^m
\nu_k L(\lambda_{k}) =: d + W_m + b_m.
\end{equation}
The mean-square error of the GPH writes $\esp((\hat d_m - d)^2) = \esp W_m^2 +
2b_m\esp W_m + b_m^2$. Applying \eq{lbeta} and the Cauchy-Schwartz inequality,
\begin{equation}\label{eq:biais}
|b_m| \leq C s_m^{-2} \sum_{k=1}^m |\nu_k| \lambda_k^\beta \leq C (m/n)^\beta.
\end{equation}
Thus, to prove Theorem \ref{theo:rateoptimality}, we only need to show that
$\esp[W_m^2] \leq C m^{-1}$.  We now compute $\esp[W_m^2]$.  Let $\ell=\ell(m)$
be a non decreasing sequence of integers such that $1 \leq \ell \leq m$ and
define $W_{1,m} = s_m^{-2}\sum_{k=1}^\ell \nu_k \eta_k$ and
$W_{2,m}=W_m-W_{1,m}$.  We first give a bound for $\esp[W_{1,m}^2]$.  Note that
\begin{equation}
\label{eq:bornewm1}
\esp[W_{1,m}^2] \leq \ell s_m^{-4}\sum_{k=1}^\ell \nu_k^2 \esp[\eta_k^2].
\end{equation}
For $\bx\in\Rset^2$, define $g(\bx) = \log(\|\bx\|^2) -\bar\eta$. Then $\eta_k =
g(\bS_{n,k})$ and $N_3(g^2) < \infty$. For $(x_1,\dots,x_4) \in \Rset^2$,
define $h(x_1,\dots,x_4) = g(x_1,x_2)g(x_3,x_4)$. Then $\eta_k\eta_j =
h(\bS_{n,(k,j)})$, $h$ has property~(\ref{eq:propG}) and
\begin{equation*}
  N_{5}(h) = \int_{\Rset^4} \frac{g((x_1,x_2))g((x_3,x_4))}{1+\|\bx\|^5} \leq 4
  (N_{5/2}(g))^2 \d\bx < \infty
\end{equation*}
where we have used $4(1+(a^2+b^2)^{s/2}) \geq (1+|a|^{s/2})
(1+|b|^{s/2})$. Note that $N_4(h)=N_2(g) = +\infty$, which motivates the
expansion up to order $s=5$. Let $\sigma^2 \eqdef \var(\log
\|\bY\|^2)=\pi^2/6$. Applying Corollaries~\ref{coro:boundmomlong1}
and~\ref{coro:boundmomlong2} respectively to the functions $g,h$, we get for
some integer $l_0$ and any $k,j$ such that $l_0 \leq k < j \leq m$,
\begin{align}
  | \esp[\eta_k^2] - \sigma^2 | &\leq C(\beta,\delta,\vartheta,\mu)
  \left\{ k^{-1} + (k/n)^\beta + n^{-1/2} \right\}   \label{eq:varlogper}\\
    \label{eq:covlogper}
   \left| \esp[\eta_k \eta_j] \right| &\leq C(\beta,\delta,\vartheta,\mu)
   \left\{ k^{-2} + (j/n)^{2\beta} + n^{-1} \right\}.
\end{align}
(\ref{eq:bornewm1}) and~(\ref{eq:varlogper}) yield $\esp[W_{1,m}^2] \leq C
\ell^2 m^{-2}$.  We now bound $\esp[W_{2,m}^2]$:
\begin{align*}
  \esp[W_{2,m}^2] & = s_m^{-4}\sum_{k=\ell+1}^m \nu_k^2 \esp[\eta_k^2] + 2
  s_m^{-4} \sum_{\ell < k < j \leq m} \nu_k \nu_j \esp[\eta_k\eta_j].
\end{align*}
Using \eq{varlogper} and
\eq{covlogper}, we obtain
\begin{align}
  \left|\esp[W_{2,m}^2] - s_m^{-2}\sigma^2\right| \leq& C(\beta,\delta,\vartheta,\mu)
  s_m^{-4} \sum_{k=\ell+1}^m \nu_k^2 \left(k^{-1} + (k/n)^\beta + n^{-1/2}
  \right) \nonumber \\ & + C(\beta,\delta,\vartheta,\mu) s_m^{-4}\sum_{\ell < k
    < j \leq m} \nu_k \nu_j \left(k^{-2} + (j/n)^{2\beta} + n^{-1} \right)
  \nonumber \\ =& C(\beta,\delta,\vartheta,\mu) s_m^{-2} \left\{1 + O\left(
      \ell^{-1/2} + m^{1/2}l^{-3/2} + m^{2\beta+1}n^{-2\beta} + m/n
    \right)\right\}
  \label{eq:boundW2m}.
\end{align}
Choosing $\ell\leq m$ such that $\ell^2 = o(m)$ and $m = o(\ell^3)$ (for
instance $\ell = [m^\eta]$ with $1/3 < \eta < 1/2$) yields $\esp[W_m^2] =
O(m^{-1})$.  This bound and \eq{biais} conclude the proof of Theorem
\ref{theo:rateoptimality}.

\bibliographystyle{plainnat}
\bibliography{motherofallbibs,biblio}

\end{document}